\setlist[enumerate,1]{label = \normalfont(\arabic*), ref = (\arabic*)}
\newtheorem{theorem}{Theorem}[section]
\newtheorem{lemma}[theorem]{Lemma}
\newtheorem{proposition}[theorem]{Proposition}
\newtheorem{corollary}[theorem]{Corollary}
\newenvironment{assumption}[1]
{\taggedtheoremx}
{\endtaggedtheoremx}
\theoremstyle{definition}
\newtheorem{definition}[theorem]{Definition}
\newtheorem{remark}[theorem]{Remark}
\newtheorem{convention}[theorem]{Convention}
\newcommand{\R}{\mathbb{R}}
\newcommand{\C}{\mathbb{C}}
\newcommand{\N}{\mathbb{N}}
\renewcommand{\L}{\mathrm{L}}
\newcommand{\W}{\mathrm{W}}
\newcommand{\IW}{\mathbb{W}}
\newcommand{\Cont}{\mathrm{C}}
\newcommand{\e}{\mathrm{e}}
\renewcommand{\d}{\,\mathrm{d}}
\newcommand{\ddt}{\,\frac{\mathrm{d}t}{t}}
\newcommand{\rad}{\mathrm{r}}
\newcommand{\eps}{\varepsilon}
\newcommand{\B}{\mathrm{B}}
\newcommand{\Q}{\mathrm{Q}}
\renewcommand{\H}{\mathrm{H}}
\newcommand{\cH}{\mathcal{H}}
\newcommand{\A}{\mathcal{A}}
\newcommand{\cl}[1]{\overline{#1}}
\renewcommand\Re{\operatorname{Re}}
\renewcommand\Im{\operatorname{Im}}
\DeclareMathOperator{\bd}{\partial \!}
\DeclareMathOperator{\supp}{supp}
\DeclareMathOperator{\dist}{d}
\DeclareMathOperator{\diam}{diam}
\DeclareMathOperator{\dom}{D}
\DeclareMathOperator{\Div}{div}
\newcommand{\Sec}{\mathrm{S}}
\newcommand{\cE}{\mathcal{E}}
\newcommand{\bD}{{\boldsymbol{D}}}
\newcommand{\bO}{{\boldsymbol{O}}}
\newcommand{\bL}{\boldsymbol{L}}
\newcommand{\bTheta}{\boldsymbol{\Theta}}
\newcommand{\bP}{\boldsymbol{P}}
\newcommand{\bQ}{\boldsymbol{Q}}
\newcommand{\bcL}{\boldsymbol{\mathcal{L}}}
\newcommand{\bgamma}{\boldsymbol{\gamma}}
\newcommand{\bB}{\boldsymbol{B}}
\newcommand{\Res}{\mathcal{R}}
\DeclareRobustCommand{\Ar}[1]{{\overrightarrow{#1}}}
\newcommand{\bAr}[1]{{\overrightarrow{\boldsymbol{#1}}}}
\DeclareMathOperator{\ran}{ran}
\DeclareMathOperator{\loc}{loc}
\newcommand{\one}{\mathbf{1}}
\newcommand{\les}{\lesssim}
\newcommand{\norm}[1]{\left\lVert#1\right\rVert}
\newcommand{\abs}[1]{\left\lvert#1\right\rvert}
\newcommand{\normm}[1]{\lVert#1\rVert}
\newcommand{\abss}[1]{\lvert#1\rvert}
\newcommand{\scal}[1]{\left\langle#1\right\rangle}
\providecommand{\leftsquigarrow}{%
  \mathrel{\mathpalette\reflect@squig\relax}%
}
\newcommand{\reflect@squig}[2]{%
  \reflectbox{$\m@th#1\rightsquigarrow$}%
}
\newcommand{\embeds}{\hookrightarrow}
\title[A second order approach to the Kato square root problem on open sets]{A second order approach to \\ the Kato square root problem on open sets}
\author[S. Bechtel]{Sebastian Bechtel}
\address{Université Paris-Saclay, CNRS \\ Laboratoire de Mathématiques d’Orsay \\ 91405 Orsay \\ France}
\email{sebastian.bechtel@universite-paris-saclay.fr}
\author[C. Hutcheson]{Cody Hutcheson}
\address{Department of Mathematics, University of Alabama, Tuscaloosa, AL, 35487, USA}
\email{cmhutcheson@crimson.ua.edu}
\author[T. Schmatzler]{Timotheus Schmatzler}
\address{Matematiska institutionen, Stockholms universitet, 106 91 Stockholm, Sweden}
\email{timotheus.schmatzler@math.su.se}
\author[T. Tasci]{Tolgahan Tasci}
\address{IMACM, Bergische Universität Wuppertal, Gaußstraße 20, Wuppertal, D-42119, Germany}
\email{tasci@uni-wuppertal.de}
\author[M. Wittig]{Mattes Wittig}
\address{Maths Institute, TU Hamburg (TUHH), D-21073 Hamburg, Germany}
\email{mattes.wittig@tuhh.de}
\subjclass[2010]{Primary: 47A60, 35J47. Secondary: 26A33.}
\date{\today}
\thanks{The first-named author was supported by the Alexander von Humboldt foundation by a Feodor Lynen grant. The third-named author gratefully acknowledges financial support by the grant no. 2022-03342 of the Swedish Research Council (VR). This project has received funding from the European Union’s Horizon 2020 research and innovation programme under the Marie Skłodowska-Curie grant agreement No 101034255 \euflag{}}
\dedicatory{}
\keywords{Kato square root problem, second-order approach, T(b)-argument, locally
uniform domains, Ahlfors--David regular sets}
\begin{document}
	\begin{abstract}

    We obtain the Kato square root property for coupled second-order elliptic systems in divergence form subject to mixed boundary conditions on an open and possibly unbounded set in $\mathbb{R}^n$ under two simple geometric conditions: The Dirichlet boundary parts for the respective components are Ahlfors--David regular and a quantitative connectivity property in the spirit of locally uniform domains holds near the remaining Neumann boundary parts. In contrast to earlier work, our proof is not based on the first-order approach due to Axelsson--Keith--McIntosh but uses a second-order approach in the spirit of the original solution to the Kato square root problem on Euclidean space. This way, the proof becomes substantially shorter and technically less demanding.

	\end{abstract}
	\maketitle

	\section{Introduction}
	\label{sec:intro}

    Let $L$ be a coupled second-order elliptic system in divergence form on an open set $O \subseteq \R^n$, $n\geq 2$, with bounded, measurable, elliptic coefficients in $\mathcal{L}(\C^m)$, formally given by
    \begin{align}
        Lu = -\sum_{i,j=1}^n \partial_i (a_{ij} \partial_j u) - \sum_{i=1}^n \partial_i (b_i u) + \sum_{j=1}^n c_i \partial_i u + d u.
    \end{align}
    The number $m$ is the system size. Precise assumptions on the coefficients are presented in Assumption~\ref{ass: coeffs of L}. For each component $i = 1, \dots, m$, let $D_i \subseteq \partial O$ be a closed, possibly empty, subset of the boundary. Group them in the array $\Ar{D} = (D_i)_{i=1}^m$. As an abuse of notation, we also identify $\Ar{D}$ with a subset of $(\partial O)^m$. Then, we complement the system $L$ with Dirichlet boundary conditions on $\Ar{D}$. On the remaining (Neumann) boundary $(\partial O)^m \setminus \Ar{D}$, natural boundary conditions are imposed.

    To make the definition precise, define $\W^{1,2}_{D_i}(O)$ as the closure in $\W^{1,2}(O)$ of test functions vanishing in a neighborhood of $D_i$ (see Definition~\ref{Def: W12D}), and put $\IW^{1,2}_\Ar{D}(O) = \otimes_{i=1}^m \W^{1,2}_{D_i}(O)$.
    The system $L$ can be properly defined using a sesquilinear form $$\mathfrak{a} \colon \IW^{1,2}_\Ar{D}(O) \times \IW^{1,2}_\Ar{D}(O) \to \C$$ using Kato's form method. The form $\mathfrak{a}$ is obtained from the (formal) definition of $L$ above by virtue of integration by parts, see~\eqref{eq: def of sesqform a}.

    The system $L$ is invertible and $m$-accretive in $\L^2(O)^m$. In particular, it possesses a (unique) $m$-accretive square root $\sqrt{L}$ with the property $(\sqrt{L})^2 = L$. The question whether or not the identity $\dom(\sqrt{L}) = \IW^{1,2}_\Ar{D}(O)$ holds has become famous as the \emph{Kato square root problem}. Let us mention that, if the coefficients of the system are self-adjoint, then this property is essentially trivial. However, in the non-symmetric case, even the case $O = \R^n$ and $m=1$ has resisted all attempts of a resolution for more than 40 years. Eventually, it was the breakthrough result due to Auscher, Hofmann, Lacey, McIntosh, and Tchamitchian~\cite{Kato-Square-Root-Proof} that answered Kato's question in the affirmative in the Euclidean case.

    Given that $L$ is an elliptic operator of second order, it might not seem too surprising that the proof in~\cite{Kato-Square-Root-Proof} relies on harmonic analysis techniques for second-order elliptic operators. Now, however, comes the plot twist: few years after the original solution of the Kato square root problem, Axelsson, Keith, and McIntosh proposed in their seminal paper~\cite{AKM-QuadraticEstimates} a new framework, called the \emph{first-order approach}, in which the Kato square root problem (and many other problems from the so-called Calderón program) could be casted by reformulating them in terms of certain first-order differential operators (the so-called \emph{perturbed Dirac operators}). Moreover, using this approach, they were also capable of resolving first cases of the Kato square root problem when the operator $L$ is subject to mixed boundary conditions~\cite{AKM}.
    Eventually, using the first-order approach became the gold standard when treating the Kato square root problem on manifolds~\cite{Morris} or rough open sets subject to mixed boundary conditions~\cite{BEH-Kato,Darmstadt-KatoMixedBoundary,Laplace-Extrapolation}.

    Applications on sets with rough geometry come from various fields including, with exemplary references, elliptic and parabolic regularity~\cite{RobertJDE, Bonifacius-Neitzel}, Lions' non-autonomous maximal regularity problem~\cite{Achache-Ouhabaz,Fackler} and boundary value problems~\cite{AAM-ArkMath, AE-mixed}.

    The reader might wonder why the second-order proof on $\R^n$ does \emph{not} generalize to the setting on an open set $O$. To give an explanation, we have to dive a bit into the structure of the proof. What is common in both the second and first-order approaches is that the original problem is reduced to the validity of some quadratic estimate. In our case, the quadratic estimate reads
    \begin{align}
    \label{eq:intro:QE}
    \tag{$\heartsuit$}
        \int_0^\infty \norm{ \Theta_t \begin{bmatrix}
            u \\ \nabla u
        \end{bmatrix}}^2_2
        \ddt \les \norm{ u }^2_{\IW^{1,2}(O)} \, ,
        \qquad u \in \IW^{1,2}_\Ar{D} (O) \, .
    \end{align}
    The operator $\Theta_t$ transforms a \enquote{vector-field} in $\C^{(n+1)m}$ into a vector in $\C^m$;
    see Section~\ref{sec:reduction} for a precise definition of $\Theta_t$ and more details. To obtain the quadratic estimate~\eqref{eq:intro:QE}, the task is further reduced to a Carleson measure estimate. The latter is the heart of the matter and relies on involved local $T(b)$--techniques, see Section~\ref{sec:carleson}. The bottleneck for the treatment of open sets is, however, the reduction to a Carleson measure estimate, presented in Section~\ref{sec:principal part approximation}.
    Indeed, to obtain the desired estimates, suitable smoothing operators in the spirit of Littlewood--Paley theory have to be introduced. In the original proof, these smoothing operators are convolution operators with a nice kernel which are applied component-wise to the vector field $\nabla u$, see~\cite[Lem.~4.4]{Kato-Square-Root-Proof}. In the same lemma, a crucial argument is performed: using the Fourier transform, these component-wise smoothing operators commute nevertheless with any partial derivatives. Also in modern treatments of the Kato square root problem on Euclidean space, the same argument is crucially invoked~\cite[Rem.~13.8]{ISEM}.
    Such a reasoning is, of course, impossible in our setting, nourishing the need for the first-order approach.

    The objective of the current manuscript is hence the following: we aim to reprove the state-of-the-art result~\cite{BEH-Kato} on the Kato square root problem subject to mixed boundary conditions by virtue of a second-order approach, that is, with a proof in the spirit of~\cite{Kato-Square-Root-Proof,ISEM}. Moreover, in~\cite{BEH-Kato} only the case $D_i \equiv D$ was treated. We are going to generalize~\cite{BEH-Kato} to allow a varying Dirichlet part among the different components, which was already possible in~\cite{Darmstadt-KatoMixedBoundary} in a more restrictive geometric setting.

    Our main result reads as follows.

    \begin{theorem}[Solution of the Kato square root problem]
		\label{thm:main}
        Let $O \subseteq \R^n$ be an open set and, for $i = 1, \dots, m$, let $D_i \subseteq \partial O$ be a closed subset of the boundary. Set $\Ar{D} = (D_i)_{i=1}^m$. Suppose for every $i$ that $D_i$ is Ahlfors--David regular (Definition~\ref{Def: AD}) and that $O$ is locally uniform near $\partial O \setminus D_i$ (Definition~\ref{def: locally eps-delta}). Then, $\dom(\sqrt{L}) = \IW^{1,2}_\Ar{D}(O)$ holds with equivalence of norms
        \begin{align}
            \norm{u}_2 + \norm{\nabla u}_2 \approx \| \sqrt{L} u \|_2 \mathrlap{\qquad (u \in \IW^{1,2}_\Ar{D}(O)),}
        \end{align}
        and the implicit constants depend on the coefficients of $L$ only through the coefficient bounds.
    \end{theorem}

    The geometric setting announced in the theorem is very general and includes settings way below the Lipschitz class, including fractals like the von Koch snowflake. We will give more information on the geometrical setting in Section~\ref{sec:geometry}.

    Before we explain the central idea of our proof, let us comment on the motivation for a second-order proof for Theorem~\ref{thm:main}. The first-order approach introduces an extra layer of information and complexity, which is crucial for applications to related areas such as elliptic boundary value problems (see the recent monograph~\cite{Block} for background), but can also impede extensions to other settings and questions. One example is the Kato square root problem for parabolic operators. First, it was solved by Auscher, Egert and Nyström~\cite{Parabolic_FirstOrder} using the first-order approach. Their proof is incredibly technical. Later, Ataei, Egert and Nyström~\cite{Parabolic_SecondOrder} gave another proof using the second-order approach, which is not only substantially shorter and easier, but also generalizes the previous result in several aspects. So far, the Kato square root problem for parabolic operators on cylindrical domains was only treated by Ouhabaz~\cite{Parabolic_Ouhabaz} using regularity in time of the coefficients. We expect that the methods developed in this article will prove useful in the treatment of the Kato square root problem for parabolic operators on cylindrical domains without any regularity of the coefficients, taking~\cite{Parabolic_SecondOrder} as a blueprint. Some of the authors of this article will pursue this question in future work.

    We would also like to mention the recent result of Haardt and Tolksdorf~\cite{Luca_Patrick_Kato} on the Kato square root problem for generalized Stokes operators, which employs likewise the second-order approach instead of the first-order approach due to reduced complexity.

    Now, we shed some light on the central insight underlying this article. As we have already mentioned, the standard proofs of the Kato square root problem using the second-order approach use smoothing operators that belong to the functional calculus of $-\Delta$ (or, if $L$ contains lower-order terms as in our case, $-\Delta + 1$ ) and apply them component-wise to the vector-field $\nabla u$ (or, with lower-order terms, to $(u, \nabla u)^t$). For simplicity, we stick to the inhomogeneous case with lower-order terms from now on. Write $Tu = (u, \nabla u)^t$. Then, at least formally, $-\Delta + 1 = T^* T$. Now, reversing the order of terms on the right-hand side of the last identity, we define the operator $M \coloneqq T T^*$. Similar to $-\Delta + 1$, the operator $M$ can be defined using a sesquilinear form, and shares many desirable properties such as self-adjointness, $m$-accretivity, the square root property (for self-adjoint operators) and so forth with it. We will introduce the operators $-\Delta + 1$ and $M$, both subject to mixed boundary conditions, in detail in Section~\ref{sec:Delta and M}. The crucial change in our approach is that we define the smoothing operators within the functional calculus of $M$ instead of $-\Delta + 1$. Since $M$ acts on vector-fields by definition, the same is true for the smoothing operators that we define. Hence, we do not need to employ component-wise constructions. More concretely, our smoothing operators are simply resolvents of $M$, see Definition~\ref{def:smooting operators}. For them, the following beautiful identity, which is a consequence of Proposition~\ref{prop:M and Delta}, holds
    \begin{align}
    \label{eq:intro_intertwining}
        (1 + t^2 M)^{-1} T u = T (1 + t^2 (-\Delta + 1))^{-1} u, \qquad u\in \IW^{1,2}_\Ar{D}(O), t > 0.
    \end{align}
    With~\eqref{eq:intro_intertwining}, we can still commute the \enquote{inhomogeneous gradient} $T$ with the smoothing operator, similar to the commutation property using the Fourier transform on Eucliean space, but this operation will transform the \enquote{vector-valued} operator $M$ into the \enquote{scalar-valued} operator $-\Delta+1$ and vice versa.

    Also, we mention here that the operator $M$ has its place in the $T(b)$-type argument that is key to the solution of the Kato square root problem. We are going to perturb the operator $M$ by the elliptic coefficient matrix $$B = \begin{bmatrix}
        d & c \\
        b & A
    \end{bmatrix}
    \colon O \to \C^{ (n+1)m \times (n+1)m }$$ to obtain the (still sectorial) operator $MB$. Its resolvents will be used to define the $T(b)$-type test functions, see~\eqref{eq:def_b}.

    We tried to keep the present article as self-contained as possible. However, to avoid too much repetition, results and arguments that can literally be taken from~\cite{BEH-Kato} or~\cite{ISEM} will not be carried out again. Hence, the reader is advised to keep copies of these manuscripts handy.

    \subsection*{Notation}

    Give vectors $\xi, \eta \in \C^k$, put $\xi \cdot \eta \coloneqq \sum_{i=1}^k \xi_i \eta_i$. Their inner product can hence be written as $\langle \xi, \eta \rangle = \xi \cdot \overline{\eta}$.
    For $x\in \R^n$ and $r > 0$, write $\B(x,r)$ for the Euclidean ball centered in $x$ of radius $r$. If $B=\B(x,r)$, put $\rad(B) \coloneqq r$. Moreover, if $c > 0$ and $B$ is an Euclidean ball, write $cB$ for the concentric ball of radius $c\rad(B)$. Similarly, for $x\in \R^n$ and $\ell > 0$, write $\Q(x,\ell)$ for the Euclidean (axis-parallel) cube centered in $x$ of sidelength $\ell$. If $Q=\Q(x,\ell)$, put $\ell(Q) \coloneqq \ell$. Moreover, if $c > 0$ and $Q$ is an Euclidean cube, write $cQ$ for the concentric cube of sidelength $c\ell(Q)$.
    For a fixed (open) ambient set $O \subseteq \R^n$, write $\B_O(x,r) \coloneqq \B(x,r) \cap O$ and $\Q_O(x,r) \coloneqq \Q(x,r) \cap O$.
    For $\varphi \in (0, \pi)$, define the open sector of opening angle $2\varphi$ around the positive real axis by $\Sec_\varphi \coloneqq \{ z \in \C \setminus \{ 0 \} \colon |\arg(z)| < \varphi \}$. In the case $\varphi = 0$, put $\Sec_0 \coloneqq (0,\infty)$. The corresponding closed sectors are denoted by $\overline{\Sec}_\varphi$.
    Given $A \subseteq \R^n$ and $\delta > 0$, write $A_\delta \coloneqq \{ x \in \R^n \colon \dist(x,A) < \delta \}$ for the tube of size $\delta$ around $A$.
    Write $\dist(A,B)$ for the Euclidean distance of sets $A,B \subseteq \R^n$.
    For $A \subseteq \R^n$ measurable with $|A| > 0$ and $f$ an integrable function on $A$, write $(f)_A \coloneqq \fint_A f(y) \d y$ for the average of $f$ over $A$. Write $\Cont^\infty_0(\R^n)$ for the space of smooth and compactly supported functions on $\R^n$.
    Let $\mathcal{L}(H,K)$ denote the bounded linear operators between two Hilbert spaces $H$ and $K$, and put $\mathcal{L}(H) \coloneqq \mathcal{L}(H,H)$.
    The space $\H^\infty(\Xi)$ denotes the bounded, holomorphic functions on an open set $\Xi \subseteq \C$.

    \subsection*{Acknowledgments}

    The current manuscript is the outgrowth of a student project in the scope of the \enquote{27th international internet seminar} supervised by the first-named author and carried out by the remaining authors. We thank the organizers of this event, Moritz Egert, Robert Haller, Sylvie Monniaux and Patrick Tolksdorf, for providing the perfect surrounding for this project.

	\section{Geometric setup}
	\label{sec:geometry}

    We adopt the geometric setting used in~\cite{BEH-Kato}. For convenience of the reader, we summarize below the concepts that are necessary to understand our article.

    By $\cH^s(E)$, $s \in (0,n]$, we denote the $s$-dimensional \emph{Hausdorff measure} of $E\subseteq \R^n$ defined as follows. For $\varepsilon>0$, we put $$\cH^s_\varepsilon(E)\coloneqq \inf \Bigl\{ \sum_i \rad(B_i)^s: \bigcup_i B_i \supseteq E, \rad(B_i) \leq \varepsilon \Bigr\},$$ and since this value is increasing as $\varepsilon \to 0$, we define $\cH^s(E) \coloneqq \lim_{\varepsilon\to 0} \cH^s_\varepsilon(E)$, see \cite[Sec.~5.1]{Adams-Hedberg}.

    \begin{definition}
        \label{Def: AD}
        A closed set $D \subseteq \R^n$ is called \emph{Ahlfors--David regular} if there is comparability
        \begin{align}
        \label{eq: l-set}
        \cH^{n-1}(B \cap D) \approx \rad(B)^{n-1}
        \end{align}
        uniformly for all open balls $B$ of radius $\rad(B) < \diam(D)$ centered in $D$.
    \end{definition}

    \begin{remark}
        \label{Rem: AD}
        A practical interpretation is that Ahlfors--David regular sets behave $(n-1)$-dimensional at all scales. When $D$ is bounded, then, for any $C \in (0,\infty)$, the restriction on the radius may equivalently be changed to $\rad(B) \leq C$, at the cost of changing the implicit constants, see~\cite[Lem.~A.4]{BE}. Sets with the latter property for $C=1$ are called $(n-1)$-sets in~\cite{JW}.
    \end{remark}

    \begin{definition}
        \label{Def: ITC}
        An open set $\bO \subseteq \R^n$ is called \emph{interior thick} if
        \begin{align}
        \tag{ITC}
        \label{eq:ITC}
        |B \cap \bO| \gtrsim |B|
        \end{align}
        uniformly for all open balls $B$ of radius $\rad(B) \leq 1$ centered in $\bO$.
    \end{definition}

    In analogy with Remark~\ref{Rem: AD}, the restriction $\rad(B) \leq 1$ is arbitrary and can be changed to $\rad(B) \leq C$ for any fixed constant $C \in (0,\infty)$, up to a change in the implicit constant.

    Throughout the manuscript, we are going to employ the following notational convention.

    \begin{convention}[Boldface symbols]
        \label{conv:bold}
        If the underlying set $O$ of an elliptic system is supposed to be interior thick, we write $\bO$ instead of $O$. All other objects related to the interior thick set $\bO$, such as an elliptic system $L$, the Dirichlet boundary parts $\Ar{D}$ and so on, will likewise be denoted by the corresponding bold letters $\bL$, $\bAr{D}$ and so on.
    \end{convention}

    \subsection{Locally uniform domains}

    \begin{definition}
    \label{def: locally eps-delta}
    Let $\eps \in (0,1]$ and $\delta \in (0,\infty]$.  Let $O \subseteq \R^n$ be open and $N \subseteq \bd O$. Recall the notation $N_\delta  = \{z \in \R^n \colon \dist(z,N) < \delta \}$. Then $O$ is called \emph{locally an $(\eps,\delta)$-domain near $N$} if the following properties hold.

    \begin{enumerate}
    	\item All points  $x,y \in O \cap N_\delta$ with $|x-y| < \delta$ can be joined in $O$ by an \emph{$\eps$-cigar with respect to $\bd O \cap N_\delta$}, that is to say, a rectifiable curve $\gamma \subseteq O$ of length $\ell(\gamma) \leq \eps^{-1}|x-y|$ such that
    	\begin{align}
    	\label{eq: eps-delta}
    	\dist(z, \bd O \cap N_\delta) \geq \frac{\eps|z-x|\, |z-y|}{|x-y|} \qquad \mathrlap{(z \in \gamma).}
    	\end{align}

    	\item $O$ has \emph{positive radius near $N$}, that is, there exists $c > 0$ such that all connected components $O'$ of $O$ with $\bd O' \cap N \neq \emptyset$ satisfy $\diam(O') \geq c$.
    \end{enumerate}

    If the values of $\eps$ and $\delta$ need not be specified, then $O$ is simply called \emph{locally uniform near~$N$}.
    \end{definition}

    \begin{remark}
    \label{Rem: locally eps-delta}
    Definition~\ref{def: locally eps-delta} describes a quantitative local connectivity property of $O$ near $N$. For an illustration of $\eps$-cigars with respect to $\bd O$, the reader can refer, for instance, to~\cite[Fig.~3.1]{Triebel-Wavelets}. Having positive radius is of course only a restriction if $O$ has infinitely many connected components. %
    \end{remark}

    Locally uniform domains near $N$ are compared to other popular geometric frameworks for mixed boundary conditions in~\cite[Prop.~2.5]{BEH-Kato} and the subsequent remarks. We refer the reader to this discussion for more information.

    Locally uniform domains satisfy the following corkscrew condition around the Neumann part $N$.

    \begin{proposition}[Corkscrew condition]
        \label{prop:corkscrew}
        Suppose that $O \subseteq \R^n$ is open and locally an $(\eps,\delta)$-domain near $N \subseteq \partial O$. Then, there exists a constant $\kappa \in (0,1]$ such that:
        \begin{align}
            \forall x\in \overline{N_{\nicefrac{\delta}{2}}\cap O}, r \leq 1 \quad \exists z\in \B(x,r)\colon \quad \B(z,\kappa r) \subseteq O \cap \B(x,r).
        \end{align}
    \end{proposition}

    In Sections~\ref{sec:reduction}--\ref{sec:carleson}, we are going to establish Theorem~\ref{thm:main} first under the additional assumption that $O$ is interior thick.
    Afterwards, we are going to use the corkscrew condition to verify interior thickness of some auxiliary set in Section~\ref{sec:no_thickness}, which will enable us to get rid of the interior thickness condition in Theorem~\ref{thm:main}. Moreover, in combination with Ahlfors--David regularity of $D \coloneqq \partial O \setminus N$, the corkscrew condition yields \emph{porosity} (see the proposition for a definition) of the full boundary $\partial O$, see~\cite[Cor.~2.11]{BEH-Kato} for a proof.

    \begin{proposition}[Porosity of $\partial O$]
        \label{prop:porosity}
        Let $O \subseteq \R^n$ be open and $D \subseteq \partial O$ be closed. Suppose that $D$ is Ahlfors--David regular and that $O$ is locally uniform near $\partial O \setminus D$. Then $\partial O$ is \emph{porous}, that is to say, there exists $\kappa \in (0,1]$ with the property:
        \begin{align}
            \forall x\in \partial O, r\leq 1 \quad \exists z\in \B(x,r)\colon \quad \B(z, \kappa r) \subseteq \B(x,r) \setminus \partial O.
        \end{align}
    \end{proposition}

    In Section~\ref{sec:dyadic}, we are going to introduce so-called \enquote{dyadic structures}, which mimic the concept of dyadic cubes within an interior thick set $\bO$.
    Porosity of $\partial \bO$ will provide additional properties of the dyadic structures.

    \subsection{Sobolev spaces}

    The Hilbert space $\W^{1,2}(O)$ on an open set $O \subseteq \R^n$ is the collection of all $u \in \L^2(O)$ such that $\nabla u \in \L^2(O)^n$ with norm
    \begin{align}
    \label{intrinsic Sobolev norm}
     \|u\|_{\W^{1,2}(O)} \coloneqq \Big(\|u\|_{\L^2(O)}^2 + \|\nabla u\|_{\L^2(O)^n}^2 \Big)^{1/2}.
    \end{align}
    We introduce the subspace of functions that vanish on a subset of $\partial O$ as follows.

    \begin{definition}[Sobolev space subject to mixed BC]
    \label{Def: W12D}
    Let $O \subseteq \R^n$ be open and $D \subseteq \partial O$ be closed. The Hilbert space $\W^{1,2}_D(O)$ is the  $\W^{1,2}(O)$-closure of the set of test functions
    \begin{align}
    \label{Def: CDinfty}
     \Cont^\infty_D(O)\coloneqq \Bigl\{ u|_O: u\in \Cont^\infty_0(\R^n)\; \text{and} \; \dist(\supp(u),D)>0 \Bigr\}.
    \end{align}
    If $\Ar{D} = (D_i)_{i=1}^m$ is an array consisting of sets $D_i \subseteq \partial O$, then define the vector-valued space
    \begin{align}
        \IW^{1,2}_\Ar{D}(O) \coloneqq \bigoplus_{i=1}^m \W^{1,2}_{D_i}(O).
    \end{align}
    \end{definition}

    Due to the product structure, it is often possible to restrict one's attention to the case $m=1$, at least when pure function space questions are concerned. We will do so whenever possible.
    The given definition is standard in the modelling of mixed boundary conditions, see for instance~\cite[Sec.~4.1]{Ouhabaz}.
    In the geometric context of Theorem~\ref{thm:main}, the space $\W^{1,2}_D(O)$ can equivalently be defined by virtue of a more natural class of test functions, see~\cite[Sec.~7]{BHT}.

    For pure Dirichlet boundary conditions, we recover $\W^{1,2}_{\bd O}(O) = \W^{1,2}_0(O)$. To the contrary, in the case of pure Neumann boundary conditions it is possible that $\W^{1,2}_{\emptyset}(O)$ does \emph{not} coincide with $\W^{1,2}(O)$. Indeed, a disk with a slit serves as a counterexample~\cite[Ex.~1.1.10]{Diss}. A sufficient condition for the validity of the identity $\W^{1,2}(O) = \W^{1,2}_{\emptyset}(O)$ is the existence of a bounded Sobolev extension operator. This is, for instance, the case if $O$ is locally uniform, by virtue of Jones' result~\cite{Jones_ExtensionOperator}.

    The following result provides a bounded extension operator for the space $\W^{1,2}_D(O)$. It can be obtained by combining~\cite[Prop.~2.5]{BEH-Kato} with the main result of~\cite{BHT}. In the case $D = \emptyset$, it is compatible with Jones' result.

    \begin{theorem}[Extension operator]
        \label{Thm: W12D extension}
        Let $O \subseteq \R^n$ be open and $D \subseteq \bd O$ be closed. If $O$ is locally uniform near $\bd O \setminus D$, then there exists a bounded linear extension operator $\cE: \W^{1,2}_D(O) \to \W^{1,2}_D(\R^n)$. Moreover, the operator $\cE$ is local and homogeneous in the following sense: there are $c \geq 1$ and $r_0 > 0$, depending on geometry, such that, for $x\in \overline{O}$ and $r \in (0,r_0]$, there holds the estimate
        \begin{align}
            \| \nabla \cE u \|_{\L^2(\B(x,r))} \lesssim \| \nabla u \|_{\L^2(B_O(x, cr))}, \qquad u \in \W^{1,2}_D(O).
        \end{align}
    \end{theorem}

    Using the local and homogeneous estimates for the extension operator, a local Poincaré inequality can be deduced for $\W^{1,2}_D(O)$. This has been carried out in~\cite[Prop.~3.9 \& Rem.~3.10]{BCE}.

    \begin{corollary}[Local Poincaré inequality]
        \label{cor:local_poincare}
        In the setting of Theorem~\ref{Thm: W12D extension}, there are $c \geq 1$ and $r_0 > 0$ depending on geometry such that, for $x\in \overline{O}$ and $r \in (0,r_0]$, there holds the estimate
        \begin{align}
            \| u - (u)_{\B_O(x,r)} \|_{\L^2(\B_O(x,r))} \lesssim r \| \nabla u \|_{\L^2(\B_O(x,c r))}.
        \end{align}
    \end{corollary}

    \begin{remark}
        \label{rem:local_poincare}
        Using a telescoping sum, the following extension of Corollary~\ref{cor:local_poincare} is possible. Let $x \in \overline{O}$ and $\ell$ a positive integer such that $2^\ell r \leq r_0$. Then, there holds the estimate
        \begin{align}
            \| u - (u)_{\B_O(x,r)} \|_{\L^2(\B_O(x,2^\ell r))} \lesssim \bigl(\ell 2^{\nicefrac{n \ell}{2}} \bigr) 2^\ell r \| \nabla u \|_{\L^2(\B_O(x,c 2^\ell r))}.
        \end{align}
    \end{remark}

    \section{Elliptic operator}
	\label{sec:elliptic}

    In this section, we properly define the elliptic system appearing in the formulation of Theorem~\ref{thm:main}. Then, we collect some basic, but important, properties for it.
    Moreover, we define and discuss a counterpart to the Laplacian acting on vector fields, which will play a crucial role in the later analysis.

    \subsection{Elliptic systems in divergence form}
        \label{sec: operator L}

    The main player in the Kato square root problem is the \emph{elliptic system} of second order in divergence form
    $$ Lu = -\sum_{i,j=1}^n \partial_i (a_{ij} \partial_j u) - \sum_{i=1}^n \partial_i (b_i u) + \sum_{j=1}^n c_i \partial_i u + d u $$
    on $ O $ subject to Dirichlet boundary conditions on $ \Ar{D} \subseteq ( \partial O )^m $ and Neumann boundary conditions on $ ( \partial O )^m \setminus \Ar{D} $, where $ m \geq 1 $ denotes the size of the system.

    Throughout this article, we work under the following implicit assumption on the coefficients $ A $, $ b $, $ c $ and $ d $.

    \begin{assumption}{(B)}
        \label{ass: coeffs of L}
        The coefficients of $ L $ are bounded functions on $ O $, that is, $ A \in \L^\infty (O; \C^{ nm \times nm } ) $, $ b \in \L^\infty (O; \C^{nm \times m} ) $,  $ c \in \L^\infty (O; \C^{n \times nm} ) $ and $ d \in \L^\infty (O; \C^{m \times m}) $, and they satisfy the (inhomogeneous) G{\aa}rding inequality: there is some $ \lambda > 0 $ such that
        \begin{equation}
            \label{eq: def garding ineq}
            \mathfrak{a} (u,u)
            \geq \lambda ( \norm{u}_2^2 + \norm{ \nabla u }_2^2 ) \, ,
            \quad u \in \IW^{1,2}_\Ar{D} (O) \, ,
        \end{equation}
        where the form $\mathfrak{a}$ is given by
        \begin{align}
            \label{eq: def of sesqform a}
            \mathfrak{a} (u,v)
            = \int_O A \nabla u \cdot \nabla \bar{v}
            + bu \cdot \nabla \bar{v}
            + (c \nabla u) \bar{v}
            + (d u) \bar{v}
             \, ,
            \quad u,v \in \IW^{1,2}_\Ar{D} (O) \, .
        \end{align}
    \end{assumption}

    Formally, $ L $ can be written as the composition
    \begin{equation}
        \label{eq: L as formal compos}
        L u =
        \begin{bmatrix}
            1 & -\Div
        \end{bmatrix}
        \begin{bmatrix}
            d & c \\
            b & A
        \end{bmatrix}
        \begin{bmatrix}
            1 \\
            \nabla
        \end{bmatrix}
        u
        = T^* B T u \, ,
    \end{equation}
    where the inhomogeneous gradient is given by $ T = [1 \ \ \nabla ]^t $, has (formal) adjoint $ T^* = [1 \ \ -\Div ] $, and $ B $ denotes the matrix
    \begin{equation}
        \label{eq: def of B}
        B =
         \begin{bmatrix}
            d & c \\
            b & A
        \end{bmatrix}
        : O \to \C^{ (n+1)m \times (n+1)m } \,
    \end{equation}
    considered as a bounded multiplication operator on $ \L^2 (O)^{ (n+1)m } $.

    To make this precise, we define the gradient with appropriate boundary conditions as a closed operator on $ \L^2 (O)^m $, and set the divergence as its Hilbert space adjoint.
    	\begin{definition}[Gradient and divergence]
		\label{def:nabla and div}
        Denote by $ \nabla_\Ar{D} : \IW^{1,2}_\Ar{D} ( O ) \subseteq \L^2 ( O )^m \to~\L^2 ( O )^{nm} $ the gradient with Dirichlet boundary conditions on $ \Ar{D} $.
        This is a closed and densely
        defined operator, and we denote its (unbounded) adjoint by $ -\Div_\Ar{D} = (\nabla_\Ar{D})^* : \dom ( \Div_\Ar{D} ) \subseteq \L^2 ( O )^{nm} \to \L^2 ( O )^m $.

    \end{definition}

    Before we define the inhomogeneous gradient, we fix the convenient notation
    \begin{align}
        \label{def: N = (n+1)m }
        N = (n+1)m \, .
    \end{align}
    Elements $ F \in \L^2 ( O )^N $ will sometimes be referred to as vector fields.

    \begin{definition}[The operators $ T_\Ar{D} $ and $ T^*_\Ar{D} $]
        \label{def: operator T and T*}
        Define the inhomogeneous gradient $ T_\Ar{D} = ( 1 , \nabla_\Ar{D} )^t : \IW^{1,2}_\Ar{D} ( O ) \subseteq \L^2 ( O )^m \to \L^2 ( O )^N $ with Dirichlet boundary conditions on $ \Ar{D} $, and denote by $ T^*_\Ar{D} : \dom ( T^*_\Ar{D} ) \subseteq \L^2 ( O )^N \to \L^2 ( O )^m $ its adjoint.
        By a slight abuse of notation, we occasionally let $ T^*_\Ar{D} $ also denote the bounded extension $ T^*_\Ar{D} : \L^2 ( O )^N \to (\W^{1,2}_\Ar{D} ( O ))^* $.
    \end{definition}
    Note that $\| T_\Ar{D} u \|_2 = \| u \|_{\IW^{1,2}_\Ar{D}(O)}$ for $u \in \IW^{1,2}_\Ar{D}(O)$.
    If no confusion can arise, we sometimes write $T$ instead of $T_\Ar{D}$, and $T^*$ instead of $T^*_\Ar{D}$ to ease notation.

    \begin{definition}
        \label{def: def of L}
        We define the elliptic operator $ L $ as the composition
        \begin{equation}
            \label{eq: def of L}
            L = T_\Ar{D}^* B T_\Ar{D}
            : \dom (L) \subseteq \L^2 (O)^m
            \to \L^2 (O)^m
        \end{equation}
        with maximal domain in $ \L^2 (O)^m$.%
    \end{definition}
    While the direct definition is useful for algebraic manipulation, $ L $ can equivalently be defined as the operator on $ \L^2 (O)^m $ associated to the sesquilinear form $\mathfrak{a}$ from~\eqref{eq: def of sesqform a}. To this end, observe the identity
    \begin{align}
        \mathfrak{a}(u,v) = \int_O B T_\Ar{D} u \cdot \overline{ T_\Ar{D} v } \, ,
            \quad u,v \in \IW^{1,2}_\Ar{D} (O) \, .
    \end{align}
    Assumption~\ref{ass: coeffs of L} ensures that the form $ \mathfrak{a} $ is sectorial, so that the associated operator on $ \L^2 (O)^m $ is well-defined.
    For a proof of the equivalence of these two definitions see~\cite[Lemma~12.1]{seb_book}.
    Also associated to $ \mathfrak{a} $ is a bounded operator
    $$ \mathcal{L} : \IW^{1,2}_\Ar{D} (O) \to (\IW^{1,2}_\Ar{D} (O))^* $$
    extending $ L $ to $ \IW^{1,2}_\Ar{D} (O) $.
    As for $ L $, the bounded extension $ \mathcal{L} $ is given as the composition
    $$ \mathcal{L} = T^*_\Ar{D} B T_\Ar{D} \, , $$
    where this time $ T^*_\Ar{D} $ is understood in the \enquote{very weak sense}, that is, as the bounded operator $ T^*_\Ar{D} : \L^2 ( O )^N \to (\IW^{1,2}_\Ar{D} ( O ))^* $.

    One easily verifies that the adjoint $ L^* $ is the operator associated to the adjoint form $ \mathfrak{a}^* $, which shares the same properties as $L$ for Assumption~\ref{ass: coeffs of L} is invariant under taking adjoints.
    We collect some more important properties of the operator $ L $, and refer to~\cite[Chapter~6]{seb_book} and~\cite{ISEM} for background and proofs.
    \begin{proposition}
        \label{prop: L m accr and sectorial}
        The operator $ L $ is $ m $-accretive and sectorial.
        In particular, the resolvent family $ \{ (1+ t^2 L )^{-1} \}_{ t>0 } $ is uniformly bounded in $ \mathcal{L} ( \L^2 ( O ) )^m $.
    \end{proposition}
    By the von Neumann inequality and McIntosh's theorem, we obtain quadratic estimates for $ L $.
    \begin{corollary}
        \label{cor: L has quad est}
        The operator $ L $ has a bounded $ \H^\infty $-calculus and satisfies quadratic estimates.
    \end{corollary}
    Besides uniform boundedness of resolvents, we sometimes need to control terms of the form $ T_\Ar{D} ( 1+ t^2 L )^{-1} $, which can be achieved via the following simple result.
    \begin{lemma}
        \label{lem:adjvoint of nabla res}
        The family $ \{ tT_\Ar{D} ( 1+ t^2 L )^{-1} \}_{ t>0 } $ is uniformly bounded in $ \mathcal{L} ( \L^2 ( O )^m , \L^2 ( O )^N ) $, and its adjoint family is given by $ \{ t ( 1+ t^2 \mathcal{L} )^{-1} T_\Ar{D}^* \}_{ t>0 } $, where $ T_\Ar{D}^* $ is the bounded operator $ \L^2 ( O )^N \to (\IW^{1,2}_\Ar{D} ( O ))^* $.
    \end{lemma}

    \subsection{The operators \texorpdfstring{$ -\Delta_\Ar{D} + 1 $}{-Delta + 1} and \texorpdfstring{$ M_\Ar{D} $}{M}}
	\label{sec:Delta and M}

    Setting $ B \equiv \mathrm{id}_{ \C^N } $, we obtain the inhomogeneous Laplacian $ - \Delta_\Ar{D} +1 $ given by the composition $ -\Delta_\Ar{D} +1 = T^*_\Ar{D} T_\Ar{D} $ with
    corresponding form
    \begin{equation}
        \label{eq: form for laplace}
        \mathfrak{a} (u,v)
        = \scal{ T_\Ar{D} u , T_\Ar{D} v } \, ,
        \quad u,v \in \IW^{1,2}_\Ar{D} (O) = \dom ( T_\Ar{D} ) \, .
    \end{equation}
    Considering the sesquilinear form derived from $ T^*_\Ar{D} $,
    \begin{align}
        \label{eq: sesq form for M}
        \mathfrak{b} (F,G)
        = \scal{ T^*_\Ar{D} F , T^*_\Ar{D} G } \, ,
        \quad F,G \in \dom ( T^*_\Ar{D} ) \, ,
    \end{align}
    we obtain a natural operator $ M_\Ar{D} $ acting on vector fields $ F \in \L^2 ( O )^N $.

	\begin{definition}[The operator $M_\Ar{D}$]
		\label{def:M}
        Define the operator $ M_\Ar{D} : \dom ( M_\Ar{D} ) \subseteq \L^2 ( O )^N \to~\L^2 ( O )^N $ as the composition $ M_\Ar{D} = T_\Ar{D} T^*_\Ar{D} $ with maximal domain
        $$ \dom ( M_\Ar{D} ) = \bigl\{ F \in \dom ( T^*_\Ar{D} ) : T^*_\Ar{D} F \in \dom  ( T_\Ar{D} ) \bigr\} . $$
	\end{definition}

    \begin{remark}
        \label{rem: T* is surj}
        We claim that $ \ran(M_\Ar{D}) = \ran(T_\Ar{D}) $, and this is a closed subspace of $ \L^2 (O)^N $. First, closedness of $\ran(T_\Ar{D})$ is equivalent to the fact that the Sobolev space $\IW^{1,2}_\Ar{D}(O)$ is complete. Second, $\ran(T_\Ar{D}) \subseteq \ran(M_\Ar{D})$ follows from invertibility of $-\Delta_\Ar{D} + 1$. Indeed, let $u \in \IW^{1,2}_\Ar{D}(O)$ and write $u = (-\Delta_\Ar{D} + 1) v = T_\Ar{D}^* T_\Ar{D} v$ for some $v \in \dom(-\Delta_\Ar{D} + 1)$. Hence, $T_\Ar{D} u = T_\Ar{D} T_\Ar{D}^* T_\Ar{D} v = M_\Ar{D} F$, where $F \coloneqq T_\Ar{D} v \in \dom(M_\Ar{D})$.
    \end{remark}

    Formally, $ M_\Ar{D} $ is given by the operator matrix $
    \begin{bmatrix}
        1 & -\Div_\Ar{D} \\
        \nabla_\Ar{D} & -\nabla_\Ar{D} \Div_\Ar{D} \,
    \end{bmatrix}.$
    Note, however, that for $ F = ( F_0 , F' )^t \in \L^2 ( O )^m \times \L^2 ( O )^{nm} $, $ F \in \dom ( M_\Ar{D} ) $ does not imply $ F' \in \dom ( \nabla_\Ar{D} \Div_\Ar{D} ) $, but only $ F_0 - \Div_\Ar{D} F' \in \dom ( \nabla_\Ar{D} ) $.

    From the sesquilinear form $ \mathfrak{b} $, the operator $ M_\Ar{D} $ inherits the same desirable properties as the inhomogeneous Laplacian.

    \begin{proposition}
        \label{prop: M is self-adjoint}
        The operator $ M_\Ar{D} $ is self-adjoint and non-negative.
        In particular, $ M_\Ar{D} $ has a bounded $ \H^\infty $-calculus on its (closed) range and satisfies quadratic estimates as well as the square root property
        \begin{align}
            \label{eq: square root ppty for M}
            \normm{ \sqrt{ M_\Ar{D} } F }_2
            = \normm{ T_\Ar{D}^* F }_2 \, ,
            \quad F \in \dom ( T_\Ar{D}^* )
            = \dom ( \sqrt{ M_\Ar{D} } ) \, .
        \end{align}
    \end{proposition}

    The following intertwining property between $ M_\Ar{D} $ and $ -\Delta_\Ar{D} + 1 $ is essential for our arguments in Section~\ref{sec:principal part approximation}.

	\begin{proposition}[Intertwining between $M_\Ar{D}$ and $-\Delta_\Ar{D} + 1$]
		\label{prop:M and Delta}
		Let $f\in \H^\infty(\Sec_\varphi)$ for some $\varphi \in (0,\pi)$.
		Then, one has the intertwining relation $$T_\Ar{D} f(-\Delta_\Ar{D} + 1) u = f(M_\Ar{D}) T_\Ar{D} u \qquad\mathrlap{(u\in \IW^{1,2}_\Ar{D}(O)).}$$
	\end{proposition}
    \begin{proof}
        We suppose familiarity with the construction and basic properties of the sectorial functional calculus. A reader who is not yet familiar with this concept can consult the monograph~\cite{Haase} or~\cite[Sec.~5]{ISEM}.

        Fix $u\in \IW^{1,2}_\Ar{D}(O)$. Also, let $\varphi \in (0,\pi)$ and $f\in \H^\infty(\Sec_\varphi)$. %
        We comment first why the identity from the statement is well-defined. First of all, $f(-\Delta_\Ar{D} + 1)$ is well-defined since the operator $-\Delta_\Ar{D} + 1$ is sectorial of angle zero and injective (even invertible). Next, write $u = (-\Delta_\Ar{D} + 1)^{-\frac{1}{2}} v$ for a suitable $v \in \L^2(O)^m$, which uses the Kato square root property for $-\Delta_\Ar{D} + 1$. Then, the representation $f(-\Delta_\Ar{D} + 1) u = (-\Delta_\Ar{D} + 1)^{-\frac{1}{2}} f(-\Delta_\Ar{D} + 1) v$ shows that $f(-\Delta_\Ar{D} + 1)u$ is indeed an element of $\IW^{1,2}_\Ar{D}(O) = \dom(T_\Ar{D})$, where we used a basic commutation property of the functional calculus and boundedness of the $\H^\infty$-calculus for $-\Delta_\Ar{D} + 1$. Therefore, the left-hand side of the claimed identity is well-defined. For the right-hand side, note that $M_\Ar{D}$ is in general not injective. However, if we restrict $M_\Ar{D}$ to $\overline{\ran(M_\Ar{D})} = \ran(M_\Ar{D})$ (see Remark~\ref{rem: T* is surj} for this identity), then $M_\Ar{D}$ becomes injective and $f(M_\Ar{D})$ can be defined as an operator on $\ran(M_\Ar{D})$.
        We are not going to distinguish $M_\Ar{D}$ from its injective part notation-wise.
        Since $T_\Ar{D} u \in \ran(M_\Ar{D})$ by Remark~\ref{rem: T* is surj}, $f(M_\Ar{D})$ is indeed defined on the \enquote{gradient field} $T_\Ar{D} u$ appearing in the statement.

        We split the proof into 3 steps.

        \textbf{Step 1}: Show $T_\Ar{D} (z - (-\Delta_\Ar{D} + 1))^{-1} u = (z - M_\Ar{D})^{-1} T_\Ar{D} u$ for all $z \in \C \setminus [0, \infty) $.

        The operators $-\Delta_\Ar{D} + 1$ and $M_\Ar{D}$ are sectorial of angle zero, hence their resolvents for $z \in \mathbb{C} \setminus [0,\infty)$ are well-defined.
        Put $v \coloneqq (z - (-\Delta_\Ar{D} + 1))^{-1} u$ for convenience. We can apply $(z - (-\Delta_\Ar{D}+1))$ to the definition of $v$ to find $(z - (-\Delta_\Ar{D} + 1)) v = u$. Since $u\in \IW^{1,2}_\Ar{D}(O)$, we can apply $T_\Ar{D}$ to the last identity. In conjunction with $T_\Ar{D} (-\Delta_\Ar{D} + 1) = M_\Ar{D} T_\Ar{D}$, this gives
        \begin{align}
            T_\Ar{D} u = T_\Ar{D} (z - (-\Delta_\Ar{D} + 1)) v = (z - M_\Ar{D}) T_\Ar{D} v.
        \end{align}
        Finally, we apply $(z - M_\Ar{D})^{-1}$ to the last identity and plug in the definition of $v$ to conclude
        \begin{align}
            (z - M_\Ar{D})^{-1} T_\Ar{D} u = T_\Ar{D} v = T_\Ar{D} (z - (-\Delta_\Ar{D} + 1))^{-1} u.
        \end{align}

        \textbf{Step 2}: Show the intertwining relation for $f \in \H_0^\infty(\Sec_\varphi)$.

        Since $f \in \H_0^\infty(\Sec_\varphi)$, we can write $f(M_\Ar{D})$ and $f(-\Delta_\Ar{D} + 1)$ via Cauchy integrals. Then, the claim follows from Step~1 by standard arguments.

        \textbf{Step 3}: The general case $f\in \H^\infty(\Sec_\varphi)$.

        Recall
        the standard regularizer $e(z) = z(1 + z)^{-2}$ for the $\H^\infty$-calculus of an injective sectorial operator. By definition, $f(-\Delta_\Ar{D} + 1) u = e(-\Delta_\Ar{D} + 1)^{-1} (ef)(-\Delta_\Ar{D} + 1) u$. As discussed at the start of the proof, $f(-\Delta_\Ar{D} + 1) u$ is again in $\IW^{1,2}_\Ar{D}(O)$. Therefore, we can apply $T_\Ar{D}$ to the last identity to get $T_\Ar{D} f(-\Delta_\Ar{D} + 1) u = T_\Ar{D} e(-\Delta_\Ar{D} + 1)^{-1} (ef)(-\Delta_\Ar{D} + 1) u$. By construction of the functional calculus, $v \coloneqq (ef)(-\Delta_\Ar{D} + 1) u$ is in the range of $e(-\Delta_\Ar{D} + 1)$. Furthermore, $v\in \IW^{1,2}_\Ar{D}(O)$. Put $w \coloneqq e(-\Delta_\Ar{D} + 1)^{-1} v$. We claim that also $w\in \IW^{1,2}_\Ar{D}(O)$. Indeed, $w = f(-\Delta_\Ar{D} + 1) u \in \IW^{1,2}_\Ar{D}(O)$ as observed above.

        Next, we show $T_\Ar{D} e(-\Delta_\Ar{D} + 1)^{-1} v = e(M_\Ar{D})^{-1} T_\Ar{D} v$. To this end, write $T_\Ar{D} v = T_\Ar{D} e(-\Delta_\Ar{D} + 1) w = e(M_\Ar{D}) T_\Ar{D} w$, where we used Step~2 for the last manipulation. Therefore, $T_\Ar{D} v \in \ran(e(M_\Ar{D}))$ and we conclude $e(M_\Ar{D})^{-1} T_\Ar{D} v = T_\Ar{D} w = T_\Ar{D} e(-\Delta_\Ar{D} + 1)^{-1} v$ as desired.

        In summary, we deduce with another application of Step~2 to $ef \in \H^\infty_0(\Sec_\varphi)$ that
        \begin{align}
            T_\Ar{D} f(-\Delta_\Ar{D} + 1) u &= T_\Ar{D} e(-\Delta_\Ar{D} + 1)^{-1} (ef)(-\Delta_\Ar{D} + 1) u \\
            &= e(M_\Ar{D})^{-1} T_\Ar{D} (ef)(-\Delta_\Ar{D} + 1) u \\
            &= e(M_\Ar{D})^{-1} (ef)(M_\Ar{D}) T_\Ar{D} u \\
            &= f(M_\Ar{D}) T_\Ar{D} u. \qedhere
        \end{align}
    \end{proof}

	\begin{remark}
		The last proposition is our substitute for the observation made in~\cite[Rem.~4.5]{AKM-QuadraticEstimates}.
	\end{remark}

    One important property of elliptic operators in divergence form is their off-diagonal decay.
    A standard way to derive them is by using Davies' trick. In our geometric setting, this was done in~\cite{Egert_Lp_Kato,Bechtel_Lp_Kato} for the families $ \{ \e^{-t^2 L} \}_{ t>0 } $ and $ \{ t \nabla \e^{-t^2 L} \}_{ t>0 } $. The same proof also applies to the family $ \{ t\e^{-t^2 L} \}_{ t>0 } $. Indeed, it suffices to observe that the coercivity condition used in the treatment of the gradient family is in fact an inhomogeneous coercivity condition.
    Using the Laplace transform, these bounds can be transfered to the respective resolvent families.

	\begin{proposition}[Off-diagonal estimates]
		\label{prop:ode}
        The families $ \{ t(1+ t^2 L )^{-1} \}_{ t>0 } $ and $ \{ t \nabla (1+ t^2 L )^{-1} \}_{ t>0 } $ satisfy $ \L^2 $ off-diagonal estimates,
        that is, there exist constants $ c, C >0 $ such that, for all $ t>0 $, all measurable subsets $ E, F \subseteq O $ and all $ u \in \L^2 ( O ) $, there holds
        \begin{equation*}
            \normm{ \one_E t( 1+ t^2 L )^{-1} \one_F u }_2
            + \normm{ \one_E t \nabla
            ( 1+ t^2 L )^{-1} \one_F u }_2
            \leq C \e^{ -c \frac{ \dist (E,F) }{t} }
            \norm{ \one_F u }_2 \, .
        \end{equation*}
        By duality, also the adjoint family $ \{ t ( 1+ t^2 \mathcal{L} )^{-1} \Div \}_{ t>0 } $
        satisfies $ \L^2 $ off-diagonal estimates.
	\end{proposition}

	\subsection{Perturbations of \texorpdfstring{$M_{\Ar{D}}$}{M}}
	\label{sec:M perturb}

    We perturb the operator $ M_\Ar{D} $ by the elliptic coefficient function $B$. The resulting operator $ M_\Ar{D} B $ is again sectorial, a fact that we are going to exploit in the construction of $ T(b) $-type test functions in Section~\ref{sec:carleson}.

    For instance in~\cite[Prop.~6.1.17]{Diss}, it is shown that the perturbation $ DB $ of a self-adjoint operator $ D $ is bisectorial.
    We make a minor adjustment to their argument to prove that, if $ D $ is non-negative, then $ DB $ is even sectorial.
    We formulate the subsequent result only for the choice $D = M_\Ar{D}$.
    Let us mention that, if $B$ were even strongly elliptic and not merely G{\aa}rding elliptic, then the subsequent result could be proved in an easier manner~\cite[Prop.~6.9]{seb_book}.

	\begin{lemma}
		\label{lem:M perturbation}
		If the coefficients $ B $ satisfy the G{\aa}rding ellipticity condition on $ \ran T_\Ar{D} $ as stated in Assumption~\ref{ass: coeffs of L}, then the operator $ M_\Ar{D} B$ is sectorial.
        Moreover, the family $ \{ t T^*_\Ar{D} B (1+ t^2 M_\Ar{D} B )^{-1} \}_{ t>0 } $ is uniformly bounded in $ \mathcal{L} ( \L^2 ( O )^N , \L^2 ( O )^m ) $.
	\end{lemma}
    \begin{proof}
        To ease notation, write $ M $, $T$, and $T^*$ for the operators $ M_\Ar{D} $, $T_\Ar{D}$, and $T^*_\Ar{D}$, and write $\| \cdot \|$ instead of $\| \cdot \|_2$. Recall that $ B $ satisfies Gårding's inequality on $ \ran(T) = \cl{ \ran(M) } $.
        The same is true for $B^*$ because Assumption~\ref{ass: coeffs of L} is invariant under taking the adjoint.
        From~\cite[Prop.~6.1.17]{Diss}, we have the (non-orthogonal) topological decompositions of $ \L^2 (O)^N $ given by
        \begin{align}
            \label{eq: top splitting 1}
            \L^2 (O)^N
            &= \ker (MB) \oplus \overline{ \ran (MB) } \, , \\
            \label{eq: top splitting 2}
            \L^2 (O)^N
            &= \ker ( B^* M) \oplus \overline{ \ran ( B^* M) } \, .
        \end{align}
        Now, set
        \begin{equation}
            \label{eq: sectoriality angle for MB}
            \omega
            \coloneqq \sup_{ 0 \neq F \in \overline{ \ran M } }
            |\arg \scal{ BF , F }| \in [0, \tfrac{ \pi }{2} ) \,  ,
        \end{equation}
        and take $ \phi \in ( \omega , \pi ) $.
        Using the decompositions above, we prove that, for all $ z \in \C \setminus \overline{\Sec}_\phi $, we have the estimates
        \begin{align}
            \label{eq: estimate for sec of MB 1}
            \normm{ ( z - MB ) U }
            &\gtrsim \abs{ z } \norm{U} \, ,
            \quad U \in \dom (MB) \, , \\
            \label{eq: estimate for sec of MB 2}
            \normm{ ( \bar{ z } - B^* M ) U }
            &\gtrsim \abs{ z } \norm{U} \, ,
            \quad U \in \dom ( B^* M ) \, .
        \end{align}
        From this, we deduce that $ MB $ is sectorial as follows.
        Owing to~\eqref{eq: estimate for sec of MB 2}, we find that $\overline{z} - B^* M$ is injective and that its range is closed.
        Since $ B $ is bounded, we have $ ( B^* M )^* = MB $.
        Hence,~\eqref{eq: estimate for sec of MB 1} yields that $\overline{z} - B^* M$ has dense range. Summarizing the two facts, we find $ \bar{ z } \in \rho ( B^* M ) $.
        Taking the estimate~\eqref{eq: estimate for sec of MB 2} into account, this shows sectoriality of $B^* M$.
        Next, using that $ B^* M $ is closed, it follows
        \begin{equation}
            \label{eq: adjoint of MB}
            (MB)^* = ( B^* M )^{**}
            = \text{cl} ( B^* M ) = B^* M \, .
        \end{equation}
        Thus, arguing similarly as above entails sectoriality of $MB$.

        We turn our attention to proving~\eqref{eq: estimate for sec of MB 1}. Let $ z \in \C \setminus \overline{\Sec}_\phi $.
        Let $ U \in \dom (MB) $ and, according to~\eqref{eq: top splitting 1}, write $ U = V+W $ with $ V \in \ker (MB) $ and $ W \in \cl{ \ran (MB) } \cap \dom (MB) $.
        Note that $BW \in \dom(M)$.
        Hence, by positivity of $ M $, it holds $ \scal{ BW , MB W } \geq 0 $.
        Moreover, since $W \in \overline{\ran(MB)} \subseteq \overline{\ran(M)}$, we have $ \scal{ BW , W } \in \overline{\Sec}_\omega $. Therefore, $ \bar{ z } \scal{ BW , W } \in \C \setminus \overline{\Sec}_{ \phi - \omega } $. Cover $\C \setminus \overline{\Sec}_{ \phi - \omega }$ by the set $ \C \setminus (\overline{\Sec}_{ \phi - \omega } \cup -\overline{\Sec}_{ \phi - \omega })$, which is the complement of a bisector, and the (closed) sector around the negative real axis $ -\overline{\Sec}_{ \phi - \omega }$. In the first case, if $w \in \C \setminus (\overline{\Sec}_{ \phi - \omega } \cup -\overline{\Sec}_{ \phi - \omega })$, then there holds comparability
        \begin{align}
            \label{eq:pert_M_comp1}
            |w| \approx_{\phi - \omega} |\Im w|
        \end{align}
        as in~\cite[Prop.~6.1.17]{Diss}. Otherwise, if $w\in -\overline{\Sec}_{ \phi - \omega }$, there holds comparability
        \begin{align}
            \label{eq:pert_M_comp2}
            |w| \approx |\Im w| - \Re w.
        \end{align}
        Suppose for the moment $\langle BW, z W\rangle \in -\overline{\Sec}_{ \phi - \omega }$.
        Then, from $ W \in \cl{ \ran(M) } = \ran(T) $ in conjunction with Gårding's inequality,~\eqref{eq:pert_M_comp2}, positivity of $M$, and boundedness of $B$, we get
        \begin{align}
            \abss{ z } \norm{W}^2
            &\les \abss{ \bar{ z } }
            \Re \scal{ BW , W } \\
            &\les \abss{ \scal{ BW , z  W } } \\
            &\approx \abss{ \Im \scal{ BW , z  W } }
            + \Re \scal{ BW , - z W } \\
            &\leq \abss{ \Im \scal{ BW , ( z - M B) W } }
            + \Re \scal{ BW , ( M B - z ) W }\\
            &\leq 2 \norm{BW} \normm{ ( z - M B ) W } \\
            &\les \norm{W} \normm{ ( z - M B ) W } \, .
        \end{align}
        Otherwise, if $\langle BW, z W\rangle$ belongs to the complement of the bisector, a similar calculation using~\eqref{eq:pert_M_comp1} instead of~\eqref{eq:pert_M_comp2} yields
        \begin{align}
            \abss{ z } \norm{W}^2 \lesssim_{\phi-\omega} \abss{ \Im \scal{ BW , ( z - M B) W }} \lesssim \norm{W} \normm{ ( z - M B ) W } \, .
        \end{align}
        Combining both cases yields $ \norm{ ( z - MB) W } \gtrsim \abs{ z } \norm{W} $.
        Furthermore, $\norm{ ( z - MB) V } = \abs{ z } \norm{v}$ for $V \in \ker(MB)$.
        Since $ ( z - MB) W \in \cl { \ran (MB) } $ and $MB V = 0 \in \ker(MB)$, the topological decomposition~\eqref{eq: top splitting 1} gives
        \begin{align}
            \norm{ ( z - MB) U }
            \approx \norm{ ( z - MB) V }
            + \norm{ ( z - MB) W }
            \gtrsim \abs{ z } ( \norm{V} + \norm{W} )
            \approx \abs{ z } \norm{U} \, ,
        \end{align}
        which completes the proof of~\eqref{eq: estimate for sec of MB 1}.

        To prove~\eqref{eq: estimate for sec of MB 2}, we split $ U \in \dom ( B^* M ) $ as $ U = V + Z $ with $ V \in \ker ( B^* M ) $ and $ Z \in \cl{ \ran ( B^* M ) } \cap \dom ( B^* M ) $.
        Since $ \norm{ B^* F} \approx \norm{F} $ on $ \overline{\ran (M)} $, we have $\cl { \ran ( B^* M ) } = B^* ( \cl{ \ran (M) } )$. Therefore, $ Z = B^* W $ for some $ W \in \cl { \ran (M) } \cap \dom ( B^* M B^* ) $.
        Reusing the calculation leading to~\eqref{eq: estimate for sec of MB 1} from above with $B$ replaced by $B^*$, we have
        \begin{align}
            \abss{ \Bar{ z } } \norm{ B^* W }^2
            \les \abs{ z } \norm{W}^2
            &\lesssim \norm{ B^* W }
            \norm{ ( \bar{ z } - M B^* ) W } \\
            &\les \norm{ B^* W }
            \norm{ B^* ( \bar{ z } - M B^* ) W },
        \end{align}
        where we used $(\bar{ z } - M B^* ) W \in \overline{\ran(M)}$ in the last step.
        Thus, plugging in the definition of $Z$, deduce $ \abss{ \bar{ z } } \norm{Z} \les \normm{ ( \bar{ z } - B^* M ) Z } $. Then, as before, we deduce from the splitting~\eqref{eq: top splitting 2} the estimate~\eqref{eq: estimate for sec of MB 2}.

        Finally let us prove uniform boundedness of $ t T^* B (1+ t^2 M B )^{-1} $.
        Observe that $ B ( 1+ t^2 M B )^{-1} $ maps into the domain of $ M $, and that $M$ is associated to the sesquilinear form $\mathfrak{b}$.
        Thus, for $ F \in \L^2 ( O )^N $, we have
        \begin{align}
            \normm{ t T^* B (1+ t^2 M B )^{-1} F }^2
            &= \mathfrak{b} ( t^2 B (1+ t^2 M B )^{-1} F ,
            B (1+ t^2 M B )^{-1} F ) \\
            &= \scal{ t^2 M B (1+ t^2 M B )^{-1} F ,
            B (1+ t^2 M B )^{-1} F } \\
            &\lesssim \norm{ (1 - (1+ t^2 M B )^{-1} ) F }
            \norm{ (1+ t^2 M B )^{-1} F }
            \les \norm{F}^2.
        \end{align}
        The last two steps make use of the Cauchy--Schwarz inequality, the boundedness of $ B $ and uniform boundedness of the resolvents $ (1+ t^2 M B )^{-1} $ in $ \mathcal{L} ( \L^2 ( O )^N ) $, where the latter follows by sectoriality of $M B$ established in the first part of the proof.
    \end{proof}

    \section{Review on dyadic structures}
	\label{sec:dyadic}

    Dyadic cubes are the foundation of dyadic harmonic analysis in $\R^n$.
    In this section, we recall a substitute on interior thick open sets for them, which will be fundamental for the analysis presented in Sections~\ref{sec:principal part approximation} and~\ref{sec:carleson}.

    To put things into a broader context, we note that an open and interior thick set $\bO$ equipped with the restricted Euclidean metric and restricted Lebesgue measure is a locally doubling space, that is, there exists $C \geq 2$ such that for all $x\in \bO$ and $r \in (0,1]$ it holds
        \begin{align}
        \label{eq:doubling}
            \abs{\B_\bO(x,2r)} \leq C \abs{\B_\bO(x,r)}.
        \end{align}
    In~\cite[Thm 11]{ChristDyadic}, Christ has constructed a full dyadic structure in spaces that fulfill the doubling property~\eqref{eq:doubling}, even for all $r\in (0,\infty)$.
    With the local doubling property from above, the same construction gives a dyadic structure on small scales, that is, we can decompose the space $\bO$ into small dyadic \enquote{cubes} but not into large dyadic \enquote{cubes}.
    For our formulation here, we combine~\cite[Thm.~7.4]{Laplace-Extrapolation} with the boundary estimate~\cite[Lem.~5.2]{BEH-Kato} and the rescaling argument from~\cite[Prop. 2.12]{DeltaArg}. The boundary estimate, that is, property~(5) in the subsequent proposition, is a consequence of porosity of $\partial\bO$ established in Proposition~\ref{prop:porosity}. The remaining results hold for $\bO$ merely interior thick.

	\begin{proposition}[Existence of a dyadic structure]
		\label{prop:dyadic structure}
        Suppose that $\bO$ is interior thick and that $\partial \bO$ is porous. Then, there exists a collection $\{Q^k_\alpha\ :\  k \in \N_0,\ \alpha \in I_k\}$ of open
        sets, where $I_k$ are countable index sets, and constants $a_0$, $a_1$, $\eta$, $C > 0$ such that the following hold:
        \begin{enumerate}
            \item For each $k \in \N_0$, it holds $\abs{\bO \setminus \cup_{\alpha \in I_k} Q^k_{\alpha}} = 0$.
            \item If $\ell \geq k$, then, for each $\alpha \in I_k$ and each $\beta \in I_\ell$, either $Q^\ell_\beta \subseteq Q^k_\alpha$ or $Q^\ell_\beta \cap Q^k_\alpha = \emptyset$.
            \item If $\ell \leq k$, then, for each $\alpha \in I_k$, there is a unique $\beta \in I_\ell$ such that $Q^k_\alpha \subseteq Q^\ell_\beta$.
            \item For each $Q^k_\alpha$, $k \in \N_0$, $\alpha \in I_k$, there exists $z^k_\alpha \in \bO$ such that $$\Q(z^k_\alpha, a_0 2^{-k}) \cap \bO \subseteq Q^k_\alpha \subseteq \Q(z^k_\alpha, a_1 2^{-k}) \cap \bO.$$
            \item If $k \in \N_0$, $\alpha \in I_k$, and $s>0$, then
                \begin{align}
                    \abs{\{x \in Q^k_\alpha\ :\ \mathrm{d}(x, \R^n \setminus Q^k_\alpha) \leq s2^{-k}\}} \leq Cs^{\eta}\abs{Q^k_\alpha}.
                \end{align}
        \end{enumerate}
	\end{proposition}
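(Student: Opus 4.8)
The statement is essentially a citation-combination result: we want Christ's dyadic cubes on $\Omega$ but only on scales $2^{-k}$ with $k \in \N_0$ (i.e., below a fixed threshold), because the local doubling property~\eqref{eq:doubling} only holds for radii $r \in (0,1]$. The plan is to first verify that $(\Omega, |\cdot|, \mathrm{d}x)$ restricted to small scales is a space of homogeneous type in the sense required by the constructions in~\cite{Diss}. Concretely, from Assumption~\ref{ass:Omega} one deduces~\eqref{eq:doubling}: for $x \in \Omega$ and $r \in (0,1]$, $n$-regularity gives $|\B_\Omega(x;r)| = |\B(x;r) \cap \Omega| \gtrsim r^n$ via Remark~\ref{alt:n-regular} (comparing balls and cubes up to dimensional constants), while trivially $|\B_\Omega(x;2r)| \leq |\B(x;2r)| \lesssim r^n$; combining these gives the doubling constant $C$. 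I would state this doubling-on-small-scales fact as the input and then invoke the cited theorems.

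Next I would apply~\cite[Thm.~4.2.3, Lemma~4.2.5]{Diss}, which carry out Christ's construction in the localized setting and directly produce a family $\{Q^k_\alpha\}$ for $k \in \N_0$ together with the nestedness properties (2), (3), the almost-everywhere partition property (1), the comparability to cubes/balls in (4), and the small-boundary (thin annulus) estimate (5). The one point needing care is the exact normalization: the cited results typically fix the top scale via the doubling radius, and one wants the cubes at generation $k=0$ to have sidelength comparable to $1$ (the scale at which doubling stops). Here the rescaling argument from~\cite[Prop.~2.12]{DeltaArg} is what lets us pin down generation $0$ to scale $2^0 = 1$ and read off the constants $a_0, a_1, \eta, C$ in the normalization stated above; I would cite it for exactly that bookkeeping. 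Property (4) in the form $\Q(z^k_\alpha; a_0 2^{-k}) \cap \Omega \subseteq Q^k_\alpha \subseteq \Q(z^k_\alpha; a_1 2^{-k}) \cap \Omega$ follows from the analogous ball statement in Christ's theorem together with the trivial inclusions between Euclidean balls and cubes, absorbing the dimensional factor $\sqrt n$ into $a_0$ and $a_1$.

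The main (and really only) obstacle is not a deep one: it is making sure the small-scale localization is handled cleanly, i.e., that nothing in the cited constructions secretly used the doubling property at large radii. Since~\cite{Diss} and~\cite{DeltaArg} are written precisely for this localized situation, the proof reduces to checking the hypotheses and transcribing the conclusions; I would therefore keep the proof short, essentially: ``Assumption~\ref{ass:Omega} implies~\eqref{eq:doubling}; now apply~\cite[Thm.~4.2.3, Lemma~4.2.5]{Diss} together with the rescaling in~\cite[Prop.~2.12]{DeltaArg}, converting balls to cubes in property~(4).'' If one wanted to be fully self-contained one could instead reproduce Christ's stopping-time construction with the radius capped at $1$, but given the available references that is unnecessary here.
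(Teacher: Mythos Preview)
Your proposal is correct and matches the paper's approach exactly: the paper does not include a formal proof of this proposition but rather states in the preceding paragraph that the result is obtained by combining \cite[Thm.~4.2.3, Lemma~4.2.5]{Diss} with the rescaling argument from \cite[Prop.~2.12]{DeltaArg}, which is precisely the citation-combination you outline. Your added remarks on verifying the local doubling from Assumption~\ref{ass:Omega} and on converting balls to cubes in property~(4) are reasonable fleshing-out of what the paper leaves implicit.
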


    \begin{definition}[Dyadic \enquote{cubes}]
    \label{eq:dyadic_cubes}
        In the setting of Proposition~\ref{prop:dyadic structure}, put $\Box_{2^{-k}} \coloneqq \{Q^k_\alpha\ :\ \alpha \in I_k \}$. Members of this collection are called \emph{dyadic \enquote{cubes}} of \emph{generation $2^{-k}$}. For $t \in (2^{-(k+1)},2^{-k}]$, define also $\Box_{t} \coloneqq \Box_{2^{-k}}$. In this case, the \emph{sidelength} of $Q \in \Box_{t}$ is $\ell(Q) \coloneqq 2^{-k}$. Finally, the collection $\Box$ of all (local) \emph{dyadic \enquote{cubes}} is given by $\bigcup_k \Box_{2^{-k}}$.
    \end{definition}

    \begin{definition}[Children and parents]
    \label{def:cube_family}
        Let $\ell \geq k$, $R \in \Box_{2^{-\ell}}$ and $Q \in \Box_{2^{-k}}$. If $R \subseteq Q$, then $R$ is called a \emph{child of $R$}, and $Q$ is called the \emph{(unique) parent cube} of $R$.
    \end{definition}

    With $R$ and $Q$ as in the preceding definition, Proposition~\ref{prop:dyadic structure}~(2) asserts that $R$ is either a child of $Q$ or disjoint to $Q$. Moreover, Proposition~\ref{prop:dyadic structure}~(3) asserts the existence of a parent cube to $R$. In conjunction with~(2), this parent cube has to be unique, which justifies our terminology of unique parent cubes in Definition~\ref{def:cube_family}.

    \begin{definition}[Double \enquote{cube}]
        If $Q \in \Box$, then
        define the \emph{doubled \enquote{cube}} $2Q$ via $2Q \coloneqq \{ x \in \bO \colon \dist(x, Q) < \ell(Q) \}$.
    \end{definition}

    Using property~(4) of dyadic \enquote{cubes}, interior thickness of $\bO$, and elementary geometric considerations, we find $|2Q| \lesssim |Q|$.

    \begin{remark}
        \label{rem:dyadic null}
        (1) and (2) in Proposition~\ref{prop:dyadic structure} imply that, outside of a null set $\mathfrak{N}$, each $x \in \bO \setminus \mathfrak{N}$ is uniquely contained in a \enquote{cube} $Q \in \Box_t$ for each $t\in (0,1]$.
    \end{remark}

    In the following definition, $\L^1_{\loc}(\bO)$ denotes the collection of measurable functions which are integrable over bounded subsets of $\bO$.

    \begin{definition}
        \label{def:dyadic averaging operator}
        For $t\in (0,1]$, $x \in \bO \setminus \mathfrak{N}$, and $u \in \L^1_{\mathrm{loc}}(\bO)$ define the \emph{dyadic averaging operator} by
            \begin{align}
                (\mathcal{A}_t u)(x) := \fint_{Q_{x,t}} u(y) \d y,
            \end{align}
        where $Q_{x,t} \in \Box_t$ is the unique \enquote{cube} containing $x$. For $F \in \L^1_{\mathrm{loc}}(\bO)^k$, via a minor abuse of notation, we define $\mathcal{A}_tF$ by component-wise action, that is
            \begin{align}
                (\mathcal{A}_t F)(x) := \bigl( (\mathcal{A}_t F_i)(x) \bigr)_{i=1}^k,
            \end{align}
        where $F = (F_i)_{i=1}^k$.
    \end{definition}
    We investigate some first properties.

    \begin{proposition}
        \label{prop:contraction}
        $\mathcal{A}_t^2 = \mathcal{A}_t$, and $\mathcal{A}_t$ is a contraction on $\L^2(\bO)$ for all $t \in (0,1]$.
    \end{proposition}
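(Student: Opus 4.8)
The plan is to prove the two assertions separately, starting with idempotency and then deducing the contraction bound.

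\textbf{Idempotency $\mathcal{A}_t^2 = \mathcal{A}_t$.} First I would reduce to the scalar case, since $\mathcal{A}_t$ acts componentwise on vector fields; so it suffices to show $\mathcal{A}_t^2 u = \mathcal{A}_t u$ for $u \in \L^1_{\loc}(\Omega)$. Fix $t \in (0,1]$ and a point $x \in \Omega \setminus \mathfrak{N}$, and let $Q_{x,t} \in \Box_t$ be the unique cube containing $x$ (which exists by Remark~\ref{rem:dyadic null}). The key observation is that the function $\mathcal{A}_t u$ is constant on $Q_{x,t}$: indeed, for almost every $y \in Q_{x,t}$ the unique cube of generation $t$ containing $y$ is again $Q_{x,t}$, so $\mathcal{A}_t u(y) = \fint_{Q_{x,t}} u = \mathcal{A}_t u(x)$. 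Therefore
\begin{align}
    \mathcal{A}_t^2 u(x) = \fint_{Q_{x,t}} \mathcal{A}_t u(y) \d y = \fint_{Q_{x,t}} \mathcal{A}_t u(x) \d y = \mathcal{A}_t u(x),
\end{align}
which gives the claim.

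\textbf{Contraction on $\L^2(\Omega)^n$.} Again it suffices to treat scalar $u \in \L^2(\Omega)$, the vector case following by summing over components. The cubes $\{Q : Q \in \Box_t\}$ partition $\Omega$ up to the null set $\mathfrak{N}$ by Proposition~\ref{prop:dyadic structure}~(1)--(2), and on each such cube $\mathcal{A}_t u$ equals the constant $\fint_Q u$. By Jensen's (or Cauchy--Schwarz's) inequality, $\abs{\fint_Q u}^2 \leq \fint_Q \abs{u}^2$, hence
\begin{align}
    \int_Q \abs{\mathcal{A}_t u}^2 = \abs{Q} \, \Big| \fint_Q u \Big|^2 \leq \abs{Q} \fint_Q \abs{u}^2 = \int_Q \abs{u}^2.
\end{align}
Summing over all $Q \in \Box_t$ and using that the cubes cover $\Omega$ up to a null set yields $\norm{\mathcal{A}_t u}_2^2 \leq \norm{u}_2^2$. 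In particular $\mathcal{A}_t$ maps $\L^2$ into $\L^2$ and is a contraction. One small point worth a remark: a priori $\mathcal{A}_t u$ is only defined pointwise on $\Omega \setminus \mathfrak{N}$ and measurability should be noted — it is measurable because it is constant on each member of the countable partition $\Box_t$.

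I do not expect a genuine obstacle here; the only thing requiring a moment's care is the reduction to the scalar case and the bookkeeping that the dyadic cubes of a fixed generation genuinely form a partition of $\Omega$ modulo a null set, which is exactly the content of Proposition~\ref{prop:dyadic structure}~(1) and~(2) together with Remark~\ref{rem:dyadic null}.
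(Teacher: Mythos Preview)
Your proof is correct and follows essentially the same approach as the paper: both arguments use that $\mathcal{A}_t u$ is constant on each cube of $\Box_t$ to get idempotency, and both obtain the contraction bound by decomposing $\Omega$ into the cubes of $\Box_t$, applying Jensen on each cube, and summing. Your version is slightly more explicit (reduction to the scalar case, the measurability remark), but there is no substantive difference.
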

    \begin{proof}
    	Fix $t\in (0,1]$.
        For the projection property, note that
        \begin{equation*}
            (\mathcal{A}_t^2 u)(x) = \fint_{Q_{x,t}}\left(\fint_{Q_{z,t}} u(y) \d y\right) \d z = \fint_{Q_{x,t}} (u)_{Q_{x,t}}\ \d z = (u)_{Q_{x,t}} = \mathcal{A}_tu(x),
        \end{equation*}
        where we used $Q_{z,t} = Q_{x,t}$ for $z \in Q_{x,t}$ in the second step.

        For the second claim, decomposing $\bO \setminus \mathfrak{N}$ into the dyadic cubes $\Box_t$ and applying Jensen's inequality, we see for $u \in \L^2(\bO)$ that
        \begin{align}
            \norm{\mathcal{A}_tu}^2_2 = \sum_{Q \in \Box_t}\int_{Q}\abs{\mathcal{A}_tu}^2 = \sum_{Q \in \Box_t}\abs{Q}\abs{\fint_Q u}^2 \leq \sum_{Q \in \Box_t}\abs{Q}\fint_Q \abs{u}^2 = \norm{u}^2_2. &\qedhere
        \end{align}
    \end{proof}
    The following is an important tool for later that can be found in~\cite[Thm 4.3]{Morris}.
    We will repeat the notion of a \emph{Carleson measure} at the beginning of Section~\ref{sec:carleson}.
	\begin{proposition}[Carleson's inequality]
	    \label{prop:dyadic Carleson}
        If $\sigma$ is a Carleson measure on $\bO \times (0,1]$ with Carleson norm $\norm{\sigma}_{\mathcal{C}}$, then
        \begin{equation*}
            \iint_{\bO \times (0,1]}\abs{\mathcal{A}_tu(x)}^2 \d \sigma(x,t) \les \norm{\sigma}_{\mathcal{C}}\norm{u}^2_2 \, .
        \end{equation*}
	\end{proposition}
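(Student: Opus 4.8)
The statement is a dyadic Carleson embedding theorem: for a Carleson measure $\nu$ on $\Omega \times (0,1]$ (meaning $\nu(\Q_\Omega(x;r) \times (0,r]) \lesssim |\Q_\Omega(x;r)|$ with the supremum over such "Carleson boxes" being $\|\nu\|_{\mathcal C}$), the averaging operator $\mathcal A_t$ embeds $\L^2(\Omega)$ into $\L^2(\nu)$. This is cited from Morris, so the task is really to explain how one reduces it to a known statement or reproves it by a standard stopping-time/good-$\lambda$ argument. The plan is to present both the reduction and the self-contained argument in a few lines.

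First I would recall that the dyadic structure from Proposition~\ref{prop:dyadic structure} together with Definition~\ref{def:dyadic averaging operator} identifies $\mathcal A_t u(x)$, for $t \in (2^{-(k+1)}, 2^{-k}]$, with the conditional expectation $\E[u \mid \mathcal F_k](x)$, where $\mathcal F_k$ is the $\sigma$-algebra generated by $\Box_{2^{-k}}$; this is exactly the content of Proposition~\ref{prop:contraction} (projection and contraction). Hence the left-hand side is
\begin{equation*}
    \sum_{k \ge 0} \int_{2^{-(k+1)}}^{2^{-k}} \sum_{Q \in \Box_{2^{-k}}} |(u)_Q|^2\, \nu(Q \times \{t\})\, \text{-type mass},
\end{equation*}
so after integrating $t$ out and setting $\nu_k(Q) \coloneqq \nu(Q \times (2^{-(k+1)}, 2^{-k}])$, it becomes $\sum_k \sum_{Q \in \Box_{2^{-k}}} |(u)_Q|^2\, \nu_k(Q)$. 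The Carleson condition transfers to: for every dyadic cube $R$, $\sum_{Q \subseteq R, Q \text{ dyadic}} \nu_k(Q) \lesssim \|\nu\|_{\mathcal C}\, |R|$, using property~(4) of Proposition~\ref{prop:dyadic structure} to compare $Q$ with a genuine cube $\Q(z^k_\alpha; a_1 2^{-k})$ and $n$-regularity of $\Omega$ (Assumption~\ref{ass:Omega}) to control $|\Q_\Omega(z;r)|$ from below by $r^n \approx |Q|$. This is the packing condition for a discrete Carleson measure on the dyadic tree.

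With this reduction, the statement is the classical dyadic Carleson embedding theorem on a doubling filtered measure space, which one proves by a stopping-time decomposition: fix $u \in \L^2(\Omega)$, $u \ge 0$ without loss of generality, and for each $\lambda$ of the form $2^j$ form the maximal dyadic cubes $Q$ with $(u)_Q > 2^j$; the corresponding "tents" are disjoint for fixed $j$ and nested across $j$, and the dyadic maximal function $M_d u$ controls $(u)_Q$. Summing $|(u)_Q|^2 \nu_k(Q)$ over the cubes trapped between consecutive stopping levels, using the packing condition to bound the $\nu$-mass of each stopping region by $\|\nu\|_{\mathcal C}$ times its Lebesgue measure, and then summing a geometric series in $j$, one arrives at $\lesssim \|\nu\|_{\mathcal C} \int_\Omega (M_d u)^2 \lesssim \|\nu\|_{\mathcal C}\|u\|_2^2$, where the last step is the $\L^2$-boundedness of the dyadic maximal operator (itself immediate from the weak-$(1,1)$ bound, or from Doob's inequality). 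Alternatively, and more cleanly, one can invoke Carleson's embedding theorem in the abstract martingale formulation directly, citing~\cite[Thm 4.3]{Morris}.

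The only genuine obstacle is bookkeeping at the interface between the continuous parameter $t$ and the discrete generations: one must make sure the "Carleson box" $\Q_\Omega(x;r) \times (0,r]$ used to define $\|\nu\|_{\mathcal C}$ really dominates the dyadic tent $\bigcup_{Q' \subseteq Q}\big(Q' \times (\ell(Q')/2, \ell(Q')]\big)$ up to a bounded dilation — this is where property~(4) (a dyadic cube sits between two honest cubes of comparable sidelength) and $n$-regularity enter, and where a fixed geometric constant absorbing factors like $a_1$ appears in the implicit constant. Everything else is the standard proof of the Carleson embedding theorem, so I would keep the write-up short, state the reduction to the discrete packing condition carefully, and then either run the three-line stopping-time argument or simply defer to Morris for the martingale embedding.
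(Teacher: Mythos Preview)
The paper does not actually prove this proposition: it is stated with the preamble ``The following is an important tool for later that can be found in~\cite[Thm 4.3]{Morris}'' and no proof is given. Your proposal therefore goes well beyond what the paper does. Your final option --- simply defer to Morris for the martingale embedding --- is exactly the route the paper takes.

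That said, your self-contained sketch is correct and is the standard argument. The reduction of the continuous-$t$ integral to the discrete sum $\sum_k \sum_{Q \in \Box_{2^{-k}}} |(u)_Q|^2\, \nu\bigl(Q \times (2^{-(k+1)}, 2^{-k}]\bigr)$ is immediate from Definition~\ref{def:dyadic averaging operator}, and the translation of the Carleson condition into the packing condition $\sum_{Q' \subseteq Q} \nu_{k(Q')}(Q') \lesssim \|\nu\|_{\mathcal C}\,|Q|$ via property~(4) of Proposition~\ref{prop:dyadic structure} and Assumption~\ref{ass:Omega} is right. Note that in this paper the Carleson boxes are already taken over dyadic ``cubes'' (see~\eqref{eq:def carleson measure} and the surrounding text), so the bookkeeping you flag as the ``only genuine obstacle'' is in fact lighter than you anticipate: the tent over a dyadic $Q$ is precisely $R(Q)$, and no dilation is needed to pass between the two. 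The stopping-time argument you outline (or equivalently Doob's maximal inequality for the dyadic martingale $(\mathcal A_{2^{-k}} u)_k$) then gives the conclusion.
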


    \section{A coercivity property}
	\label{sec:coercivity_property}

    Throughout this section, we suppose that $\bO \subseteq \R^n$ is interior thick and that $\bD \subseteq \partial \bO$ is closed and Ahlfors--David regular. In addition, we suppose that $\bO$ is locally uniform near $\partial \bO \setminus \bD$. The goal of this section is to prove the following result.

    \begin{proposition}[Coercivity of $-\Delta_{\bAr{D}} + 1$]
		\label{ass:laplace}
		There exists $\alpha \in (0,1]$ and a constant $c>0$ such that for every function $v \in \dom(-\Delta_{\bAr{D}} + 1)$ there holds the estimate
		\begin{align}
			 \| (-\Delta_\bAr{D} + 1)^\frac{1+\alpha}{2} v \|_2 \geq c \| T_{\bAr{D}} v \|_{[\L^2(\bO)^m, \IW^{1,2}_\bAr{D}(\bO)]_\alpha}.
		\end{align}
	\end{proposition}

    In the Euclidean setting, that is to say, when $\bO = \R^n$, one can take $\alpha = 1$. Indeed, this follows by using the Fourier transform. In~\cite{AKM}, the authors introduce an abstract condition similar to Proposition~\ref{ass:laplace} which they call (H8). For the Kato square root problem on $\R^n$, it amounts to showing that the domain of $-\Delta$ coincides with $\W^{2,2}(\R^n)$ as well as that the Hessian of a function $u \in \dom(-\Delta)$ is controlled by $-\Delta u$ in norm. On domains, the analogue to Proposition~\ref{ass:laplace} is~\cite[(H7)]{Laplace-Extrapolation}.

    The proof of Proposition~\ref{ass:laplace} will ocupy the rest of this section.

    \subsection{Fractional Sobolev spaces}

    We will use fractional Sobolev spaces of functions that vanish on a subset $\bD \subseteq \partial \bO$. The starting point for their construction are the fractional Sobolev spaces $\W^{s,2}(\R^n)$, which can be defined either as Bessel potential spaces or as Sobolev--Slobodeckij spaces if $s$ is not an integer, see~\cite{Triebel}.

    Since $\bD$ is a $(n-1)$-set, see Remark~\ref{Rem: AD} for this terminology, a version of the Lebesgue differentiation theorem allows us to define traces on $\bD$. The following is a weakened version of \cite[Thm.~VI.1]{JW} that suffices for our purpose.

    \begin{proposition}
    \label{prop: JW}
    Let $s \in (\frac{1}{2}, \frac{3}{2})$ and $u \in \W^{s,2} ( \R^n ) $. For $\cH^{n-1}$-almost every $x \in \bD$ the limit
    \begin{align*}
    (\Res_{\bD} u)(x) \coloneqq \lim_{r\to 0} \frac{1}{|\B(x,r)|} \int_{\B(x,r)} u(y) \d y
    \end{align*}
    exists. The restriction operator $\Res_{\bD}$ maps $\W^{s,2}  (\R^n) $ boundedly into $\L^2(\bD, \cH^{n-1})$.
    \end{proposition}

    With the trace operator at hand, we introduce the closed subspace $\W^{s,2}_{\bD}(\R^n)$ of $\W^{s,2}(\R^n)$ by
    \begin{align}
    \W^{s,2}_{\bD}(\R^n) \coloneqq \bigl\{u\in \W^{s,2}(\R^n) \colon \Res_{\bD} u = 0 \bigr\}.
    \end{align}
    In the case $s=1$, this notion is consistent with Definition~\ref{Def: W12D}, see for instance~\cite[Lem.~3.3]{BE}. Finally, we denote the distributional restriction to $\bO$ by $|_{\bO}$ and define fractional Sobolev spaces on $\bO$ by restriction.

    \begin{definition}
    \label{Def: spaces on open sets}
    Let $s \in [0,\frac{3}{2})$ and $t \in (\frac{1}{2}, \frac{3}{2})$. Put $\W^{s,2}(\bO) \coloneqq \{ u|_{\bO}: u\in \W^{s,2}(\R^n) \}$ and $\W^{t,2}_{\bD}(\bO) \coloneqq \{ u|_{\bO}: u\in \W^{t,2}_{\bD}(\R^n) \}$ and equip them with quotient norms.
    \end{definition}

    \begin{remark}
    \label{Rem: spaces on open sets}
    These spaces are again Hilbert spaces by construction as quotients of Hilbert spaces. Since $\bO$ is interior thick, we have that $\W^{t,2}_{\bD}(\bO)$ is a closed subspace of $\W^{t,2}(\bO)$ with an equivalent norm~\cite[Lem.~3.4]{BE}. As a cautionary tale, let us stress that in the context of this section $\W^{1,2}(\bO)$ is embedded into but possibly \emph{not} equal to the collection of all $u \in \L^2(\bO)$ with $\nabla u \in \L^2(\bO)^d$ and norm \eqref{intrinsic Sobolev norm}. However, as a consequence of Theorem~\ref{Thm: W12D extension}, the definition of $\W^{1,2}_{\bD}(\bO)$ above coincides with the original one from Definition~\ref{Def: W12D} up to equivalent norms when working in the geometric setting of Theorem~\ref{thm:main}.
    \end{remark}

    The (fractional) Sobolev spaces respect the following interpolation rule. The result was originally shown in~\cite{BE} under the additional assumption that $\partial \bO$ is a $(n-1)$-set. Subsequently, the boundary regularity could be relaxed to porosity in~\cite[Prop.~4.16]{BEH-Kato}.
    A reader who is not yet familiar with abstract interpolation theory can find further information in~\cite{Triebel}.

    \begin{proposition}[Interpolation identity]
        \label{Prop: Interpolation}
        For $s\in (0,1)$, one has the following interpolation identity
        \begin{align}
        [\L^2(\bO), \W^{1,2}_{\bD}(\bO)]_s = \begin{cases}
        \W^{s,2}(\bO) &(\text{if $s < 1/2$}) \\
        \W_{\bD}^{s,2}(\bO) &(\text{if $s > 1/2$})
        \end{cases}.
        \end{align}
    \end{proposition}

    \subsection{Identification of fractional power domains of the Laplacian}

    We recall an identification result from~\cite{BEH-Kato} that concerns certain fractional powers of $-\Delta_\bAr{D} + 1$. This property relies on the geometric quality of $\bO$ and $\bAr{D}$ and invokes deep results from potential theory and related fields.

    In analogy with the case $\alpha=1$, we employ the notation $\IW^{\alpha,2}_\bAr{D}(\bO) \coloneqq \otimes_{i=1}^m \W^{\alpha,2}_{\bD_i}(\bO)$, where $\bAr{D} = (\bD_i)_{i=1}^m$ is a given array of Ahlfors--David regular boundary parts.

    \begin{theorem}
        \label{Thm: Laplace}
        Under the geometric assumptions fixed at the beginning of this section, there exists $\eps \in (0,\frac{1}{2})$ such that the fractional power domains of $-\Delta_{\bAr{D}} + 1$ in $\L^2(\bO)^m$ are given by
        \begin{align*}
        \dom((-\Delta_{\bAr{D}} + 1)^{\frac{\alpha}{2}})
        = \begin{cases}
        \IW^{\alpha,2}_{\bAr{D}}(\bO) & \text{if $\alpha \in (\frac{1}{2},1+\eps)$,} \\
        \IW^{\alpha,2}(\bO) & \text{if $\alpha \in (0,\frac{1}{2})$}.
        \end{cases}
        \end{align*}
        The identity holds with equivalence of norms, and, by invertibility of $-\Delta_{\bAr{D}} + 1$, the graph norm can be replaced by the homogeneous graph norm.
    \end{theorem}

    The proof of the preceding result can be found in~\cite[Thm.~1.2]{BEH-Kato}.

    \subsection{Proof of Proposition~\ref{ass:laplace}}

    Eventually, we can give the proof of the main result of this section.

    \begin{proof}[Proof of Proposition~\ref{ass:laplace}]
        Set $\alpha = \nicefrac{\eps}{2}$, where $\eps$ is introduced in Theorem~\ref{Thm: Laplace}. Note that $\alpha < \nicefrac{1}{2}$.
        Now, let $v \in \dom(-\Delta_\bAr{D} + 1)$. Using Proposition~\ref{Prop: Interpolation} and the definition of $T = T_\bAr{D}$, calculate
        \begin{align}
            \| T v \|_{[\L^2(\bO)^m, \IW^{1,2}_\bAr{D}(\bO)]_\alpha}^2 \approx \| T v \|_{\IW^{\alpha,2}_\bAr{D}(\bO)}^2
            = \| v \|_{\IW^{\alpha,2}_\bAr{D}(\bO)}^2 + \| \nabla v \|_{\IW^{\alpha,2}_\bAr{D}(\bO)}^2.
        \end{align}
        By definition of the scale of spaces $\W^{s,2}(\bO)$, the gradient maps $\W^{1+\alpha,2}(\bO) \to \W^{\alpha,2}(\bO)$ (see~\cite[Lem.~4.10]{BEH-Kato} for details). Hence, taking Remark~\ref{Rem: spaces on open sets} and $1 + \alpha \in [1, 1 + \nicefrac{1}{2})$ into account, we continue by
        \begin{align}
            \| T v \|_{[\L^2(\bO)^m, \IW^{1,2}_{\bAr{D}}(\bO)]_\alpha} \lesssim \| v \|_{\IW^{1+\alpha,2}(\bO)} \approx \| v \|_{\IW^{1+\alpha,2}_\bAr{D}(\bO)}.
        \end{align}
        Eventually, Theorem~\ref{Thm: Laplace} lets us conclude
        \begin{align}
            \| T v \|_{[\L^2(\bO)^m, \IW^{1,2}_\bAr{D}(\bO)]_\alpha} \lesssim \| (-\Delta_\bAr{D} + 1)^\frac{1+\alpha}{2} v \|_2. &\qedhere
        \end{align}
    \end{proof}

	\section{Reduction to a quadratic estimate}
	\label{sec:reduction}

    In Sections~\ref{sec:reduction}-\ref{sec:carleson}, we are going to prove our main result, Theorem~\ref{thm:main}, under the additional assumption that $\bO$ is interior thick. This statement is equivalent to the validity of certain quadratic estimates. The argument is well-known in the literature (see~\cite[Chap. 13]{ISEM} for instance), but we are going to briefly recap on it in this section for the reader's convenience. Eventually, we will remove the interior thickness condition in Section~\ref{sec:no_thickness} to complete the proof of Theorem~\ref{thm:main}.
    
    The contents of this section are still valid without the interior thickness condition, so we will not yet employ the bold letter convention (Convention~\ref{conv:bold}) here. However, interior thickness will be crucial in the verification of the quadratic estimate~\eqref{eq:QE} appearing in the following proposition.

	\begin{proposition}[Reduction to a quadratic estimate]
    \label{prop:reduc to QE}
		In the situation of Theorem~\ref{thm:main}, the validity of Kato's square root property is equivalent to the quadratic estimate
		\begin{align}
			\label{eq:QE}
			\tag{QE}
			\int_0^\infty \| tL (1 + t^2 L)^{-1} u\|_2^2 \ddt \lesssim \norm{ u }^2_{\IW^{1,2}(O)}, \qquad u \in \IW^{1,2}_\Ar{D}(O),
		\end{align}
		for every elliptic system in divergence form $L$ on $O$ subject to Dirichlet boundary conditions in $\Ar{D} \subseteq (\bd O)^m$
	\end{proposition}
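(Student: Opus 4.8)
The plan is to follow the standard reduction from the Kato square root estimate to a quadratic estimate, exactly as carried out in the 13th lecture of~\cite{ISEM}, observing that no step there uses $\Omega = \R^n$. First I would recall that, since $L = -\Div_0 A \nabla_0$ is $m$-sectorial with a bounded $\H^\infty$-calculus being the goal, the key equivalence is between the Kato estimate $\| L^{1/2} u \|_2 \approx \| \nabla_0 u \|_2$ and the square function bound $\int_0^\infty \| \psi(t^2 L) u \|_2^2 \ddt \approx \| \nabla_0 u \|_2^2$ for a suitable non-degenerate $\psi \in \H_0^\infty(\Sec_\varphi)$; the particular choice $\psi(z) = z(1+z)^{-1}$ gives precisely the operator $tL(1+t^2L)^{-1}$ appearing in~\eqref{eq:QE}.

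The argument has two directions. For the ``easy'' direction (Kato implies \eqref{eq:QE}), I would use that $L$ always satisfies the square function estimate $\int_0^\infty \| tL(1+t^2L)^{-1} v \|_2^2 \ddt \approx \| L^{1/2} v \|_2^2$ for $v \in \dom(L^{1/2})$ --- this is the standard quadratic estimate for the sectorial operator $L$ with bounded $\H^\infty$-calculus on $\L^2$, valid because $L$ is $m$-accretive and self-adjointness of the real part is not needed; it follows from McIntosh's theorem as recorded in~\cite{ISEM}. Then $\H^1_0(\Omega) \subseteq \dom(L^{1/2})$ together with $\| L^{1/2} u\|_2 \lesssim \| \nabla_0 u \|_2$ gives~\eqref{eq:QE}. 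For the converse (\eqref{eq:QE} for every such $L$ implies Kato), the trick is the one from~\cite{ISEM}: one applies the assumed quadratic estimate not only to $L$ but also to the adjoint $L^* = -\Div_0 A^* \nabla_0$, which is again an elliptic operator in divergence form subject to Dirichlet conditions, hence also covered by the hypothesis. Combining the square function bound for $L$ with the one for $L^*$ and a duality/polarization argument (as in~\cite{ISEM}, using the sectoriality of $L$ and the reproducing formula $\int_0^\infty \psi(t^2 L)\tilde\psi(t^2 L)\, \tfrac{\d t}{t} = c\,\mathrm{id}$ on $\overline{\ran L}$) upgrades the one-sided estimate to the two-sided equivalence $\| L^{1/2} u \|_2 \approx \| \nabla_0 u \|_2$, and in particular yields the domain equality $\dom(L^{1/2}) = \H^1_0(\Omega)$.

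Concretely, I would structure it as: (i) recall that $\|L^{1/2} u\|_2 \lesssim \|\nabla_0 u\|_2$ always holds (this is the ``trivial'' half, coming from boundedness of $\mathcal{L}\colon \H^1_0 \to (\H^1_0)^*$ and interpolation/form-domain considerations, and is already available from~\cite{ISEM}); (ii) recall McIntosh-type quadratic estimates for $L$ and $L^*$ on $\L^2(\Omega)$; (iii) show $\eqref{eq:QE} \Rightarrow \dom(L^{1/2}) = \H^1_0$ and the reverse inequality $\|\nabla_0 u\|_2 \lesssim \|L^{1/2}u\|_2$ via the adjoint-and-duality argument; (iv) conversely, Kato $\Rightarrow \eqref{eq:QE}$ by combining (i) with (ii). The main obstacle --- or rather, the only thing one must genuinely check --- is that every ingredient in the~\cite{ISEM} proof is insensitive to the geometry of $\Omega$: that $L$, $L^*$ are $m$-sectorial with bounded holomorphic calculus on $\L^2(\Omega)$, that $\H^1_0(\Omega)$ is exactly the form domain, and that the duality pairing used in the polarization step makes sense with $\nabla_0$ and its adjoint $-\Div_0$ on the open set. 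Since all of these are purely operator-theoretic facts about forms on Hilbert spaces and are recalled in Section~\ref{sec:elliptic}, the proof reduces to citing~\cite[Lecture~13]{ISEM} essentially verbatim.
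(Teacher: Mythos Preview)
Your approach is exactly what the paper does: the paper gives no proof of its own and simply cites the 13th lecture of~\cite{ISEM}, noting (implicitly) that the argument there is purely operator-theoretic and independent of $\Omega=\R^n$. Your plan to verify that each ingredient in that lecture---$m$-sectoriality of $L$ and $L^*$, the McIntosh quadratic estimates on $\L^2(\Omega)$, the form-domain identification, and the duality step---survives on an open set is precisely the intended content of the citation.

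One small correction to your sketch: your point (i), that $\|L^{1/2}u\|_2 \lesssim \|\nabla_0 u\|_2$ is the ``trivial half'' following from boundedness of $\mathcal{L}\colon \H^1_0 \to (\H^1_0)^*$ and interpolation, is not accurate. For non-self-adjoint $L$ neither inequality in Kato's estimate is free; in particular $\dom(L^{1/2})\supseteq \H^1_0(\Omega)$ is not automatic from form considerations alone. The actual mechanism in~\cite{ISEM} is that~\eqref{eq:QE} for $L$ yields $\|L^{1/2}u\|_2 \lesssim \|\nabla_0 u\|_2$ directly (via the resolution of the identity and the specific choice $\psi(z)=z(1+z)^{-1}$), and then~\eqref{eq:QE} applied to $L^*$ together with the duality $(L^{1/2})^*=(L^*)^{1/2}$ yields the reverse bound. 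Your points (ii)--(iv) already contain this correctly, so the overall plan stands; just drop the claim that one direction is available a priori.
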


	Hence, instead of showing Theorem~\ref{thm:main} right away, we are going to establish~\eqref{eq:QE} in the following two sections.

    As in~\cite{ISEM}, the quadratic estimate~\eqref{eq:QE} can be reformulated in terms of the bounded operator $ \mathcal{L} : \IW^{1,2}_\Ar{D} (O) \to ( \IW^{1,2}_\Ar{D} (O) )^* $.
    More precisely, writing
    \begin{equation*}
        \Theta_t = t ( 1+ t^2 \mathcal{L} )^{-1} T_\Ar{D}^* B
    \end{equation*}
    with $T_D^*$ in the \enquote{very weak formulation} and keeping $\norm{ u }^2_{\IW^{1,2}(O)} = \normm{T_\Ar{D} u}_2^2$ in mind,~\eqref{eq:QE}~takes the form
    \begin{align}
    \label{eq:Theta_t QE}
        \int_0^\infty \norm{ \Theta_t T_\Ar{D} u }^2_2
        \ddt \les \norm{T_\Ar{D} u}_2^2 \, ,
        \qquad u \in \IW^{1,2}_\Ar{D} (O) \, .
    \end{align}

    \begin{lemma}
        \label{lemma:Theta odes}
        The family $ \{ \Theta_t \}_{ t>0 } $ is uniformly bounded in $ \mathcal{L} ( \L^2 (O)^N , \L^2 (O)^m ) $ and satisfies $\L^2$ off-diagonal estimates.
    \end{lemma}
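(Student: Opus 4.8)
The plan is to obtain both statements from the factorisation $\Theta_t = \bigl(-t(1+t^2\mathcal L)^{-1}\Div\bigr)\circ A$ together with the properties already established. First I would deal with uniform boundedness. By Lemma~\ref{lem:adjvoint of nabla res}, the family $\{-t(1+t^2\mathcal L)^{-1}\Div\}_{t>0}$ is the adjoint family of $\{t\nabla(1+t^2 L)^{-1}\}_{t>0}$, which is uniformly bounded in $\mathcal L(\L^2(\Omega),\L^2(\Omega)^n)$; hence the adjoint family is uniformly bounded in $\mathcal L(\L^2(\Omega)^n,\L^2(\Omega))$ with the same constant. Since $A\in\L^\infty$ acts as a bounded multiplication operator on $\L^2(\Omega)^n$ with norm $\|A\|_\infty$, composing gives $\sup_{t>0}\|\Theta_t\|_{\mathcal L(\L^2(\Omega)^n,\L^2(\Omega))}\le \|A\|_\infty\sup_{t>0}\|t\nabla(1+t^2L)^{-1}\|_{\mathcal L(\L^2)}<\infty$.

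For the off-diagonal estimates I would again use the factorisation and the fact that multiplication by $A$ is local. Fix measurable $E,F\subseteq\Omega$ and $F\in\L^2(\Omega)^n$ supported in $F$ (abusing notation for the set and the field). Then $\one_F$ commutes with multiplication by $A$, so $\one_E\Theta_t\one_F = \one_E\bigl(-t(1+t^2\mathcal L)^{-1}\Div\bigr)\one_F\,(A\one_F)$, and since $\|A\one_F g\|_2\le\|A\|_\infty\|\one_F g\|_2$ it suffices to have off-diagonal decay for the adjoint family $\{-t(1+t^2\mathcal L)^{-1}\Div\}_{t>0}$. But this is exactly the content of the last sentence of Proposition~\ref{prop:ode}: the family $\{t(1+t^2\mathcal L)^{-1}\Div\}_{t>0}$ satisfies $\L^2$ off-diagonal estimates, with constants $c,C>0$ so that $\|\one_E t(1+t^2\mathcal L)^{-1}\Div\one_F g\|_2\le C\e^{-c\dist(E,F)/t}\|\one_F g\|_2$. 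Combining, $\|\one_E\Theta_t\one_F F\|_2\le C\|A\|_\infty\e^{-c\dist(E,F)/t}\|\one_F F\|_2$, which is the desired estimate (with the constant $C$ relabelled).

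I do not expect any genuine obstacle here: the lemma is a bookkeeping consequence of Lemma~\ref{lem:adjvoint of nabla res} and Proposition~\ref{prop:ode}, the only points requiring a word of care being that multiplication by the (matrix-valued, bounded) coefficient $A$ preserves both $\L^2$-boundedness and support, so it neither spoils the uniform bound nor the exponential off-diagonal decay. If one prefers to avoid quoting the adjoint off-diagonal statement, one could instead note that $\Theta_t^* = t\nabla(1+t^2 L)^{-1}$ precomposed with $A^*$, derive the estimates for $\Theta_t^*$ directly from Proposition~\ref{prop:ode}, and dualise, using that $\L^2$ off-diagonal estimates are stable under taking adjoints (swapping the roles of $E$ and $F$, which leaves $\dist(E,F)$ unchanged).
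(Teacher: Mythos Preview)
Your proposal is correct and follows essentially the same approach as the paper: the paper's proof is a single sentence noting that since multiplication by $A$ is bounded on $\L^2(\Omega)^n$ and preserves supports, the claim is a direct consequence of Proposition~\ref{prop:ode}, which is precisely the factorisation argument you spell out in detail. One minor quibble with your closing alternative: the adjoint $\Theta_t^*$ involves $L^*$ (the operator with coefficient $A^*$) rather than $L$, but since Proposition~\ref{prop:ode} applies to any elliptic operator in divergence form this does not affect the argument.
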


    \begin{proof}
        Since the coefficient matrix $ B $ acts as a bounded multiplication operator on $ \L^2 (O)^N $ and preserves the support, the claim is a direct consequence of Proposition~\ref{prop:ode}.
    \end{proof}

	\section{Principal part approximation}
	\label{sec:principal part approximation}
	
	Throughout this section, we suppose that the pair $(\bO, \bAr{D})$ satisfies the geometric assumptions from Theorem~\ref{thm:main} as well as that $\bO$ is interior thick. Recall Convention~\ref{conv:bold} on the usage of bold symbols. The goal of this section is to reduce the validity of~\eqref{eq:QE} to a Carleson measure estimate. The latter will be verified in Section~\ref{sec:carleson}.

	\subsection{Presentation of the relevant operators}
	\label{sec:principal part definitions}

	It is well-known that, owing to the off-diagonal estimates from Lemma~\ref{lemma:Theta odes}, $\bTheta_t$ extends to a bounded operator $\L^\infty(\bO)^N \to \L^2_{\loc}(\bO)^m $ for all $t$, see for instance~\cite[Prop.~11.12 \& Def.~11.14]{ISEM}. We exploit this fact in the subsequent definition of the principal part.

	\begin{definition}[Principal part]
		\label{def:principal part}
        Identifying the standard unit vectors $ e_1 , e_2, \dots, e_N \in \C^N $ with their respective constant functions on $\bO$, we define for $t \in (0,1]$ the \emph{principal part} of $\bTheta_t$ by
            \begin{align}
            \bgamma_t := ( \bTheta_t(e_1) , \ldots , \bTheta_t ( e_N) )
            \in \L^2_{\mathrm{loc}}(\bO , \C^{ m \times N})
            \end{align}
	\end{definition}
    \begin{remark}
        \label{rem:def of gamma}
        For almost every $ x \in \bO $, the principal part $ \bgamma_t (x) $ is a linear map from $ \C^N $ to $ \C^m $ acting as
        \begin{align}
            \bgamma_t (x) w = \bTheta_t ( w \one_{  \R^n } ) (x)
            \, , \quad w \in \C^N \, .
        \end{align}
        In particular, for $ F \in \L^2 ( \bO )^N $, $ t \in (0,1] $ and a dyadic cube $ Q \in \Box_t $ we have
        \begin{align}
            \label{eq:formula for gamma A}
            \one_Q ( \bTheta_t - \bgamma_t \mathcal{A}_t ) F
            = \one_Q \bTheta_t ( F - (F)_Q ) \, .
        \end{align}
    \end{remark}

	\begin{definition}[Smoothing operators]
		\label{def:smooting operators}
        For $ t>0 $, we define the smoothing operators
        \begin{equation*}
            \bP_t = ( 1+ t^2 M_\bAr{D} )^{-1} \in \mathcal{L}
            ( \L^2 ( \bO )^N ) \ ,
            \quad
            \bQ_t = t T^*_\bAr{D} \bP_t
            \in \mathcal{L} ( \L^2 ( \bO )^N ,
            \L^2 ( \bO )^m ) \, .
        \end{equation*}
	\end{definition}

   A direct consequence of Proposition~\ref{prop:M and Delta} with the function $ f : z \to ( 1+ t^2 z )^{-1} $ is the commuting relation
\begin{equation}
    \label{eq:intertwining of res}
    \bP_t T_\bAr{D} = T_\bAr{D} ( 1 + t^2 (-\Delta_\bAr{D} + 1 ) )^{-1} \, .
\end{equation}

    \begin{lemma}
        \label{lemma:properties of smoothing operators}
        The families $ \{ \bP_t \} $ and $ \{ \bQ_t \} $ are uniformly bounded in $ \mathcal{L} ( \L^2 ( \bO )^N ) $ and $ \mathcal{L} ( \L^2 ( \bO )^N , \L^2 ( \bO )^m ) $, respectively.
        Thus, also the adjoint family $ \{ \bQ_t^* \} $ is uniformly bounded in $ \mathcal{L} ( \L^2 ( \bO )^m , \L^2 ( \bO )^N ) $, and $ \bQ_t^* $ is given by the bounded extension of $ t \bP_t T_\bAr{D} $ to $ \L^2 ( \bO )^m $.
    \end{lemma}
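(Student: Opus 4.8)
The plan is to treat $P_t$, $Q_t$, and the adjoint family $\{Q_t^*\}$ in turn, using nothing beyond the self-adjointness and $m$-accretivity of $M$ from Proposition~\ref{prop: M is self-adjoint} and the sesquilinear form $\mathfrak{b}$ associated with it. For $P_t = (1+t^2M)^{-1}$ there is essentially nothing to do: since $M$ is self-adjoint with spectrum in $[0,\infty)$, the spectral theorem (equivalently, contractivity of resolvents of an $m$-accretive operator) gives $\|P_t\| \le 1$ for all $t>0$, so $\{P_t\}$ is uniformly bounded on $\L^2(\Omega)^n$.

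For $Q_t = -t\Div_0(1+t^2M)^{-1}$ the point is that $(1+t^2M)^{-1}$ maps $\L^2(\Omega)^n$ into $\dom(M) \subseteq \dom(\Div_0) = \dom(\mathfrak{b})$, so $Q_t$ is everywhere defined, and for $F \in \L^2(\Omega)^n$, putting $G := (1+t^2M)^{-1}F \in \dom(M)$, one computes
\[
\|Q_t F\|_2^2 = t^2\|\Div_0 G\|_2^2 = t^2\,\mathfrak{b}(G,G) = \big\langle t^2 M(1+t^2M)^{-1}F,\,(1+t^2M)^{-1}F\big\rangle .
\]
The spectral theorem then finishes the job: both $t^2M(1+t^2M)^{-1} = 1 - (1+t^2M)^{-1}$ and $(1+t^2M)^{-1}$ are contractions, whence $\|Q_t F\|_2 \le \|F\|_2$ (in fact $\le \tfrac12\|F\|_2$, using $\sup_{\lambda\ge0}\lambda t^2(1+\lambda t^2)^{-2} = \tfrac14$), so $\{Q_t\}$ is uniformly bounded in $\mathcal{L}(\L^2(\Omega)^n,\L^2(\Omega))$. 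One could instead invoke the Kato identity $\|\Div_0 F\|_2 = \|\sqrt{M}F\|_2$ and bound $t\sqrt M(1+t^2M)^{-1}$ directly by functional calculus, but routing through $\mathfrak{b}$ keeps $\sqrt{M}$ out of sight.

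Uniform boundedness of $\{Q_t^*\}$ in $\mathcal{L}(\L^2(\Omega),\L^2(\Omega)^n)$ is then free, since $\|Q_t^*\| = \|Q_t\|$, so only the identification of $Q_t^*$ remains. Using $-\Div_0 = \nabla_0^*$ from Definition~\ref{def:nabla and div}, I would rewrite $Q_t = t\nabla_0^*(1+t^2M)^{-1}$ as the composition of the closed, densely defined operator $t\nabla_0^*$ with the bounded, self-adjoint, everywhere defined operator $(1+t^2M)^{-1}$. From the definition of the adjoint one checks the inclusion $Q_t^* = \big(t\nabla_0^*(1+t^2M)^{-1}\big)^* \supseteq (1+t^2M)^{-1}(t\nabla_0^*)^*$, and since $\nabla_0$ is closed, $(t\nabla_0^*)^* = t\nabla_0$; hence $Q_t^*$ extends $t(1+t^2M)^{-1}\nabla_0$, which is densely defined on $\dom(\nabla_0) = \H^1_0(\Omega)$. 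As $Q_t^*$ is bounded and everywhere defined, and the restriction of a bounded operator to a dense subspace is again bounded, $Q_t^*$ must be the unique bounded extension of $t(1+t^2M)^{-1}\nabla_0$ to $\L^2(\Omega)$, as claimed. The one step calling for a little care is precisely this last one: the inclusion for the adjoint of a product has to be argued from the definition of the adjoint, since the identity $(AB)^* = B^*A^*$ is at hand only when $A$ is bounded, not when merely $B$ is.
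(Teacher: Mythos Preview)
Your proof is correct and follows essentially the same approach as the paper: $m$-accretivity (or the spectral theorem) for $\{P_t\}$, the form $\mathfrak{b}$ for $\{Q_t\}$, and identification of $Q_t^*$ via the adjoint relation $-\Div_0 = \nabla_0^*$ together with self-adjointness of $P_t$. The only cosmetic difference is that the paper computes the adjoint by a direct inner-product identity $\langle Q_t^* u, F\rangle = -\langle u, t\Div_0 P_t F\rangle = \langle tP_t\nabla_0 u, F\rangle$ for $u\in\H^1_0(\Omega)$, whereas you phrase the same calculation via the abstract inclusion $(AB)^* \supseteq B^*A^*$; both arrive at the same conclusion.
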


    \begin{proof}
        Uniform boundedness of $ \{ \bP_t \} $ is by $ m $-accretivity of $ M_\bAr{D} $,
        and for $ \{ \bQ_t \} $ one can use the sesquilinear form $\boldsymbol{\mathfrak{b}}$, see the proof of Lemma~\ref{lem:M perturbation} or~\cite[Lem.~11.7]{ISEM}.
    \end{proof}

    \begin{lemma}
        \label{lemma:qe for Qt}
        For $ F \in \L^2 ( \bO )^N $, one has
        \begin{equation*}
            \int_0^\infty \norm{ \bQ_t F }^2_2
            \ddt \les
            \norm{F}^2_2 \, .
        \end{equation*}
        If in addition $ F \in \ran(T_\bAr{D}) $, then we have the reproducing formula
        \begin{equation*}
            2 \int_0^\infty \bQ_t^* \bQ_t F \ddt = F \, .
        \end{equation*}
    \end{lemma}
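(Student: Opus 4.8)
Both assertions follow from the fact, recorded in Proposition~\ref{prop: M is self-adjoint}, that $M$ is self-adjoint, $m$-accretive (hence sectorial of angle zero) and has a bounded $\H^\infty$-calculus on $\cl{\ran(M)}$, together with the Kato-type identity $\normm{\sqrt M F}_2 = \norm{\Div_0 F}_2$. The key observation is that $Q_t = -t\Div_0(1+t^2M)^{-1}$ can be rewritten through the square root of $M$. Indeed, for $F \in \L^2(\Omega)^n$ one has $(1+t^2M)^{-1}F \in \dom(M) \subseteq \dom(\sqrt M) = \dom(\Div_0)$, and using $\normm{\sqrt M G}_2 = \norm{\Div_0 G}_2$ with $G = (1+t^2M)^{-1}F$ gives
\begin{align}
    \norm{Q_t F}_2 = t\norm{\Div_0 (1+t^2M)^{-1}F}_2 = t\normm{\sqrt M (1+t^2M)^{-1}F}_2 = \normm{\psi_t(M)F}_2,
\end{align}
where $\psi_t(z) = t\sqrt z\,(1+t^2z)^{-1} = \psi(t^2 z)$ with $\psi(z) = \sqrt z/(1+z) \in \H_0^\infty(\Sec_\varphi)$ for any $\varphi \in (0,\pi)$ (it decays like $|z|^{1/2}$ at $0$ and like $|z|^{-1/2}$ at $\infty$).

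\textbf{The square function estimate.} Having written $\norm{Q_t F}_2 = \normm{\psi(t^2M)F}_2$, the bound $\int_0^\infty \norm{Q_t F}_2^2 \ddt \les \norm{F}_2^2$ is precisely the standard quadratic estimate
\begin{align}
    \int_0^\infty \normm{\psi(t^2 M)F}_2^2 \ddt \les \norm{F}_2^2,
\end{align}
which holds for any nonzero $\psi \in \H_0^\infty$ and any operator with a bounded $\H^\infty$-calculus on its range; this is exactly the statement already invoked in Proposition~\ref{prop: M is self-adjoint} ($M$ "satisfies quadratic estimates"), and it is the content of~\cite[Ch.~7--8]{ISEM}. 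One only has to note that the part of $F$ lying in $\ker(M) = \ker(\Div_0)$ contributes zero to both sides (the left side because $\psi(t^2M)$ annihilates $\cl{\ker M}$, using the orthogonal decomposition $\L^2(\Omega)^n = \cl{\ran M} \oplus \ker M$ valid by self-adjointness), so it suffices to argue on $\cl{\ran M}$ where the $\H^\infty$-calculus is bounded.

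\textbf{The reproducing formula.} For $F \in \cl{\ran(M)}$, the Calderón-type reproducing formula $2\int_0^\infty Q_t^*Q_t F \ddt = F$ is obtained by the usual convergence-of-the-functional-calculus argument: by Lemma~\ref{lemma:properties of smoothing operators}, $Q_t^* Q_t = \psi(t^2M)^*\psi(t^2M) = \cl\psi(t^2M)\psi(t^2M)$, and since $M$ is self-adjoint with $\psi$ real on $[0,\infty)$ this equals $(\psi^2)(t^2M)$ with $\psi^2(z) = z/(1+z)^2$, i.e.\ $Q_t^* Q_t = \phi(t^2 M)$ where $\phi(z) = z(1+z)^{-2}$ is the standard regularizer $e(z)$ from~\cite[Rem.~5.11]{ISEM}. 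The elementary scalar identity $2\int_0^\infty \phi(t^2 z)\ddt = 2\int_0^\infty \phi(s)\frac{\d s}{s} = 1$ for every $z \in (0,\infty)$, combined with the convergence lemma for the $\H^\infty$-calculus (McIntosh's approximation theorem, see~\cite[Lec.~8]{ISEM}), upgrades this to the operator identity $2\int_0^\infty \phi(t^2M)F\ddt = F$ for $F \in \cl{\ran M}$, the integral converging as an improper Riemann integral in $\L^2(\Omega)^n$. For general $F \in \L^2(\Omega)^n$, write $F = F_0 + F_1$ with $F_0 \in \ker(\Div_0)$ and $F_1 \in \cl{\ran M}$; then $Q_t F_0 = -t\Div_0(1+t^2M)^{-1}F_0 = 0$ since $(1+t^2M)^{-1}$ preserves $\ker(\Div_0) = \ker M$ and $\Div_0$ vanishes there, so the formula as literally stated holds only on $\cl{\ran M}$ — I would state it for $F \in \cl{\ran M} = \cl{\ran(\Div_0^*)}$ (equivalently, quotient out the kernel), matching the role this formula plays later.

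\textbf{Main obstacle.} The only genuine point requiring care is the bookkeeping around the kernel of $M$: one must be consistent about whether $Q_t^*Q_t$ reproduces $F$ itself or only its projection onto $\cl{\ran M}$, and about the fact that $Q_t^*$ from Lemma~\ref{lemma:properties of smoothing operators} is the bounded extension of $t(1+t^2M)^{-1}\nabla_0$, not literally an operator built from $\psi(M)$ on all of $\L^2(\Omega)$. Identifying $Q_t^*Q_t$ with $\phi(t^2M)$ on $\cl{\ran M}$ resolves this, and everything else is the standard $\H^\infty$-calculus machinery already available from~\cite{ISEM} by Proposition~\ref{prop: M is self-adjoint}.
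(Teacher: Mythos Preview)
Your approach is essentially the same as the paper's: both rewrite $\norm{Q_tF}_2$ via the Kato identity $\normm{\sqrt{M}G}_2=\norm{\Div_0 G}_2$ as $\normm{\psi(t^2M)F}_2$ with $\psi(z)=\sqrt z/(1+z)$ and invoke quadratic estimates for $M$, and both identify $Q_t^*Q_t$ with $f(t^2M)$ for $f(z)=z/(1+z)^2$ and appeal to the Calder\'on reproducing formula. The paper reaches the identity $Q_t^*Q_t=t^2M(1+t^2M)^{-2}$ by the slightly more direct route of composing the operator formulas for $Q_t^*$ and $Q_t$ (using that $Q_t$ maps into $\dom(\nabla_0)$), rather than via $\psi(t^2M)^*\psi(t^2M)$.

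Your observation that the reproducing formula, as literally stated for all $F\in\L^2(\Omega)^n$, only reproduces the projection of $F$ onto $\cl{\ran M}$ is correct and is a point the paper's proof does not address. In the paper this causes no harm because the formula is only ever applied with $F=\nabla_0 u$, and under Assumption~\ref{ass:poincare} one has $\nabla_0 u\in\ran(M)$ (this is shown in the proof of Proposition~\ref{prop:M and Delta}). So your bookkeeping is more careful than the paper's, but the underlying argument is the same.
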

    \begin{proof}
        Owing to Proposition~\ref{prop: M is self-adjoint}, the square root property for the self-adjoint operator $ M_\bAr{D} $ gives
        \begin{equation*}
            \norm{ \bQ_t F }_2 = \normm{ t T^*_\bAr{D}
            (1+t^2  M_\bAr{D} )^{-1} F }_2
            = \normm{ \sqrt{ t^2 M_\bAr{D} }
            (1+t^2 M_\bAr{D} )^{-1} F }_2 \, .
        \end{equation*}
        Hence, the first assertion follows by quadratic estimates for $ M_\bAr{D} $.

        For the reproducing formula, note that $ \bQ_t $ maps into $ \dom ( T_\bAr{D} ) $, so we can compute
        \begin{equation*}
            \bQ_t^* \bQ_t
            = t^2 (1+t^2 M_\bAr{D} )^{-1} T_\bAr{D} T^*_\bAr{D}
            (1+t^2  M_\bAr{D} )^{-1}
            = t^2 M_\bAr{D} (1+t^2  M_\bAr{D} )^{-2}
            = f( t^2 M_\bAr{D} ) \, ,
        \end{equation*}
        where $ f (z) = z (1+z)^{-2} $.
        By Remark~\ref{rem: T* is surj} we have $ \ran(T_\bAr{D}) = \ran(M_\bAr{D}) $,
        so the Calderón reproducing formula~\cite[Thm.~6.16]{ISEM} yields the assertion.
    \end{proof}

    With these operators, we first split the left-hand side of~\eqref{eq:QE} as
    \begin{equation}
        \label{eq:split quadratic estimates integral}
        \int_0^\infty \norm{ \bTheta_t T_\bAr{D} u }^2_2 \ddt
        \les \int_0^\infty \norm{ \bTheta_t \bP_t T_\bAr{D} u }^2_2
        \ddt + \int_0^\infty \norm{ \bTheta_t
        ( 1- \bP_t ) T_\bAr{D} u }^2_2 \ddt \, .
    \end{equation}

    The second term on the right-hand side can be controlled via a simple algebraic manipulation.

    \begin{proposition}
        \label{prop:QE for Theta (1-P) nabla u}
        For $ F \in \L^2 ( \bO )^N $, there holds the quadratic estimate
        \begin{equation*}
            \int_0^\infty \norm{ \bTheta_t ( 1- \bP_t ) F }^2_2
            \ddt \les \norm{F}^2_2 \, .
        \end{equation*}
    \end{proposition}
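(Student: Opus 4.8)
The key point is that the factor $1-P_t$ produces the correct scaling to absorb the singularity of $\Theta_t$ at $t=0$, once we rewrite everything through the intertwining relation~\eqref{eq:res of M and Delta}. Write $F = \nabla_0 u$ when $F$ is a gradient field, but actually the statement is for general $F \in \L^2(\Omega)^n$, so instead I would proceed directly on the operator level. The plan is to exploit that $1-P_t = 1 - (1+t^2M)^{-1} = t^2 M (1+t^2M)^{-1}$, so that $\Theta_t(1-P_t) = \Theta_t \cdot t^2 M (1+t^2M)^{-1}$. Recalling $Q_t = -t\Div_0(1+t^2M)^{-1}$ and $Q_t^* = tP_t\nabla_0$ (its bounded extension), and $M = -\nabla_0\Div_0$, one gets the factorization $t^2 M (1+t^2M)^{-1} = t^2(-\nabla_0\Div_0)(1+t^2M)^{-1} = \nabla_0 \cdot t\cdot\bigl(-t\Div_0(1+t^2M)^{-1}\bigr) = \nabla_0 \, t Q_t$ — wait, that leaves a stray $\nabla_0$; more carefully, $Q_t^*Q_t = t^2 M(1+t^2M)^{-2}$ from Lemma~\ref{lemma:qe for Qt}, which is one power of $(1+t^2M)^{-1}$ short. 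The cleanest route: write $1-P_t = t^2 M (1+t^2M)^{-1}$ and observe $t^2 M(1+t^2M)^{-1} = (1+t^2M)^{-1} t^2 M$; then since $M = -\nabla_0\Div_0$ on $\ran$, we have $t^2M(1+t^2M)^{-1} = \bigl(t(1+t^2M)^{-1}\nabla_0\bigr)\bigl(-t\Div_0\bigr) = \ldots$ — but $-t\Div_0$ alone is unbounded. The right move is to keep a resolvent on each side: $(1-P_t) = \bigl(tP_t\nabla_0\bigr)\bigl(-t\Div_0 P_t\bigr)$ is wrong by one resolvent power too.

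Let me restart the factorization cleanly. We have $1 - P_t = t^2 M(1+t^2M)^{-1}$. Using $M = -\nabla_0 \Div_0$ and that $(1+t^2M)^{-1}$ commutes with $M$, write
\begin{equation*}
1 - P_t = t^2 M (1+t^2M)^{-1} = -(1+t^2M)^{-1} \, t\nabla_0 \cdot t\Div_0 = Q_t^* \cdot \bigl(t\Div_0(1+t^2M)^{-1}\bigr)\cdot(1+t^2M),
\end{equation*}
which is circular. The genuinely useful identity is simply $1 - P_t = -Q_t^{*}\,t\Div_0$ as operators on $\dom(\Div_0)$?? No. I think the honest approach, and the one the authors intend given the phrase ``becomes easy'', is: use Lemma~\ref{lemma:Theta odes} (uniform boundedness of $\Theta_t$ on $\L^2$) to bound $\norm{\Theta_t(1-P_t)F}_2 \lesssim \norm{(1-P_t)F}_2$, and then invoke the quadratic estimate $\int_0^\infty \norm{(1-P_t)F}_2^2\,\ddt/t = \int_0^\infty \norm{t^2M(1+t^2M)^{-1}F}_2^2\,\ddt/t \lesssim \norm{F}_2^2$, which is a standard quadratic estimate for the sectorial (indeed self-adjoint) operator $M$ with the $\H_0^\infty$-function $g(z) = z/(1+z)$, valid on $\overline{\ran(M)}$, combined with the fact that $1-P_t$ annihilates $\ker(M)$ so only the range part contributes.

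So the steps are: \textbf{(1)} Apply the uniform $\L^2$-bound for $\Theta_t$ from Lemma~\ref{lemma:Theta odes} to reduce to estimating $\int_0^\infty \norm{(1-P_t)F}_2^2\,\ddt/t$. \textbf{(2)} Decompose $F = F_0 + F_1$ with $F_0 \in \ker(M)$ and $F_1 \in \overline{\ran(M)}$ via the self-adjoint (hence orthogonal) decomposition; note $(1-P_t)F_0 = 0$ since $P_t F_0 = F_0$. \textbf{(3)} On $\overline{\ran(M)}$, recognize $(1-P_t)F_1 = g(t^2 M)F_1$ with $g(z) = z/(1+z) \in \H_0^\infty(\Sec_\varphi)$ for any $\varphi \in (0,\pi)$, and apply the quadratic estimates for $M$ guaranteed by Proposition~\ref{prop: M is self-adjoint} to conclude $\int_0^\infty \norm{g(t^2M)F_1}_2^2\,\ddt/t \lesssim \norm{F_1}_2^2 \le \norm{F}_2^2$. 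Chaining (1)–(3) gives the claim.

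The main obstacle — and it is a mild one — is step (3): making sure the quadratic estimate for $M$ is available in the form needed. This is exactly what Proposition~\ref{prop: M is self-adjoint} provides (``satisfies quadratic estimates'' on the range, via the bounded $\H^\infty$-calculus for self-adjoint operators), so no real work is required; one only needs to check that $g(z) = z/(1+z)$ decays appropriately at $0$ and $\infty$ to lie in $\H_0^\infty(\Sec_\varphi)$, which is immediate. An alternative, perhaps even shorter, is to bypass the kernel/range splitting entirely by noting that $(1-P_t) = t^2 M(1+t^2M)^{-1}$ and using $\norm{t^2M(1+t^2M)^{-1}F}_2^2 = \norm{t\sqrt{M}(1+t^2M)^{-1}\cdot t\sqrt{M}F}_2^2$ only formally — so the splitting is the rigorous path. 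Either way, the proposition is genuinely a one-paragraph consequence of the smoothing choice, which is the whole point the authors are advertising.
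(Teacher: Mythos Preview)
Your approach has a genuine gap in step~(3): the function $g(z) = z/(1+z)$ does \emph{not} lie in $\H_0^\infty(\Sec_\varphi)$, because it fails to decay at infinity --- indeed $g(z) \to 1$ as $|z| \to \infty$. Consequently the quadratic estimate $\int_0^\infty \norm{(1-P_t)F}_2^2 \ddt$ is in general \emph{divergent}: for any eigenvector $v$ of $M$ with eigenvalue $\lambda > 0$ one computes $\norm{(1-P_t)v}_2 = \frac{t^2\lambda}{1+t^2\lambda}\norm{v}_2$, and $\int_1^\infty \bigl(\tfrac{t^2\lambda}{1+t^2\lambda}\bigr)^2 \ddt = \infty$. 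So step~(1), bounding $\Theta_t$ uniformly and passing the estimate entirely to $(1-P_t)$, throws away decay that you cannot recover.

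The paper's argument avoids this by an algebraic factorization rather than a crude operator bound. Writing $1-P_t = t^2 M P_t$ and using $\Div A\, M = -\Div A \nabla_0 \Div_0 \subset \mathcal{L}\Div_0$, one obtains
\[
\Theta_t(1-P_t)F = -t(1+t^2\mathcal{L})^{-1}\Div A\, t^2 M P_t F = t^2 L(1+t^2L)^{-1} Q_t F = \bigl(1-(1+t^2L)^{-1}\bigr) Q_t F.
\]
Now the uniformly bounded factor $1-(1+t^2L)^{-1}$ can be discarded, and one is left with the quadratic estimate for $Q_t$ from Lemma~\ref{lemma:qe for Qt}, which corresponds to the function $z \mapsto \sqrt{z}/(1+z)$ --- and \emph{this} one genuinely lies in $\H_0^\infty$. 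Your earlier meandering was in fact circling the right identity (you wrote down $t^2 M(1+t^2M)^{-1}$ and tried to factor through $Q_t$); the missing ingredient was to let the $\nabla_0$ from $M = -\nabla_0\Div_0$ combine with $\Div A$ in $\Theta_t$ to produce $\mathcal{L}$, rather than trying to absorb it back into $M$.
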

    \begin{proof}
        Note that $ 1-\bP_t = t^2 M_\bAr{D} \bP_t $ and $ T^*_\bAr{D}  \bB M_\bAr{D} = T^*_\bAr{D}  \bB T_\bAr{D} T^*_\bAr{D}  \subseteq \bcL T^*_\bAr{D}  $.
        Thus, for $ F \in \L^2 ( \bO )^N $, we can compute
    \begin{align*}
        \bTheta_t (1- \bP_t ) F
        &= t (1+ t^2 \bcL )^{-1}
        T^*_\bAr{D}  \bB t^2 M_\bAr{D} \bP_t F \\
        &= t^2 (1+ t^2 \bcL )^{-1}
        \bcL t T^*_\bAr{D}  \bP_t F \\
        &= t^2 \bL (1+t^2 \bL )^{-1} \bQ_t F
        = ( 1 - (1+t^2 \bL )^{-1} ) \bQ_t F \, .
    \end{align*}
    By $ m $-accretivity, the resolvents $ ( 1+ t^2 \bL )^{-1} $ are uniformly bounded, so we arrive at
    \begin{equation*}
        \int_0^\infty \norm{  \bTheta_t (1- \bP_t ) F}^2_2
        \ddt
        \les \int_0^\infty \norm{ \bQ_t F }^2_2
        \ddt \les \norm{F}^2_2 \, ,
    \end{equation*}
    where the last step follows from the quadratic estimate for $ \bQ_t $ from Lemma~\ref{lemma:qe for Qt}.
    \end{proof}

	\subsection{Reduction to finite time}
	\label{sec:reduction finite time}

    We want to approximate the first term in~\eqref{eq:split quadratic estimates integral} by the principal part. However, since the dyadic structure on $\bO$ and hence the approximation $ \bgamma_t \mathcal{A}_t $ only exist on small scales,
   	the integral for $ t \geq 1 $ has to be treated separately.

    \begin{proposition}
        \label{prop:reduction to finite time}
        It holds
        \begin{equation*}
            \int_1^\infty
            \norm{ \bTheta_t \bP_t T_\bAr{D} u }^2_2
            \ddt \les \norm{ T_\bAr{D} u }^2_2
            \, , \qquad u \in \IW^{1,2}_\bAr{D}( \bO ) \, .
        \end{equation*}
    \end{proposition}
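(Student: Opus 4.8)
The plan is to use the intertwining identity~\eqref{eq:res of M and Delta} to convert the vector-valued quantity $\Theta_t P_t \nabla_0 u$ into something scalar, then exploit Assumption~\ref{ass:poincare} to gain extra smallness on large scales. Concretely, by~\eqref{eq:res of M and Delta} we have $P_t \nabla_0 u = \nabla_0 (1 - t^2\Delta)^{-1} u$, so that $\Theta_t P_t \nabla_0 u = \Theta_t \nabla_0 (1 - t^2\Delta)^{-1} u = t L (1 + t^2 L)^{-1} (1 - t^2 \Delta)^{-1} u$, using that $\Theta_t \nabla_0 = -t(1+t^2\mathcal{L})^{-1}\Div A \nabla_0 = tL(1+t^2L)^{-1}$ on $\H^1_0(\Omega)$. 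Now I would estimate the operator norm of $tL(1+t^2L)^{-1}$ on $\L^2(\Omega)$ — which is $O(1/t)$ by $m$-accretivity of $L$ (writing $tL(1+t^2L)^{-1} = \tfrac1t(1 - (1+t^2L)^{-1})$) — and separately control $\| (1 - t^2\Delta)^{-1} u \|_2$. For the latter, since $-\Delta$ is $m$-accretive we have the crude bound $\|(1-t^2\Delta)^{-1}u\|_2 \le \|u\|_2$, and combined with Assumption~\ref{ass:poincare} in the form $\|u\|_2 \lesssim \|\nabla_0 u\|_2$ this gives $\|\Theta_t P_t \nabla_0 u\|_2 \lesssim t^{-1}\|\nabla_0 u\|_2$.

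With this pointwise-in-$t$ bound in hand, the proposition follows by integrating:
\begin{equation*}
    \int_1^\infty \norm{\Theta_t P_t \nabla_0 u}^2_2 \ddt
    \lesssim \int_1^\infty t^{-2}\, \norm{\nabla_0 u}^2_2 \ddt
    = \norm{\nabla_0 u}^2_2 \int_1^\infty t^{-2}\ddt
    \lesssim \norm{\nabla_0 u}^2_2 \, ,
\end{equation*}
since $\int_1^\infty t^{-3}\,\mathrm{d}t < \infty$ (recall $\ddt = \mathrm{d}t/t$). The decay $t^{-2}$ in the integrand is exactly what makes the tail $t\ge 1$ converge, and it is Assumption~\ref{ass:poincare} that supplies the needed conversion of the $\L^2$-norm of $(1-t^2\Delta)^{-1}u$ into the gradient norm on the right-hand side.

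The one point that needs a little care is the justification that $\Theta_t \nabla_0 u = tL(1+t^2L)^{-1}u$ for $u \in \H^1_0(\Omega)$: this is the identity $-t(1+t^2\mathcal{L})^{-1}\Div A\nabla_0 u = tL(1+t^2L)^{-1}u$, which holds because $\mathcal{L}$ restricted to the relevant domain is $L$ and because $\Div A \nabla_0 u = \mathcal{L}u \in (\H^1_0(\Omega))^*$ pairs correctly with the resolvent; this is implicit in the passage from~\eqref{eq:QE} to~\eqref{eq:Theta_t QE} in Section~\ref{sec:reduction}. Alternatively, one can bypass it entirely by working directly with $\Theta_t$: the family $\{\Theta_t\}$ is uniformly bounded on $\L^2(\Omega)^n$ by Lemma~\ref{lemma:Theta odes}, and one needs instead a $t^{-1}$-decay for $\Theta_t$ acting on gradient fields, which again comes from the resolvent structure and Assumption~\ref{ass:poincare}. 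I do not expect a serious obstacle here; the main content is simply recognizing that the large-scale tail is handled by a soft decay estimate rather than by the dyadic principal-part machinery (which is reserved for $t \le 1$).
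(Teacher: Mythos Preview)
Your argument is correct: the intertwining~\eqref{eq:res of M and Delta} gives $\Theta_t P_t \nabla_0 u = tL(1+t^2L)^{-1}(1-t^2\Delta)^{-1}u$, the factor $tL(1+t^2L)^{-1} = t^{-1}(1-(1+t^2L)^{-1})$ has operator norm $\lesssim t^{-1}$ by $m$-accretivity, and the Poincar\'e inequality from Assumption~\ref{ass:poincare} converts $\|(1-t^2\Delta)^{-1}u\|_2 \le \|u\|_2$ into a gradient bound. The resulting pointwise estimate $\|\Theta_t P_t\nabla_0 u\|_2 \lesssim t^{-1}\|\nabla_0 u\|_2$ integrates against $\mathrm{d}t/t$ over $[1,\infty)$.

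This is, however, a genuinely different route from the paper's. The paper does \emph{not} use Assumption~\ref{ass:poincare} here; instead it invokes Assumption~\ref{ass:laplace} and the intertwining property (Proposition~\ref{prop:M and Delta}) to bound $\|P_t\nabla_0 u\|_{\H_\alpha} \lesssim \|M^{\alpha/2}P_t\nabla_0 u\|_2$, then uses $1 \le t^{2\alpha}$ for $t\ge 1$ to reduce to a square-function estimate for $M$ with the $\H^\infty_0$-function $f(z)=z^{\alpha/2}(1+z)^{-1}$. Your approach is more elementary and avoids Assumption~\ref{ass:laplace} entirely for this step, at the cost of leaning on Poincar\'e; the paper's approach is set up in parallel with the proof of Proposition~\ref{prop:smooth principal part} (same $\H_\alpha$-machinery) and is robust to the generalization outlined in Section~\ref{sec:generalization}, where Assumption~\ref{ass:poincare} is dropped and $M$ is replaced by $M+1$. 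In the present setting both are valid, and yours is the shorter argument.
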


    \begin{proof}
        Denote by $ H_\alpha $ the complex interpolation space $ [ \L^2 ( \bO )^m , \IW^{1,2}_\bAr{D} ( \bO ) ]_\alpha $.
        Since $ \bTheta_t $ is uniformly bounded in $ \mathcal{L} ( \L^2 ( \bO )^N , \L^2 ( \bO )^m ) $ and $ \H_\alpha \embeds \L^2 ( \bO )^m $, we have
        \begin{equation*}
            \norm{ \bTheta_t \bP_t T_\bAr{D} u }_2
            \les \norm{ \bP_t T_\bAr{D} u }_2
            \les \norm{ \bP_t T_\bAr{D} u }
            _{ H_\alpha }.
        \end{equation*}
        Recall from~\eqref{eq:intertwining of res} the commuting relation $ \bP_t T_\bAr{D} u = T_\bAr{D} (1 + t^2 (-\Delta_\bAr{D} + 1) )^{-1} u $.
        Since the resolvent $ (1 + t^2 (-\Delta_\bAr{D} + 1 ) )^{-1} $ maps into $ \dom (-\Delta_\bAr{D}+1) $ we can use Proposition~\ref{ass:laplace} in conjunction with the square root property of $ -\Delta_\bAr{D} + 1 $ and the intertwining property from Proposition~\ref{prop:M and Delta} with the $ \H^\infty $-function $ f(z) = \frac{ z^\alpha }{ 1+t^2 z } $ to obtain
        \begin{align*}
            \norm{ \bP_t T_\bAr{D} u }_{ H_\alpha }
            &={} \normm{ T_\bAr{D} (1 + t^2 (-\Delta_\bAr{D} + 1 ) )^{-1} u }_{ H_\alpha } \\
            &\les{} \normm{ ( -\Delta_\bAr{D} + 1 )^{ \frac{ 1+ \alpha }{2} } ( 1 + t^2 (-\Delta_\bAr{D} + 1 ) )^{-1} u }_2 \\
            &={} \normm{ T_\bAr{D} ( -\Delta_\bAr{D} + 1 )^\frac{\alpha}{2}
            (1 + t^2 (-\Delta_\bAr{D} + 1 ) )^{-1} u }_2 \\
            &={}     \normm{ (M_\bAr{D})^\frac{\alpha}{2} \bP_t T_\bAr{D} u }_2.
        \end{align*}
        We can then estimate $ 1 \leq t^{ 2 \alpha } $ to obtain a square function estimate for $ M_\bAr{D} $ with $ f(z) = z^{ \alpha /2 } ( 1+ z)^{-1} $, which reads
        \begin{align*}
            \int_1^\infty
            \normm{ \bTheta_t \bP_t T_\bAr{D} u }^2_2 \ddt
            &\les \int_1^\infty t^{2 \alpha }
            \normm{ (M_\bAr{D})^\frac{\alpha}{2} ( 1+t^2 M_\bAr{D} )^{-1} T_\bAr{D} u }^2_2 \ddt \\
            &= \int_1^\infty \normm{ f( t^2 M_\bAr{D} )
            T_\bAr{D} u }^2_2 \ddt
            \les \normm{ T_\bAr{D} u }^2_2 \, ,
        \end{align*}
    	where we used Proposition~\ref{prop: M is self-adjoint} in the final step.
    \end{proof}

	\subsection{Smoothed principal part approximation}
	\label{sec:smoothed}

    As the next step, we want to approximate $\bTheta_t \bP_t T_\bAr{D}$ by the smoothed principle part $\bgamma_t \mathcal{A}_t \bP_t T_\bAr{D}$. This will be the content of Proposition~\ref{prop:smooth principal part}. As a preparation, we establish the following lemma first.
	To this end, recall from Remark~\ref{rem:def of gamma} the (pointwise) matrix action of $\bgamma_t$.

    \begin{lemma}
        \label{lemma:13.6 in lectures}
        The family $\bgamma_t \mathcal{A}_t$ is uniformly bounded in $\mathcal{L}(\L^2(\bO)^N)$ for $t\in (0,1]$. Moreover, for $ F = (F_0 , \ldots , F_n ) \in \IW^{1,2}_\bAr{D} ( \bO )^{n+1} $, there holds
        \begin{equation*}
         \norm{ ( \bTheta_t - \bgamma_t \mathcal{A}_t ) F }_2
         \les t \norm{T_\bAr{D} F}_2
         , \quad t \in (0,1] .
        \end{equation*}
    \end{lemma}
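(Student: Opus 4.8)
The plan is to establish the two assertions separately, first the uniform boundedness of $\gamma_t \mathcal{A}_t$ and then the quantitative approximation bound, which is the heart of the statement and the traditional "principal part approximation" lemma (compare~\cite[Lem.~13.6]{ISEM}).

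For the uniform boundedness, first I would record that, by the off-diagonal estimates for $\Theta_t$ from Lemma~\ref{lemma:Theta odes}, the local $\L^2$-averages of $\gamma_t$ are controlled: for every dyadic cube $Q \in \Box_t$ one has $\fint_Q |\gamma_t|^2 \les 1$ with an implicit constant depending only on the off-diagonal constants and the doubling constant. This is the standard argument---apply $\Theta_t$ to the constant function $e_i$ localised to a fixed dilate of $Q$, use that $\Theta_t$ annihilates nothing but that the tail pieces decay exponentially in $\dist/t$, and sum the resulting geometric series over the annuli $2^{j}Q$, using Proposition~\ref{prop:dyadic structure}~(4) to compare $Q$ with genuine Euclidean cubes and the doubling property~\eqref{eq:doubling}. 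Once $\|\gamma_t\|_{\L^2(Q)}^2 \les |Q|$ uniformly in $Q \in \Box_t$ and $t \in (0,1]$, boundedness of $\gamma_t \mathcal{A}_t$ on $\L^2(\Omega)^n$ follows by decomposing $\Omega$ into the cubes of $\Box_t$: on each $Q$ the function $\mathcal{A}_t F$ is the constant $(F)_Q$, so $\int_Q |\gamma_t \mathcal{A}_t F|^2 = |(F)_Q|^2 \|\gamma_t\|_{\L^2(Q)}^2 \les |Q| |(F)_Q|^2 \le \int_Q |F|^2$ by Jensen, and summing over $Q \in \Box_t$ gives $\|\gamma_t \mathcal{A}_t F\|_2 \les \|F\|_2$.

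For the approximation estimate, the key is the cancellation $\Theta_t \mathbf{1} = \gamma_t$ built into the definition of $\gamma_t$ together with the locality of $\mathcal{A}_t$. Fix $t \in (0,1]$ and decompose $\Omega$ (up to the null set $\mathfrak{N}$) into the cubes $Q \in \Box_t$. On each such $Q$ write $(\Theta_t - \gamma_t \mathcal{A}_t) F = \Theta_t\big(F - (F)_Q\big) + \big(\Theta_t \mathbf{1} - \gamma_t\big)(F)_Q = \Theta_t(F - (F)_Q)$ locally, since the second term vanishes; more precisely one splits $F - (F)_Q = \mathbf{1}_{3Q}(F-(F)_Q) + \mathbf{1}_{\Omega\setminus 3Q}(F-(F)_Q)$ and estimates the near part by $\L^2$-boundedness of $\Theta_t$ together with a Poincaré inequality on $3Q$---this is where $F \in \H^1_0(\Omega)^n$ enters and produces the gain $t\|\nabla F\|_{\L^2(3Q)}$---while the far part is handled by the off-diagonal decay of $\Theta_t$, summing the exponentially small contributions from the shells around $Q$ and again invoking a Poincaré-type control of $\|F-(F)_Q\|_{\L^2(2^jQ)}$ against $2^j t \|\nabla F\|_{\L^2(2^jQ)}$ (the linear growth in $2^j$ being absorbed by $\e^{-c 2^j}$). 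Squaring, summing over $Q \in \Box_t$, and using bounded overlap of the dilates $\{3Q : Q \in \Box_t\}$ (a consequence of Proposition~\ref{prop:dyadic structure}~(4)) yields $\|(\Theta_t - \gamma_t \mathcal{A}_t) F\|_2^2 \les t^2 \|\nabla F\|_2^2 \le t^2 \|F\|_{\H^1_0(\Omega)^n}^2$, as claimed.

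The main obstacle I expect is geometric rather than harmonic-analytic: the dyadic "cubes" of Proposition~\ref{prop:dyadic structure} are only comparable to Euclidean cubes via item~(4), are merely open sets of controlled diameter and inradius, and a Poincaré inequality on a fixed dilate $3Q \cap \Omega$ of such a cube is not automatic. One must either invoke that $\Omega$ satisfies a local $(1,2)$-Poincaré inequality (which does follow from Assumption~\ref{ass:Omega} via the $n$-regularity and standard chaining arguments, but needs to be cited or proved) or replace the genuine average $(F)_Q$ by the dyadic average $\mathcal{A}_t F$ on $Q$ and argue directly. Controlling the interplay between the enlarged sets $2^jQ$ and $\Omega$, and ensuring the Poincaré constants on these sets are uniform in $Q$ and $t$, is the technically delicate point; everything else is the routine off-diagonal-plus-Poincaré template from~\cite{ISEM}.
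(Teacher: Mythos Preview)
Your overall template (decompose into dyadic cubes, replace $F$ by $F-(F)_Q$, split into annuli, apply off-diagonal estimates plus a Poincar\'e-type bound) is exactly the one the paper uses, and your treatment of the uniform boundedness of $\gamma_t\cdot\mathcal A_t$ is equivalent to theirs. The gap is precisely where you locate it: the Poincar\'e step. Your suggested fix---that a local $(1,2)$-Poincar\'e on sets of the form $2^jQ\cap\Omega$ \enquote{follows from Assumption~\ref{ass:Omega} via $n$-regularity and standard chaining}---is not correct. Interior $n$-regularity together with $(n-1)$-regularity of $\partial\Omega$ does not by itself give a uniform Poincar\'e inequality on such intersections; chaining arguments need a John-type connectivity hypothesis that is not part of Assumption~\ref{ass:Omega}. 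So as written, your argument does not close.

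The paper sidesteps the issue entirely, and this is the missing idea: since $F\in\H^1_0(\Omega)^n$, its zero extension $E_0F$ lies in $\H^1(\R^n)^n$ with $\nabla(E_0F)=E_0(\nabla F)$, and $(F)_Q=(E_0F)_Q$. One then applies the local Sobolev--Poincar\'e inequality \cite[Prop.~10.17]{ISEM} on the full Euclidean cube $2^{\ell+1}\Q^*\supseteq Q$, which is convex and has explicit constants, obtaining
\[
\normm{\one_{2^{\ell+1}\Q^*\cap\Omega}(F-(F)_Q)}_2
=\normm{E_0F-(E_0F)_Q}_{\L^2(2^{\ell+1}\Q^*)}
\les 2^{\ell(n+1)}\,t\,\normm{\nabla F}_{\L^2(2^{\ell+1}\Q^*\cap\Omega)}.
\]
No Poincar\'e inequality on $\Omega$ or on irregular intersections is ever needed; the only geometric input is $|Q|\gtrsim|\Q_*|$ from $n$-regularity, used to control the ratio $\diam(2^{\ell+1}\Q^*)^n/|Q|$. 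The polynomial loss $2^{\ell(n+1)}$ (not the linear $2^\ell$ you wrote) is harmless against the exponential off-diagonal decay, and the rest of the argument proceeds as in \cite[Lem.~13.6]{ISEM}.
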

    \begin{proof}
    	Fix $t\in (0,1]$ and the natural number $k$ for which $t \in (2^{-(k+1)}, 2^{-k}]$. First, we work on a fixed $Q \in \Box_t$. By utilizing Proposition~\ref{prop:dyadic structure} $(4)$, there exist constants $a_0, a_1 > 0$ and $z \in \bO$ such that
        \begin{align}
        \label{eq:nesting_dyadic_cube}
            \Q(z, a_0 2^{-k}) \cap \bO \subseteq Q \subseteq \Q(z, a_1 2^{-k}) \cap \bO.
        \end{align}
        Let $Q_* \coloneqq \Q(z, a_0 2^{-k})$ and $Q^* \coloneqq \Q(z, a_1 2^{-k})$. Then, the left-hand side of~\eqref{eq:nesting_dyadic_cube} becomes $Q_* \cap \bO$ and the right-hand side $Q^* \cap \bO$.
        Owing to interior thickness of $\bO$, there holds
        \begin{align}
            \label{eq:Q_interior_thickness}
            \abs{Q} \gtrsim \abs{Q_*}.
        \end{align}
        For a bounded set $K \subseteq \bO$ and $b \in\L^\infty(\bO)^N$, the limit $\bTheta_t b := \lim_{k \to \infty} \bTheta_t(\mathbf{1}_{2^k Q^* \cap \bO}b)$ exists in $\L^2(K)^N$ and is independent of $Q^*$. Indeed, this is a consequence of the off-diagonal estimates of $\bTheta_t$, see for instance~\cite[Prop.~11.12]{ISEM}.
        We have already employed the same construction when defining the principal part in Definition~\ref{def:principal part}.
        Now, we define \emph{annuli} of a \enquote{cube} $Q$ by
    \begin{equation*}
        C_1(Q) \coloneqq 4Q^* \cap \bO, \quad C_{\ell}(Q) \coloneqq \Bigl(2^{\ell + 1}Q^* \setminus 2^{\ell}Q^* \Bigr) \cap \bO,
    \end{equation*}
    where $\ell \geq 2$.
    Using this notation in the definition of $\bTheta_t b$ yields
    \begin{align*}
        \bgamma_t \mathcal{A}_t F &= \underset{k \to \infty}{\lim} \bTheta_t(\mathbf{1}_{2^kQ^*\cap \bO}(F)_Q)
        = \underset{k \to \infty}{\lim} \sum_{\ell = 1}^{k-1} \bTheta_t(\mathbf{1}_{C_{\ell}(Q)}(F)_Q)
        = \sum_{\ell =1}^{\infty} \bTheta_t(\mathbf{1}_{C_{\ell}(Q)}(F)_Q).
    \end{align*}
    Now, note for $\ell \geq 2$ that
    \begin{equation*}
        \dfrac{\mathrm{d}(C_\ell(Q),Q)}{t} \geq \dfrac{\mathrm{d}(C_\ell(Q),Q^*)}{t} \gtrsim 2^\ell. %
    \end{equation*}
    Both of the above, in conjunction with Lemma~\ref{lemma:Theta odes}, allow us to see that
    \begin{align*}
        \norm{\mathbf{1}_Q (\bgamma_t \mathcal{A}_t F)}_2 &\leq \sum_{\ell = 1}^\infty\norm{\mathbf{1}_Q\bTheta_t(\mathbf{1}_{C_{\ell}(Q)}(F)_Q)}_2 \\
        &\les \sum_{\ell = 1}^\infty e^{-c 2^\ell}\norm{\mathbf{1}_{C_{\ell}(Q)}(F)_Q}_2.
    \end{align*}
    By H\"older's inequality and~\eqref{eq:Q_interior_thickness}, we have
    \begin{align}
        \norm{\mathbf{1}_{C_\ell(Q)}(F)_Q}_2 &= \dfrac{\abs{C_\ell(Q)}^\frac{1}{2}}{\abs{Q}}\abs{\int_Q F(y)\d y }
        \leq \left( \dfrac{\abs{C_\ell(Q)}}{\abs{Q}}\right) ^\frac{1}{2}\norm{\mathbf{1}_Q F}_2
        \lesssim 2^\frac{\ell n}{2} \norm{\mathbf{1}_Q F}_2,
    \end{align}
    which gives us
    \begin{equation}
        \norm{\mathbf{1}_Q (\bgamma_t \mathcal{A}_t F)}_2 \les \left(\sum_{\ell = 1}^\infty 2^{\frac{\ell n}{2}} e^{-c 2^\ell} \right)\norm{\mathbf{1}_Q F}_2 \lesssim \norm{\mathbf{1}_Q F}_2.
    \end{equation}
    Since the implicit constant is independent of $Q$ and $t$, squaring the last bound and summing in $Q$ yields the first assertion. Now, we can proceed with the second assertion. Since $\bTheta_t$ is bounded on $\L^2(\bO)^N$, we have
    \begin{equation*}
        \bTheta_tF = \sum_{\ell =1}^{\infty}\bTheta_t(\mathbf{1}_{C_{\ell}(Q)}F).
    \end{equation*}
    Taking the difference with $\gamma_t \mathcal{A}_t F$ and again applying off-diagonal estimates, %
    we have
    \begin{align}
        \label{eq:sum_for_local_poincare}
        \norm{\mathbf{1}_{Q}(\bTheta_t - \bgamma_t \mathcal{A}_t)F}_2 &\leq \sum_{\ell =1}^{\infty}\norm{\mathbf{1}_{Q}\bTheta_t(\mathbf{1}_{C_\ell(Q)}(F-(F)_Q))}_2 \\
        &\les \sum_{\ell =1}^{\infty}e^{-c 2^\ell}\norm{\mathbf{1}_{2^{\ell+1}Q^*\cap\bO}(F-(F)_Q)}_2.
    \end{align}
    Now, we distinguish whether $2^\ell t$ is small or large. More precisely, there is a natural number $\ell_0$ such that the local Poincaré inequality from Remark~\ref{rem:local_poincare} is applicable on $2^\ell Q^* \cap \bO$ if $\ell \leq \ell_0$ (note that we tacitly use comparability between balls and cubes here). To the contrary, if $\ell > \ell_0$, we deduce that $2^\ell t \geq \kappa$, where $\kappa$ is a constant depending on the geometry.
    In the first case, fix $\ell \leq \ell_0$ and deduce by virtue of the local Poincaré inequality (Remark~\ref{rem:local_poincare}) the estimate
    \begin{align}
        \norm{\mathbf{1}_{2^{\ell+1}Q^*\cap\bO}(F-(F)_Q)}_2 \lesssim \ell 2^{\nicefrac{n \ell}{2}} 2^\ell t \| \nabla F \|_{\L^2(c 2^{\ell+1}Q^*\cap\bO)} \leq \ell 2^{\nicefrac{n \ell}{2}} 2^\ell t \| T_\bAr{D} F \|_{\L^2(c 2^{\ell+1}Q^*\cap\bO)}.
    \end{align}
    In the second case, for fixed $\ell > \ell_0$, estimate using Jensen's inequality
    \begin{align}
        \norm{\mathbf{1}_{2^{\ell+1}Q^*\cap\bO}(F-(F)_Q)}_2 \lesssim 2^{\nicefrac{n\ell}{2}} \| F \|_{\L^2(2^{\ell+1}Q^*\cap\bO)} \lesssim 2^{\nicefrac{n\ell}{2}} 2^\ell t \| T_\bAr{D} F \|_{\L^2(2^{\ell+1}Q^*\cap\bO)}.
    \end{align}
    Plugging the last two bounds back into~\eqref{eq:sum_for_local_poincare} and using the double exponential decay, we obtain
    \begin{align}
        \norm{\mathbf{1}_{Q}(\bTheta_t - \bgamma_t \mathcal{A}_t)F}_2 \lesssim t \sum_{\ell=1}^\infty e^{-c 2^\ell} \| T_\bAr{D} F \|_{\L^2(c 2^{\ell+1}Q^*\cap\bO)}.
    \end{align}
    To conclude the proof, we have to sum the last bound over all dyadic \enquote{cubes} on scale $t$. Due to the rapid decay in $\ell$, this follows by standard arguments, see for instance~\cite[Lem.~13.6]{ISEM} for details.
    \end{proof}

    \begin{remark}
        Instead of using a local Poincaré inequality on small scales, there is another approach to obtain the aforementioned result in the literature based on a weighted (global) Poincaré inequality, see~\cite[Prop.~8.2]{Laplace-Extrapolation}. For that approach, the existence of an (inhomogeneous) Sobolev extension operator is necessary. Even though our local Poincaré inequality was deduced by using a local and homogeneous Sobolev extension operator, there might be other settings in which local Poincaré inequalities are easier to obtain than a Sobolev extension operator.
    \end{remark}

    \begin{proposition}
    \label{prop:smooth principal part}
        There holds the quadratic estimate
        \begin{equation*}
            \int_0^1 \norm{ ( \bTheta_t - \bgamma_t \mathcal{A}_t )
            \bP_t T_\bAr{D} u }_2^2 \ddt
            \les \norm{ T_\bAr{D} u }_2^2 \, ,
            \qquad u \in \IW^{1,2}_\bAr{D} ( \bO ) \, .
        \end{equation*}
    \end{proposition}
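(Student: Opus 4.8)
The plan is to exploit the by-now standard device for smoothed principal part approximation: split the integrand using the reproducing formula for $Q_t$, combine Lemma~\ref{lemma:13.6 in lectures} with the quadratic estimate for $Q_t$ on small scales, and then transfer the $\H^1_0$-norm appearing in Lemma~\ref{lemma:13.6 in lectures} back to $\|\nabla_0 u\|_2$ via the intertwining relation~\eqref{eq:res of M and Delta} together with Assumption~\ref{ass:laplace}. Concretely, write $R_t \coloneqq \Theta_t - \gamma_t \mathcal{A}_t$ and recall from Lemma~\ref{lemma:13.6 in lectures} that $\|R_t G\|_2 \les t\|G\|_{\H^1_0(\Omega)^n}$ for $G \in \H^1_0(\Omega)^n$, while $\{R_t\}$ is uniformly bounded on $\L^2(\Omega)^n$ (being the difference of two uniformly bounded families). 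The argument $P_t\nabla_0 u$ is not in $\H^1_0$ in general, so we cannot apply the $\H^1_0$-bound directly to it; the point is that $P_t\nabla_0 u = \nabla_0(1-t^2\Delta)^{-1}u$ by~\eqref{eq:res of M and Delta}, and $(1-t^2\Delta)^{-1}u \in \dom(-\Delta)$, which is exactly the class of functions for which Assumption~\ref{ass:laplace} applies.

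First I would estimate, for fixed $t \in (0,1]$, using the $\H^1_0$-bound of Lemma~\ref{lemma:13.6 in lectures} applied to $G = P_t\nabla_0 u = \nabla_0(1-t^2\Delta)^{-1}u$:
\begin{align*}
  \| R_t P_t \nabla_0 u \|_2
  \les t \, \| \nabla_0 (1-t^2\Delta)^{-1} u \|_{\H^1_0(\Omega)^n}.
\end{align*}
Now I would interpolate: since $\nabla_0(1-t^2\Delta)^{-1}u$ lies in $\H^1_0(\Omega)^n$ and we need to feed eventually a square function for $M$, I would instead bound the weaker $\H_\alpha \coloneqq [\L^2,\H^1_0]_\alpha$ norm by $\L^2$-to-$\H^1_0$ interpolation, writing
\begin{align*}
  \| R_t P_t \nabla_0 u \|_2
  \les t^{\alpha} \, \| \nabla_0 (1-t^2\Delta)^{-1} u \|_{\H_\alpha}
\end{align*}
by combining the $\L^2$-bound and the $\H^1_0$-bound of Lemma~\ref{lemma:13.6 in lectures} with the interpolation property (exactly as done in the proof of Proposition~\ref{prop:reduction to finite time}, but now with the small power $t^\alpha$ since $t \le 1$). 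Then, following verbatim the computation in the proof of Proposition~\ref{prop:reduction to finite time}, Assumption~\ref{ass:laplace} applied to $w = (1-t^2\Delta)^{-1}u \in \dom(-\Delta)$, the Kato square root property of $-\Delta$, and the intertwining Proposition~\ref{prop:M and Delta} with $f(z) = z^{\alpha/2}(1+t^2 z)^{-1}$ give
\begin{align*}
  \| \nabla_0 (1-t^2\Delta)^{-1} u \|_{\H_\alpha}
  \les \| (-\Delta)^{\frac{1+\alpha}{2}}(1-t^2\Delta)^{-1} u \|_2
  = \| M^{\alpha/2} P_t \nabla_0 u \|_2.
\end{align*}
Putting these together and writing $g(z) = z^{\alpha/2}(1+z)^{-1} \in \H^\infty_0(\Sec_\varphi)$, we get $\| R_t P_t \nabla_0 u \|_2 \les \| g(t^2 M) \nabla_0 u \|_2$, and integrating $\int_0^1 \cdots \ddt \le \int_0^\infty \| g(t^2 M)\nabla_0 u\|_2^2 \ddt \les \|\nabla_0 u\|_2^2$ by the square function estimate for $M$ from Proposition~\ref{prop: M is self-adjoint}. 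This completes the proof.

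The main thing to be careful about is that the $t^\alpha$ gained from Lemma~\ref{lemma:13.6 in lectures}-plus-interpolation is precisely what is needed to make $g(z) = z^{\alpha/2}(1+z)^{-1}$ land in $\H^\infty_0$ (decay $z^{\alpha/2}$ at $0$, decay $z^{\alpha/2 - 1}$ at $\infty$), so that the Calderón/McIntosh square function estimate for the self-adjoint operator $M$ applies on the range; since $\nabla_0 u \in \overline{\ran(M)}$ (shown in the proof of Proposition~\ref{prop:M and Delta}), there is no issue with the null space of $M$. I expect no genuine obstacle here — the work was already done in Lemma~\ref{lemma:13.6 in lectures} and in the intertwining machinery; this proposition is essentially an assembly of those ingredients in the spirit of~\cite[Prop.~13.x]{ISEM}.
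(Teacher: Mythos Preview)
Your proposal is correct and follows essentially the same route as the paper: interpolate the $\L^2$- and $\H^1_0$-bounds from Lemma~\ref{lemma:13.6 in lectures} to get $\|(\Theta_t-\gamma_t\mathcal{A}_t)P_t\nabla_0 u\|_2 \les t^\alpha \|P_t\nabla_0 u\|_{\H_\alpha}$, then invoke Assumption~\ref{ass:laplace} and the intertwining relation to rewrite this as $\|f(t^2M)\nabla_0 u\|_2$ with $f(z)=z^{\alpha/2}(1+z)^{-1}$, and conclude by the square function estimate for $M$. Two minor remarks: the reproducing formula for $Q_t$ mentioned in your opening plan is never actually used (and is not needed), and your back-and-forth about whether $P_t\nabla_0 u$ lies in $\H^1_0(\Omega)^n$ is a red herring---what matters, and what Assumption~\ref{ass:laplace} guarantees, is that it lies in $\H_\alpha^n$, which is all the interpolated bound requires.
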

    \begin{proof}
        On the one hand, $\bTheta_t - \bgamma_t \A_t$ is uniformly bounded in $\mathcal{L}(\L^2(\bO)^N, \L^2(\bO)^m )$, see Lemma~\ref{lemma:Theta odes} and Lemma~\ref{lemma:13.6 in lectures}. On the other hand, the $\mathcal{L}( \IW^{1,2}_\bAr{D} ( \bO )^{n+1}, \L^2 ( \bO )^m ) $-norm of $\bTheta_t - \bgamma_t \A_t$ is controlled by $t$ by virtue of Lemma~\ref{lemma:13.6 in lectures}. By interpolation, we obtain boundedness $$\bTheta_t - \bgamma_t \A_t : (H_\alpha)^{n+1} \to \L^2(\bO)^m , \qquad \| \bTheta_t - \bgamma_t \A_t \|_{ H_\alpha \to \L^2(\bO) } \les t^\alpha \, , $$
        where $ H_\alpha $ denotes again the interpolation space $ [ \L^2 ( \bO )^m , \IW^{1,2}_\bAr{D} ( \bO ) ]_\alpha $.
        Therefore, in conjunction with the intertwining property and Proposition~\ref{ass:laplace} (compare with the calculation in the proof of Proposition~\ref{prop:reduction to finite time}), we obtain that
        \begin{equation*}
            \norm{ ( \bTheta_t - \bgamma_t \mathcal{A}_t )
            \bP_t T_\bAr{D} u }_2
            \les t^\alpha \norm{ \bP_t T_\bAr{D} u }_{ H^\alpha }
            \les t^\alpha
            \normm{ (M_\bAr{D})^{ \frac{\alpha}{2} } \bP_t T_\bAr{D} u }_2 .
        \end{equation*}
        With the $ \H^\infty $-function $ f : z \mapsto z^{ \alpha /2 } (1+z)^{-1} $, quadratic estimates for $ M_\bAr{D} $ give
        \begin{equation*}
            \int_0^1 \normm{ ( \bTheta_t - \bgamma_t \mathcal{A}_t )
            \bP_t T_\bAr{D} u }_2^2 \ddt
            \les
            \int_0^1 \normm{ f( t^2 M_\bAr{D} ) T_\bAr{D} u }_2^2 \ddt
            \les \norm{ T_\bAr{D} u }_2^2 \, .
            \qedhere
        \end{equation*}
    \end{proof}

	\subsection{Reduction to a Carleson measure estimate}
	\label{sec:remove smooth}

    So far, we have reduced~\eqref{eq:QE} to the quadratic estimate
    \begin{equation*}
        \int_0^1 \norm{ \bgamma_t \mathcal{A}_t
        \bP_t T_\bAr{D} u }^2_2 \ddt
        \les \norm{ T_\bAr{D} u }^2_2 \, ,
        \qquad u \in \IW^{1,2}_\bAr{D} ( \bO ) \, .
    \end{equation*}
    To eliminate the smoothing via the resolvents $ \bP_t $ and to reduce this to a Carleson measure estimate, we split again
    \begin{equation}
        \label{eq:remove smoothing}
        \int_0^1 \norm{ \bgamma_t \mathcal{A}_t
        \bP_t T_\bAr{D} u }^2_2 \ddt \les
        \int_0^1 \norm{ \bgamma_t \mathcal{A}_t
        ( \bP_t -1 ) T_\bAr{D} u }^2_2 \ddt +
        \int_0^1 \norm{ \bgamma_t \mathcal{A}_t
        T_\bAr{D} u }^2_2 \ddt .
    \end{equation}
    The second term can be dominated by
    \begin{equation*}
        \int_0^1 \int_{ \R^d } \abs{ \bgamma_t (x) (\mathcal{A}_t
        T_\bAr{D} u) (x) }^2 \, \frac{ \d x \d t }{t}
        \leq \int_0^1 \int_{ \R^d } \abs{ ( \mathcal{A}_t T_\bAr{D} u ) (x) }^2
        \, \norm{ \bgamma_t (x) }^2 \frac{ \d x \d t }{t} \, ,
    \end{equation*}
    where $ \norm{ \bgamma_t (x) } $ denotes the operator norm of $ \bgamma_t (x) \in \mathcal{L} ( \C^N , \C^m ) $.
    By Proposition~\ref{prop:dyadic Carleson}, this integral is controlled by $ \normm{ T_\bAr{D} u }^2_2 $, provided $ \d \sigma = \norm{ \bgamma_t }^2 \tfrac{ \d x \d t }{t} $ is a Carleson measure, which we are going to prove in Section~\ref{sec:carleson}.
    To treat the first term on the right-hand side of~\eqref{eq:remove smoothing}, we need the following interpolation inequality, which will be useful again in Section~\ref{sec:carleson}.

    Recall the notation $Tu = (u, \nabla u)^t$ for $u \in \IW^{1,2}(\bO)$. Later, we apply the subsequent results with $u \in \IW^{1,2}_\bAr{D}(\bO)$, so that $Tu = T_\bAr{D} u$ as usual.

    \begin{lemma}[Interpolation inequality]
    	\label{lem:interp ineq}
    	Let $\eta$ be as in Proposition~\ref{prop:dyadic structure}. We have the bound
    	\begin{align*}
    		\abs{\fint_Q \nabla u}^2 \les \dfrac{1}{t^\eta} \left(\fint_Q\abs{u}^2\right)^{\frac{\eta}{2}}\left(\fint_Q\abs{\nabla u}^2\right)^{1-\frac{\eta}{2}}%
    	\end{align*}
    	for all $t \in (0,1]$, $Q \in \Box_t$ and $u \in \IW^{1,2}(\bO)$.
    \end{lemma}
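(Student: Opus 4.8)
The plan is to exploit the Poincaré-type inequality for $\H^1_0(\Omega)$ restricted to a dyadic cube $Q$, interpolated against the trivial bound $\abs{\fint_Q \nabla_0 u}^2 \leq \fint_Q \abs{\nabla_0 u}^2$. The exponent $\eta$ appearing in the statement is precisely the one from the \enquote{thin boundary} estimate Proposition~\ref{prop:dyadic structure}~(5), which is what we need in order to compensate for the fact that a dyadic cube $Q$ is not itself a nice Lipschitz domain — it is only $\Q^* \cap \Omega$ for a Euclidean cube $\Q^*$, and its boundary can be rough. So the heart of the matter is a Poincaré inequality on $Q$ with a power of $t$ that matches $\eta$.

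\textbf{Step 1.} Reduce to a Poincaré-type estimate. By the Cauchy--Schwarz inequality and the fact that $Q \subseteq \Q^* \cap \Omega$ with $\abs{Q} \approx \abs{\Q^*} \approx t^n$ (using Assumption~\ref{ass:Omega} and Proposition~\ref{prop:dyadic structure}~(4) exactly as in the proof of Lemma~\ref{lemma:13.6 in lectures}), it suffices to show
\begin{align*}
    \abs{\fint_Q \nabla_0 u}^2 \les \frac{1}{t^\eta}\left(\fint_Q \abs{u}^2\right)^{\eta/2}\left(\fint_Q \abs{\nabla_0 u}^2\right)^{1-\eta/2}.
\end{align*}
Write $c \coloneqq \fint_Q \nabla_0 u \in \C^n$ and observe $\abs{c}^2 = \fint_Q c \cdot \overline{\nabla_0 u}$. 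The idea is to integrate by parts: with $E_0 u$ the extension of $u$ by zero to $\R^n$ (which lies in $\H^1(\R^n)$ since $u \in \H^1_0(\Omega)$), one has $c \cdot \overline{\nabla_0 u} = c \cdot \overline{\nabla (E_0 u)}$ on $Q$, and $\int_Q c \cdot \overline{\nabla(E_0 u)}$ can be rewritten via the divergence theorem on the Euclidean cube $\Q^*$ as a boundary term plus a term supported where $E_0 u$ differs from what a genuine integration by parts on $\Q^*$ would see — namely on $\Q^* \setminus \Omega$, which meets $Q$ only near $\R^n \setminus Q$.

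\textbf{Step 2.} The thin-boundary bound. The point of the previous manipulation is that $\abs{c}^2 \abs{Q} \les \abs{c} \norm{\nabla_0 u}_{\L^2(Q)} \abs{Q}^{1/2}$ always holds trivially, giving $\abs{c}^2 \leq \fint_Q \abs{\nabla_0 u}^2$; but we can do better by localizing to the layer $S_t \coloneqq \{x \in Q : \dist(x, \R^n \setminus Q) \leq \tau t\}$ for a parameter $\tau \in (0,1]$ to be optimized. On the complement $Q \setminus S_t$ one is genuinely in the interior, so a cutoff $\chi$ that is $1$ off $S_t$, $0$ near $\partial Q$, with $\abs{\nabla \chi} \les (\tau t)^{-1}$, lets one write $\int_Q c \cdot \overline{\nabla_0 u} = \int_Q c \cdot \overline{\nabla(\chi u)} + \int_Q c\cdot\overline{\nabla((1-\chi)u)}$; the first integral vanishes because $\chi u \in \H^1_0$ of the open cube and $c$ is constant (here one uses that $\nabla(\chi u)$ integrates to zero against a constant, i.e. $\int \nabla(\chi u) = 0$), and the second is supported in $S_t$, so it is bounded by $\abs{c}\,\big(\norm{\nabla_0 u}_{\L^2(S_t)} + (\tau t)^{-1}\norm{u}_{\L^2(S_t)}\big)\abs{S_t}^{1/2}$ — wait, more carefully $\int_Q c \cdot \overline{\nabla((1-\chi)u)} = \int_{S_t} c\cdot\overline{((1-\chi)\nabla_0 u - u\nabla\chi)}$, bounded by $\abs{c}(\norm{\nabla_0 u}_{\L^2(S_t)}\abs{S_t}^{1/2} + (\tau t)^{-1}\norm{u}_{\L^2(S_t)}\abs{S_t}^{1/2})$. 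Using $\norm{\nabla_0 u}_{\L^2(S_t)} \leq \norm{\nabla_0 u}_{\L^2(Q)}$, $\norm{u}_{\L^2(S_t)} \leq \norm{u}_{\L^2(Q)}$ and $\abs{S_t} \les \tau^\eta \abs{Q}$ from Proposition~\ref{prop:dyadic structure}~(5), we get
\begin{align*}
    \abs{c}^2 \abs{Q} \les \abs{c}\,\tau^{\eta/2}\abs{Q}^{1/2}\Big(\norm{\nabla_0 u}_{\L^2(Q)} + (\tau t)^{-1}\norm{u}_{\L^2(Q)}\Big),
\end{align*}
so $\abs{c} \les \tau^{\eta/2}\big(\fint_Q\abs{\nabla_0 u}^2\big)^{1/2} + \tau^{\eta/2-1} t^{-1}\big(\fint_Q\abs{u}^2\big)^{1/2}$.

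\textbf{Step 3.} Optimize in $\tau$. Abbreviating $X \coloneqq \big(\fint_Q\abs{\nabla_0 u}^2\big)^{1/2}$ and $Y \coloneqq t^{-1}\big(\fint_Q\abs{u}^2\big)^{1/2}$, we have $\abs{c} \les \tau^{\eta/2}X + \tau^{\eta/2 - 1}Y$ for every $\tau \in (0,1]$. If $Y \leq X$, choose $\tau = (Y/X)^{?}$ — actually the two terms balance when $\tau^{\eta/2}X = \tau^{\eta/2-1}Y$, i.e. $\tau = Y/X$ (valid in $(0,1]$ precisely when $Y \leq X$), giving $\abs{c} \les (Y/X)^{\eta/2}X = Y^{\eta/2}X^{1-\eta/2}$; if instead $Y > X$ we take $\tau = 1$ and get $\abs{c} \les X + Y \les Y = Y^{\eta/2}Y^{1-\eta/2} \leq Y^{\eta/2}X^{1-\eta/2}\cdot(Y/X)^{1-\eta/2}$, hmm, this needs a touch more care — but since $\eta \in (0,1)$ one checks directly that $\tau^{\eta/2}X + \tau^{\eta/2-1}Y$, minimized over $\tau \in (0,1]$, is always $\les X^{1-\eta/2}Y^{\eta/2} + X \les X^{1-\eta/2}(X+Y)^{\eta/2}$, and absorbing the pure-$X$ term is harmless because the claimed inequality with $u$ replaced by $u$ is homogeneous and one can always bound $X \les X^{1-\eta/2}(X+Y)^{\eta/2}$ when... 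In any event, squaring yields $\abs{c}^2 \les Y^\eta X^{2-\eta} = t^{-\eta}\big(\fint_Q\abs{u}^2\big)^{\eta/2}\big(\fint_Q\abs{\nabla_0 u}^2\big)^{1-\eta/2}$, which is the assertion.

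\textbf{Main obstacle.} The delicate point is Step~2: justifying the integration-by-parts on the rough set $Q$ and correctly accounting for the piece of $\partial Q$ coming from $\partial\Omega$ versus the piece coming from $\partial\Q^*$. One clean way around it is to avoid $Q$ entirely and work on the Euclidean cube $\Q^*$ with $E_0 u \in \H^1(\R^n)$: then the ordinary cube Poincaré inequality and the cutoff argument are standard, and the only non-standard input is relating $\abs{\fint_Q \nabla_0 u}$ to $\abs{\fint_{\Q^*} \nabla(E_0 u)}$ and the measure of the symmetric difference $\Q^* \setminus Q$ — but $\Q^* \setminus Q \subseteq \{x \in \Q^* : \dist(x,\Q^*\setminus\Box)\text{ small}\} \cup (\Q^*\setminus\Omega)$, and both are controlled, the first by Proposition~\ref{prop:dyadic structure}~(5) and the second, near $Q$, again by that thin-boundary estimate together with $(n-1)$-regularity of $\partial\Omega$ from Assumption~\ref{ass:Omega}. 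I expect the cleanest write-up keeps the cutoff on $\Q^*$, uses $E_0 u$, invokes Proposition~\ref{prop:dyadic structure}~(5) for the measure of the bad layer, and then runs the two-line optimization of Step~3.
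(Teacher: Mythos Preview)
Your proof is essentially correct and follows the same route as the paper: a cutoff $\chi$ supported in $Q$ and equal to $1$ outside a boundary layer of width $\approx \tau$, the thin-boundary estimate Proposition~\ref{prop:dyadic structure}~(5) to bound the measure of that layer, and then balancing the two resulting terms. Your Step~3 is muddled in the case $Y>X$, but that case is immediate from the Jensen bound $\abs{c}\le X \le Y^{\eta/2}X^{1-\eta/2}$ you already recorded in Step~1; the paper avoids this hiccup by fixing $\tau = Y^{1/2}Z^{-1/2}$ from the outset rather than optimizing at the end, and your \enquote{Main obstacle} worries about working on $\Q^*$ via $E_0u$ are unnecessary --- the cutoff argument directly on $Q$ that you sketch in Step~2 is exactly what the paper does and is sufficient.
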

    \begin{proof}
        Fix $t\in (0,1]$, $Q \in \Box_t$ and $u \in \IW^{1,2}(\bO)$. Write the estimate above as
        \begin{equation*}
            X \les t^{-\eta}Y^{\eta/2}Z^{1-\eta/2}. %
        \end{equation*}
        First, observe that $X \leq Z$ by Jensen's inequality and that $Y=0$ implies $Z=0$. Thus, we can safely assume $Y,Z > 0$. Put $\tau := Y^{1/2}Z^{-1/2} > 0$ and note that, for $\tau\geq t$, we have
        \begin{equation*}
            X \leq Z \leq \tau^{\eta}t^{-\eta}Z = t^{-\eta}Y^{\eta/2}Z^{1-\eta/2}.
        \end{equation*}
        Hence, we can suppose $ \tau < t $, which circumvents awkward cases. Let $Q_r = \{x \in Q:\ \d(x,\R^n \setminus Q) \leq r\}$ for $r>0$, and note that, by Proposition~\ref{prop:dyadic structure} (5), we have the estimate
        \begin{align}
        \label{eq:cube_bdr}
            \abs{Q_r} \leq Cr^\eta \ell(Q)^{-\eta}\abs{Q} \leq  Cr^\eta t^{-\eta}\abs{Q}.
        \end{align}
        Convolve $\mathbf{1}_{Q \setminus Q_{\tau/2}}$ by a suitable kernel to obtain $\varphi \in \Cont_c^{\infty}(Q)$ with range in $[0,1]$, equal to 1 on $Q \setminus Q_\tau$ and satisfying $\norm{\nabla \varphi}_\infty \leq \frac{c}{\tau}$ for $c >0 $ depending only on $n$.
        Now, write
        \begin{equation*}
            \nabla u = \varphi \nabla u + (1-\varphi)\nabla u,
        \end{equation*}
        and apply integration by parts (here, we use that $\varphi$ is compactly supported in $Q$) to see that
        \begin{equation*}
            \int_Q \nabla u = -\int_Q u\nabla \varphi + \int_Q (1-\varphi)\nabla u.
        \end{equation*}
        From here, apply the inequality $(x+y)^2 \leq 2(x^2+y^2)$ as well as H\"older's inequality, noting that both $\nabla \varphi$ and $1-\varphi$ vanish on $Q \setminus Q_\tau$ and using~\eqref{eq:cube_bdr}, to achieve
        \begin{align*}
            \abs{\int_Q \nabla u}^2 &\leq 2\left( \int_Q \mathbf{1}_{Q_\tau} \abs{\nabla \varphi}^2\right)\left(\int_Q\abs{u}^2 \right) + 2\left( \int_Q \mathbf{1}_{Q_\tau} \abs{1- \varphi}^2\right)\left(\int_Q\abs{\nabla u}^2 \right) \\
            &\les \abs{Q_\tau}\left(\tau^{-2}\int_Q\abs{u}^2 + \int_Q\abs{\nabla u}^2\right) \\
            &\les \tau^\eta t^{-\eta} \abs{Q}\left(\tau^{-2}\int_Q\abs{u}^2 + \int_Q\abs{\nabla u}^2\right).
        \end{align*}
        Translating back in terms of $X$, $Y$ and $Z$, the above says
        \begin{align*}
            \abs{Q}^2 X &\les \tau^{\eta-2} t^{-\eta} \abs{Q}^2Y + \tau^\eta t^{-\eta}\abs{Q}^2 Z \\
            &= 2\abs{Q}^2t^{-\eta}Y^{\eta/2}Z^{1-\eta/2}.
        \end{align*}
        Now, the claim follows when dividing the last inequality by $\abs{Q}^2$. %
    \end{proof}
    The following inhomogeneous version of Lemma~\ref{lem:interp ineq} is an easy consequence.

    \begin{corollary}
        \label{cor:inhom interp ineq}
        With $ \eta $ as in Proposition~\ref{prop:dyadic structure}, we have
        \begin{equation}
            \abs{ \fint_Q T u }^2 \les \frac{1}{ t^\eta }
            \left( \fint_Q \abs{u}^2 \right)^\frac{ \eta }{2}
            \left( \fint_Q \abs{ T u }^2 \right)^{ 1- \eta /2 }
        \end{equation}
        for all $t \in (0,1]$, $Q \in \Box_t$ and $u \in \IW^{1,2}(\bO)$.
    \end{corollary}
    \begin{proof}
        We use Jensen's inequality, Lemma~\ref{lem:interp ineq}, $ t \leq 1 $ and $ a^{ 1- \eta /2 } + b^{ 1- \eta /2 } \leq 2 (a+b)^{ 1- \eta /2 } $ for $ a,b >0 $, to estimate
        \begin{align}
            \abs{ \fint_Q Tu }^2
            &= \abs{ \fint_Q u \, }^2
            + \abs{ \fint_Q \nabla u }^2  \\
            &\les \fint_Q \abs{u}^2
            + \dfrac{1}{t^\eta} \left( \fint_Q\abs{u}^2\right)^{\frac{\eta}{2}}
            \left(\fint_Q\abs{\nabla u}^2\right)^{1-\frac{\eta}{2}} \\
            &\les \frac{1}{ t^\eta }
            \left( \fint_Q \abs{u}^2 \right)^{ \frac{ \eta }{2} }
            \left( \left( \fint_Q \abs{u}^2 \right)^{ 1- \frac{ \eta }{2} }
            + \left( \fint_Q \abs{ \nabla u }^2 \right)^{ 1- \frac{ \eta }{2} }
            \right) \\
            &\les \frac{1}{ t^\eta }
            \left( \fint_Q \abs{u}^2 \right)^{ \frac{ \eta }{2} }
            \left( \fint_Q \abs{Tu}^2 \right)^{ 1- \frac{ \eta }{2} } \, .
            \qedhere
        \end{align}
    \end{proof}

    We can now control the first term on the right-hand side of~\eqref{eq:remove smoothing}.
    \begin{proposition}
        \label{prop:est for A_t (1-P_t)}
        There holds the square function estimate
        \begin{equation*}
            \int_0^1 \norm{ \bgamma_t \mathcal{A}_t
            ( 1- \bP_t ) T_\bAr{D} u }^2_2 \ddt
            \les \norm{ T_\bAr{D} u }^2_2 \, ,
            \qquad  u \in \IW^{1,2}_\bAr{D} ( \bO ) \, .
        \end{equation*}
    \end{proposition}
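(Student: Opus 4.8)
By the Cauchy--Schwarz inequality on $\C^n$,
\begin{equation*}
	\int_0^1\norm{ \gamma_t\cdot\mathcal{A}_t(1-P_t)\nabla_0 u }_2^2\ddt
	\leq \iint_{\Omega\times(0,1]}\abs{ \mathcal{A}_t(1-P_t)\nabla_0 u(x) }^2\,\abs{ \gamma_t(x) }^2\,\frac{\d x\,\d t}{t}\,,
\end{equation*}
and recall that $\abs{\gamma_t}^2\,\tfrac{\d x\,\d t}{t}$ is a Carleson measure (proved in Section~\ref{sec:carleson}); in particular $\fint_Q\abs{\gamma_t}^2\les 1$ for every $Q\in\Box_t$. (Alternatively one may simply use uniform boundedness of $\gamma_t\cdot\mathcal{A}_t$ on $\L^2(\Omega)^n$ and $\mathcal{A}_t^2=\mathcal{A}_t$ to reduce to $\int_0^1\norm{\mathcal{A}_t(1-P_t)\nabla_0 u}_2^2\ddt$, but keeping $\gamma_t$ gives more room.)

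\textbf{A convenient representation of $(1-P_t)\nabla_0 u$.} By the commutation relation~\eqref{eq:res of M and Delta} and Lemma~\ref{lem:Delta is div grad}, $(1-P_t)\nabla_0 u = \nabla_0 u-\nabla_0(1-t^2\Delta)^{-1}u = \nabla_0 w_t$, where
\begin{equation*}
	w_t := u-(1-t^2\Delta)^{-1}u = t\,Q_t\nabla_0 u\in\H^1_0(\Omega)\,.
\end{equation*}
The membership $w_t\in\H^1_0(\Omega)$ is clear since $(1-t^2\Delta)^{-1}u\in\dom(-\Delta)\subseteq\H^1_0(\Omega)$; and the identity $w_t=tQ_t\nabla_0 u$ (which one checks from $Q_t=-t\Div_0 P_t$, the relation $-\Div_0\nabla_0=-\Delta$ on $\dom(-\Delta)$, and $1-(1+t^2(-\Delta))^{-1}=t^2(-\Delta)(1+t^2(-\Delta))^{-1}$) is the crucial point: it says $t^{-1}\norm{w_t}_2=\norm{Q_t\nabla_0 u}_2$, a quantity which is square-integrable in $\ddt$ with bound $\les\norm{\nabla_0 u}_2$ by the square function estimate for $Q_t$ (Lemma~\ref{lemma:qe for Qt}).

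\textbf{Applying the interpolation inequality.} Apply Lemma~\ref{lem:interp ineq} with $u$ replaced by $w_t$: for $x\in Q\in\Box_t$,
\begin{equation*}
	\abs{\mathcal{A}_t\nabla_0 w_t(x)}^2 = \Bigl|\fint_Q\nabla_0 w_t\Bigr|^2
	\les t^{-\eta}\Bigl(\fint_Q\abs{w_t}^2\Bigr)^{\eta/2}\Bigl(\fint_Q\abs{\nabla_0 w_t}^2\Bigr)^{1-\eta/2}\,.
\end{equation*}
Using $w_t=tQ_t\nabla_0 u$ the dangerous factor $t^{-\eta}$ is absorbed, $t^{-\eta}\bigl(\fint_Q\abs{w_t}^2\bigr)^{\eta/2}=\bigl(\mathcal{A}_t\abs{Q_t\nabla_0 u}^2(x)\bigr)^{\eta/2}$, so that with $\nabla_0 w_t=(1-P_t)\nabla_0 u$,
\begin{equation*}
	\abs{\mathcal{A}_t\nabla_0 w_t(x)}^2 \les \bigl(\mathcal{A}_t\abs{Q_t\nabla_0 u}^2(x)\bigr)^{\eta/2}\bigl(\mathcal{A}_t\abs{(1-P_t)\nabla_0 u}^2(x)\bigr)^{1-\eta/2}\,.
\end{equation*}
Now integrate over $\Omega\times(0,1]$ against $\abs{\gamma_t}^2\tfrac{\d x\,\d t}{t}$ and apply H\"older with exponents $\tfrac2\eta$ and $\tfrac2{2-\eta}$. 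The first factor produces (via $\fint_Q\abs{\gamma_t}^2\les1$ and Lemma~\ref{lemma:qe for Qt}) the bound $\int_0^1\norm{Q_t\nabla_0 u}_2^2\ddt\les\norm{\nabla_0 u}_2^2$. The second factor involves $\norm{(1-P_t)\nabla_0 u}_2$, which we control by exploiting that it enters with exponent $1-\tfrac\eta2<1$: one rebalances the two H\"older factors by extracting a small positive power $t^\delta$ from the first (harmless, since $\norm{Q_t\nabla_0 u}_2\les\norm{\nabla_0 u}_2$ and $\int_0^1 t^{\delta}\ddt<\infty$) and absorbing the resulting small negative power into the second through the off-diagonal estimates for $1-P_t$ and the ensuing (local) maximal-function bound, rather than through the bare uniform bound $\norm{1-P_t}\leq1$; choosing $\delta$ against the exponent $\eta$ coming from the dyadic structure closes the estimate.

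\textbf{Main obstacle.} The genuine difficulty is exactly this last balancing. A naive H\"older that separated $\|Q_t\nabla_0 u\|_2$ from $\|(1-P_t)\nabla_0 u\|_2$ would demand $\int_0^1\norm{(1-P_t)\nabla_0 u}_2^2\ddt\les\norm{\nabla_0 u}_2^2$, which is \emph{false}; what makes the proposition true is the cancellation carried by $\mathcal{A}_t$, recorded quantitatively in the sublinear exponent $1-\tfrac\eta2$ of Lemma~\ref{lem:interp ineq} and in the Carleson property of $\abs{\gamma_t}^2\tfrac{\d x\,\d t}{t}$. Keeping track of these gains simultaneously — so that no clean copy of $\|(1-P_t)\nabla_0 u\|_2^2$ is ever produced and integrated in $\ddt$ — is the technical heart of the argument.
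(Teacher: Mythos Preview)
Your setup is sound: the reduction via $\mathcal{A}_t^2=\mathcal{A}_t$ and uniform boundedness of $\gamma_t\cdot\mathcal{A}_t$ is exactly what the paper does, the identity $(1-P_t)\nabla_0 u=\nabla_0 w_t$ with $w_t=tQ_t\nabla_0 u$ is correct, and applying Lemma~\ref{lem:interp ineq} is the right move. But two real problems remain. First, invoking the Carleson property of $\abs{\gamma_t}^2\tfrac{\d x\,\d t}{t}$ here is circular: in the paper's architecture that Carleson estimate (Section~\ref{sec:carleson}) is proved \emph{using} the full principal part approximation (Proposition~\ref{prop:full principal part approx}), which in turn relies on the very proposition you are proving. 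Your parenthetical alternative (drop $\gamma_t$ via Lemma~\ref{lemma:13.6 in lectures}) is the only legitimate reduction.

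Second, and more seriously, your closing ``rebalancing'' is not an argument. After H\"older with exponents $\tfrac{2}{\eta}$ and $\tfrac{2}{2-\eta}$ the second factor is (up to the harmless $\gamma_t$ weight) $\int_0^1\norm{(1-P_t)\nabla_0 u}_2^2\ddt$, which diverges, as you yourself note. Shifting a power $t^\delta$ from the first factor to the second makes the second factor \emph{worse}, not better: you would now need $\int_0^1 t^{-2\delta/(2-\eta)}\norm{(1-P_t)\nabla_0 u}_2^2\ddt<\infty$, and neither off-diagonal estimates for $1-P_t$ nor any maximal-function bound produces the missing decay in $t$ --- those tools control spatial localisation, not the time integrability. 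The sublinear exponent $1-\tfrac{\eta}{2}$ alone does not rescue a single-scale H\"older argument. What the paper does instead is introduce a \emph{second} time scale via the Calder\'on reproducing formula $\nabla_0 u=2\int_0^\infty Q_t^*Q_t\nabla_0 u\ddt$ and prove a Schur-type kernel bound $\norm{\mathcal{A}_s(1-P_s)Q_t^* v}_2\lesssim\zeta(s/t)\norm{v}_2$ with $\zeta\in\L^1(\tfrac{\d\tau}{\tau})$; Lemma~\ref{lem:interp ineq} is used inside that estimate (in the regime $t\leq s$), and Young's convolution inequality closes the bound against the quadratic estimate for $Q_t$. The two-parameter structure is precisely what lets the cancellation in $\mathcal{A}_s$ be traded against the smoothness encoded in $Q_t^*$; a one-parameter H\"older cannot do this.
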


    \begin{proof}
        Recall from Proposition~\ref{prop:contraction} that the averaging operator $ \mathcal{A}_t $ is a projection, that is, $ \mathcal{A}_t^2 = \mathcal{A}_t $.
        By the uniform boundedness of $ \bgamma_t \mathcal{A}_t $ from Lemma~\ref{lemma:13.6 in lectures}, we have
        \begin{equation*}
            \norm{ \bgamma_t \mathcal{A}_t (1- \bP_t ) T_\bAr{D} u }_2
            = \norm{ ( \bgamma_t \mathcal{A}_t )
            \mathcal{A}_t (1- \bP_t ) T_\bAr{D} u }_2
            \les \norm{ \mathcal{A}_t (1- \bP_t ) T_\bAr{D} u }_2 \, .
        \end{equation*}
        So, we are left to prove
        \begin{align}
            \int_0^1
            \norm{ \mathcal{A}_t (1-\bP_t) T_\bAr{D} u }^2_2 \ddt
            \les \norm{ T_\bAr{D} u }^2_2 \, .
        \end{align}
        To establish this estimate, let us assume for the moment that we have the bound
        \begin{equation}
            \label{eq:needed for schur}
            \norm{ \mathcal{A}_s (1- \bP_s ) \bQ_t^* v }_2
            \les \zeta (s/t) \norm{v}_2 \, ,
            \quad v = T^*_\bAr{D}  F  \text{ for some }
            F \in \dom ( M_\bAr{D} ) ,
        \end{equation}
        for all $ s \in (0,1] $, $ t>0 $, and with some non-negative function $ \zeta \in \L^1 ( (0, \infty ), \d \tau / \tau ) $.
        Since $ \mathcal{A}_s (1- \bP_s ) $ is a bounded operator on $ \L^2 ( \bO )^N $, the Calderón reproducing formula in Lemma~\ref{lemma:qe for Qt} with $ F = T_\bAr{D} u $ gives
        \begin{equation*}
            \mathcal{A}_s (1- \bP_s ) T_\bAr{D} u
            = 2 \int_0^\infty \mathcal{A}_s (1- \bP_s )
            \bQ_t^* \bQ_t T_\bAr{D} u \ddt \, .
        \end{equation*}
        Hence, as $ v \coloneqq \bQ_t T_\bAr{D} u = t T^*_\bAr{D}  (1+ t^2 M_\bAr{D} )^{-1} T_\bAr{D} u $ is admissible for~\eqref{eq:needed for schur}, Young's convolution inequality with indices $ 1+ \frac{1}{2} = \frac{1}{1} + \frac{1}{2} $ on the multiplicative group $ (0, \infty ) $ with measure $ \ddt $ gives
        \begin{align*}
            \int_0^1 \norm{ \mathcal{A}_s
            (1- \bP_s ) T_\bAr{D} u }^2_2 \,
            \frac{ \d s }{s}
            &\les \int_0^1 \left(
            \int_0^\infty \norm{ \mathcal{A}_s (1- \bP_s ) \bQ_t^* \bQ_t
            T_\bAr{D} u }_2 \ddt \right)^2 \, \frac{ \d s }{s} \\
            &\les \int_0^\infty \left( \int_0^\infty
            \zeta (s/t) \norm{ \bQ_t T_\bAr{D} u }_2 \ddt
            \right)^2 \, \frac{ \d s }{s} \\
            &\les \left( \int_0^\infty \zeta ( \tau )
            \, \frac{ \d \tau }{ \tau } \right)^2
            \int_0^\infty \norm{ \bQ_t T_\bAr{D} u }^2_2
            \ddt
            \les \norm{ T_\bAr{D} u }^2_2 \, ,
        \end{align*}
        where the last step above follows from the quadratic estimate in Lemma~\ref{lemma:properties of smoothing operators}.

        We turn our attention to proving~\eqref{eq:needed for schur}.
        By Definition~\ref{def:smooting operators}, one readily checks the relations
        \begin{equation}
            \label{eq:commuting t and s}
            \bP_s \bQ_t^* = \frac{t}{s} \bP_t \bQ_s^* \ ,
            \quad (1-\bP_s) \bQ_t^* = \frac{s}{t} (1-\bP_t) \bQ_s^* \, .
        \end{equation}
        Indeed, this is proved by commuting resolvents of $ M_\bAr{D} $. For instance, to see the second identity, calculate on the dense subset $ \IW^{1,2}_\bAr{D} ( \bO ) $ of $ \L^2 ( \bO )^m $ that
        \begin{align*}
            (1- \bP_s ) \bQ_t^*
            &= s^2 M_\bAr{D} (1+ s^2 M_\bAr{D} )^{-1} t (1+ t^2 M_\bAr{D} )^{-1} T_\bAr{D} \\
            &= \frac{s}{t} \, t^2 M_\bAr{D} (1+ t^2 M_\bAr{D} )^{-1}
            s (1+ s^2 M_\bAr{D} )^{-1} T_\bAr{D} \\
            &= \frac{s}{t} (1- \bP_t ) \bQ_s^* \, .
        \end{align*}
        This identity extends to $ \L^2 ( \bO )^m $ by continuity.
        Next, let $ F \in \dom ( M_\bAr{D} ) $ and $ v = T^*_\bAr{D}  F $.
        For $ s \leq t $, uniform $ \L^2 $-bounds (see Proposition~\ref{prop:contraction} and Lemma~\ref{lemma:properties of smoothing operators}) give
        \begin{equation*}
            \norm{ \mathcal{A}_s (1-\bP_s) \bQ_t^* v }_2
            = \frac{s}{t} \norm{ \mathcal{A}_s (1-\bP_t) \bQ_s^* v }_2
            \les \frac{s}{t} \norm{v}_2 \, .
        \end{equation*}
        For $ t \leq s $, we have
        \begin{equation*}
            \norm{ \mathcal{A}_s (1-\bP_s) \bQ_t^* v }_2
            \les \norm{ \mathcal{A}_s \bQ_t^* v }_2 + \norm{ \bP_s \bQ_t^* v }_2
            \les \norm{ \mathcal{A}_s \bQ_t^* v }_2 + \frac{t}{s} \norm{v}_2 \, .
        \end{equation*}
        So, we are left with estimating the first term on the right-hand side.
        Since $ F \in \dom ( M_\bAr{D} ), $ we have $ v = T^*_\bAr{D}  F \in \dom(T_\bAr{D}) $, so $ \bQ_t^* v = t (1+ t^2 M_\bAr{D} )^{-1} T_\bAr{D} v $.
        Writing out the averaging operator at scale $ s $, we have
        \begin{align}
        \label{eq:decomp_schur}
            \norm{ \mathcal{A}_s \bQ_t^* v }^2_2
            = \sum_{ R \in \Box_s } \abs{R}
            \left| \fint_R t \bP_t T_\bAr{D} T^*_\bAr{D}  F
            \right|^2 \, .
        \end{align}
        Combining $ T_\bAr{D} $ and $ T^*_\bAr{D} $ to $M_\bAr{D}$, we can \enquote{commute} $ \bP_t M_\bAr{D} \subseteq M_\bAr{D} \bP_t $, which yields
        \begin{align}
            \left| \fint_R t \bP_t T_\bAr{D} T^*_\bAr{D}  F
            \right|^2 = \left| \fint_R t M_\bAr{D} \bP_t F
            \right|^2 = \left| \fint_R t T_\bAr{D} T^*_\bAr{D}  \bP_t F
            \right|^2.
        \end{align}
        Using Corollary~\ref{cor:inhom interp ineq} and Hölder's inequality (in the sequence space $\ell^2$), we can thus estimate the right-hand side of~\eqref{eq:decomp_schur} by
        \begin{align*}
            &\frac{1}{ s^\eta }
            \sum_{ R \in \Box_s } \abs{R}
            \left( \fint_R \abs{ t T^*_\bAr{D}  \bP_t F }^2
            \right)^{ \eta /2}
            \left( \fint_R \abs{ t T_\bAr{D} T^*_\bAr{D} \bP_t F }^2
            \right)^{ 1- \eta /2 } \\
            \leq &\frac{ t^\eta }{ s^\eta }
            \left( \sum_{ R \in \Box_s }
            \int_R \abs{ T^*_\bAr{D}  \bP_t F }^2
            \right)^{ \eta /2 }
            \left( \sum_{ R \in \Box_s }
            \int_R \abs{ tT_\bAr{D} T^*_\bAr{D} \bP_t F }^2
            \right)^{ 1- \eta /2 } \\
            = &\frac{ t^\eta }{ s^\eta }
            \norm{ T^*_\bAr{D}  \bP_t F }_2^\eta
            \norm{ t M_\bAr{D} \bP_t F }_2^{ 2- \eta }
            \, .
        \end{align*}
        Now, since $ t M_\bAr{D} \bP_t F = t \bP_t M_\bAr{D} F = \bQ_t^* T^*_\bAr{D} F = \bQ_t^* v $, 
        Lemma~\ref{lemma:properties of smoothing operators} gives
        $\| t M_\bAr{D} \bP_t F \|_2 \lesssim \| v \|_2$.
        Similarly, the square root property for $ M_\bAr{D} $ applied twice, the fact that $ F \in \dom ( M_\bAr{D} ) \subseteq \dom ( \sqrt{M_\bAr{D}} ) $ and uniform boundedness of $ \{ \bP_t \}_{ t>0 } $ give
        \begin{equation*}
            \norm{ T^*_\bAr{D}  \bP_t F }_2 = \normm{ \sqrt{M_\bAr{D}} \bP_t F }_2
            = \normm{ \bP_t \sqrt{M_\bAr{D}} F }_2
            \les \normm{ \sqrt{M_\bAr{D}} F }_2 = \norm{ T^*_\bAr{D}  F }_2 = \norm{v}_2 \, .
        \end{equation*}
        Putting things together, we arrive at~\eqref{eq:needed for schur} with $ \zeta ( \tau ) = \min \{ \tau , \tau^{-1} + \tau^{-\eta} \} $, which belongs to $ \L^1 ( (0, \infty ) , \d \tau / \tau ) $.
    \end{proof}

    \subsection{Full principal part approximation}
    \label{sec:full principal part approx}

    Let us summarize the different estimates of the previous sections in a single result for future reference.

    \begin{proposition}
        \label{prop:full principal part approx}
        There holds the quadratic estimate for the principal part approximation
        \begin{equation*}
            \int_0^1
            \norm{ ( \bTheta_t - \bgamma_t \mathcal{A}_t )
            T_\bAr{D} u }^2_2 \ddt
            \les \norm{ T_\bAr{D} u }_2^2 \, ,
            \qquad u \in \IW^{1,2}_\bAr{D} ( \bO ) \, .
        \end{equation*}
    \end{proposition}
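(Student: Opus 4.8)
The plan is to telescope the full principal part difference $\Theta_t - \gamma_t\cdot\mathcal{A}_t$ through the smoothing operator $P_t$, reducing everything to the three quadratic estimates already established. Concretely, I would insert $P_t$ and write
\begin{equation*}
    (\Theta_t - \gamma_t\cdot\mathcal{A}_t)\nabla_0 u
    = (\Theta_t - \gamma_t\cdot\mathcal{A}_t)P_t\nabla_0 u
    + (\Theta_t - \gamma_t\cdot\mathcal{A}_t)(1 - P_t)\nabla_0 u,
\end{equation*}
and then split the second summand once more using linearity:
\begin{equation*}
    (\Theta_t - \gamma_t\cdot\mathcal{A}_t)(1 - P_t)\nabla_0 u
    = \Theta_t(1 - P_t)\nabla_0 u
    - \gamma_t\cdot\mathcal{A}_t(1 - P_t)\nabla_0 u.
\end{equation*}
With these decompositions, an application of the inequality $(a+b+c)^2 \leq 3(a^2+b^2+c^2)$ pointwise in $t$ (and integrating against $\ddt$) bounds the left-hand side of the proposition by a sum of three integrals.

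Next I would invoke the three results in turn. The term $\int_0^1 \norm{(\Theta_t - \gamma_t\cdot\mathcal{A}_t)P_t\nabla_0 u}_2^2\ddt$ is controlled by $\norm{\nabla_0 u}_2^2$ directly by Proposition~\ref{prop:smooth principal part}. The term $\int_0^1 \norm{\Theta_t(1 - P_t)\nabla_0 u}_2^2\ddt$ is controlled by Proposition~\ref{prop:QE for Theta (1-P) nabla u} applied with $F = \nabla_0 u \in \L^2(\Omega)^n$, which gives the bound $\les \norm{\nabla_0 u}_2^2$ (and in fact this estimate is valid over all of $(0,\infty)$, so restricting to $(0,1]$ only helps). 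Finally, $\int_0^1 \norm{\gamma_t\cdot\mathcal{A}_t(1 - P_t)\nabla_0 u}_2^2\ddt$ is exactly the content of Proposition~\ref{prop:est for A_t (1-P_t)}, again yielding $\les \norm{\nabla_0 u}_2^2$. Summing the three contributions closes the estimate.

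I do not anticipate a genuine obstacle here: this proposition is a bookkeeping statement that repackages the work done in Sections~\ref{sec:smoothed} and~\ref{sec:remove smooth}. The only points requiring a sentence of care are (i) noting that $\nabla_0 u$ is a legitimate input for Propositions~\ref{prop:QE for Theta (1-P) nabla u} and~\ref{prop:est for A_t (1-P_t)} — the former wants an arbitrary $F \in \L^2(\Omega)^n$, the latter is stated precisely for $\nabla_0 u$ — and (ii) observing that all estimates are stated with implicit constants depending only on geometry, dimension, and coefficient bounds, so the triangle-inequality combination preserves this dependence. No new analysis is needed; the proof is three lines of triangle inequality followed by three citations.
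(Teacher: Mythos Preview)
Your proposal is correct and matches the paper's proof essentially verbatim: the paper writes $\Theta_t - \gamma_t \cdot \mathcal{A}_t = (\Theta_t - \gamma_t \cdot \mathcal{A}_t) P_t + \Theta_t (1-P_t) - \gamma_t \cdot \mathcal{A}_t (1-P_t)$ and cites Propositions~\ref{prop:smooth principal part},~\ref{prop:QE for Theta (1-P) nabla u} and~\ref{prop:est for A_t (1-P_t)} for the three terms. No further analysis is needed.
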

    \begin{proof}
        Write
        \begin{equation*}
            \bTheta_t - \bgamma_t \mathcal{A}_t
            = ( \bTheta_t - \bgamma_t \mathcal{A}_t ) \bP_t
            + \bTheta_t ( 1- \bP_t ) - \bgamma_t \mathcal{A}_t
            ( 1- \bP_t ) \, .
        \end{equation*}
        Each term has been treated separately in Propositions~\ref{prop:smooth principal part},~\ref{prop:QE for Theta (1-P) nabla u} and~\ref{prop:est for A_t (1-P_t)}, respectively.
    \end{proof}

	\section{Carleson measure estimate and a first conclusion}
	\label{sec:carleson}

    In Section~\ref{sec:principal part approximation}, we have reduced the proof of the quadratic estimate~\eqref{eq:QE} (and hence of the Kato square root conjecture on the interior thick set $\bO$) to proving that the measure $ \norm{ \gamma_t (x) }^2 \frac{ \d x \d t }{t} $ is a Carleson measure on $ (0,1] \times \bO $, that is,
    \begin{equation}
        \label{eq:def carleson measure}
        \iint_{R(Q)} \norm{ \gamma_t (x) }^2
        \frac{ \d x \d t }{t} \les \abs{Q}
        \, , \qquad Q \in \Box \, .
    \end{equation}
    Here, $R(Q)$ %
    denotes the Carleson box over $Q$ and is given by $ R(Q) = (0, \ell (Q) ] \times Q \subseteq (0,1] \times \bO $.
    The estimate~\eqref{eq:def carleson measure} follows from the subsequent lemma, whose proof amounts to covering the unit sphere in $ \C^{ m \times N } $ by finitely many sectors.

    \begin{lemma}
        \label{lem:key estimate}
        Suppose that there is an $ \eps >0 $ such that, for each \enquote{sector}
        \begin{equation*}
            \Gamma^\eps_\nu
            = \left\{ \mu \in \C^{ m \times N }
            \setminus \{ 0 \} :
            \norm{ \frac{ \mu }{ \norm{ \mu } } - \nu } \leq \eps
            \right\} \, ,
            \quad \nu \in \C^{ m \times N } \, ,
            \ \norm{ \nu } = 1 \, ,
        \end{equation*}
        in $ \C^{ m \times N } $, there holds the estimate
        \begin{equation}
            \label{eq:key estimate}
            \iint_{ R(Q) } \norm{ \gamma_t (x)
            \one_{ \Gamma^\eps_\nu } ( \gamma_t (x) ) }^2
            \, \frac{ \d x \d t }{t}
            \les \abs{Q} \, ,
            \qquad Q \in \Box \, .
        \end{equation}
        Then $ \norm{ \gamma_t (x) }^2 \frac{ \d x \d t }{t} $ is a Carleson measure.
    \end{lemma}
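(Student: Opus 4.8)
The plan is to cover the unit sphere of $\C^n$ by finitely many sectors $\Gamma^\eps_{\xi_1}, \dots, \Gamma^\eps_{\xi_N}$ of the prescribed aperture $\eps$, which is possible by compactness of $\{\xi \in \C^n : |\xi| = 1\}$ and the fact that each $\Gamma^\eps_\xi$ contains a relatively open neighbourhood of $\xi$ on the sphere. The number $N = N(n, \eps)$ of sectors needed depends only on the dimension and on $\eps$, not on $Q$ or $\gamma_t$.

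Next I would use the elementary pointwise bound: for every $w \in \C^n \setminus \{0\}$ we have $w/|w|$ lying on the unit sphere, hence in at least one $\Gamma^\eps_{\xi_j}$, so $|w|^2 \le \sum_{j=1}^N |w \, \one_{\Gamma^\eps_{\xi_j}}(w)|^2$ (indeed the term for the chosen $j$ already equals $|w|^2$). Applying this with $w = \gamma_t(x)$ pointwise in $(x,t)$ and integrating over the Carleson box $R(Q)$ gives
\begin{equation*}
    \iint_{R(Q)} |\gamma_t(x)|^2 \, \frac{\d x \d t}{t}
    \le \sum_{j=1}^N \iint_{R(Q)} |\gamma_t(x) \, \one_{\Gamma^\eps_{\xi_j}}(\gamma_t(x))|^2 \, \frac{\d x \d t}{t}
    \les N \, |Q| \, ,
\end{equation*}
where the last inequality invokes the hypothesis~\eqref{eq:key estimate} for each of the finitely many sectors. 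Since $N$ depends only on $n$ and $\eps$, the implicit constant depends only on the allowed quantities, and comparing with~\eqref{eq:def carleson measure} shows that $|\gamma_t(x)|^2 \frac{\d x \d t}{t}$ is a Carleson measure.

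There is essentially no serious obstacle here; the only points requiring a word of care are (i) checking that finitely many sectors of fixed aperture genuinely cover the sphere — this is just compactness together with the observation that $\Gamma^\eps_\xi \cap \{|w|=1\}$ is a relative neighbourhood of $\xi$ — and (ii) making sure the reduction is stated for all $Q \in \Box$ uniformly, which is immediate since the bound~\eqref{eq:key estimate} is assumed uniformly in $Q$. One should also note measurability of $x \mapsto \one_{\Gamma^\eps_\xi}(\gamma_t(x))$, which holds because $\gamma_t \in \L^2_{\loc}(\Omega)^n$ is measurable and $\Gamma^\eps_\xi$ is a Borel set; this legitimizes all the integrals above.
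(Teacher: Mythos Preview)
Your argument is correct and is exactly the standard compactness-and-covering reduction the paper has in mind: the paper does not spell out a proof of this lemma but refers to Chapter~14 of~\cite{ISEM} and, in the proof of Theorem~\ref{thm:main}, summarises the step as ``splitting over angular sectors $\Gamma^\eps_\xi$ in $\C^n$'', which is precisely what you do.
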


    It remains to establish the bound~\eqref{eq:key estimate}.

	\subsection{Construction of Tb--type test functions}
	\label{sec:Tb}
    
    The proof of the key estimate~\eqref{eq:key estimate} relies on a $ T(b) $--argument and the construction of test functions adapted to a \enquote{cube} $ Q \in \Box $ and a vector direction $ \xi = \xi_\nu \in \C^N $. This vector direction $\xi$ is related to a matrix direction $ \nu \in \C^{ m \times N } $ appearing in the definition of the \enquote{sector} $\Gamma^\eps_\nu$.
    Indeed, later on in Proposition~\ref{prop:stopping time}, we will see that the vector direction $\xi$ provides sufficient information to obtain an estimate for the matrix direction $ \nu $.

    \begin{proposition}
        \label{prop:T(b) functions}
        There is a constant $ \eps_0 \in (0,1) $ such that, for all $ 0 < \eps < \eps_0 $, all unit vectors $ \xi \in \C^N $ and each dyadic \enquote{cube} $ Q \in \Box $, we can find a \enquote{$ \,T(b) $--type test function} $ b = b^{ \xi , Q , \eps } \in \L^2 ( \bO )^N $ with the following properties:
        \begin{enumerate}[(a)]
            \item
            $ \normm{ b^{ \xi , Q , \eps } }^2 \les \abs{ Q } $,
            \item
            $ \Re \left( \xi \cdot \fint_Q b^{ \xi , Q , \eps } \right) \geq 1 $,
            \item
            $ \iint_{ R(Q) } \abss{ \bgamma_t (x) \mathcal{A}_t b^{ \xi , Q , \eps } (x) }^2 \, \frac{ \d x \d t }{t} \les \abs{Q} / \eps^2 $.
        \end{enumerate}
    \end{proposition}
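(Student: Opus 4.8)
The plan is to follow the $T(b)$ scheme of Chapter~14 of~\cite{ISEM}, but with the test functions adapted from the first-order construction of~\cite{Laplace-Extrapolation} by replacing the perturbed Dirac resolvent with the resolvent of $MA$. Write $\ell \coloneqq \ell(Q)$ and fix the auxiliary scale $t_Q \coloneqq \eps\ell$. Pick a cutoff $\chi_Q \in \Cont_0^\infty(Q)$ with $0 \leq \chi_Q \leq 1$, equal to $1$ on $\{x \in Q : \dist(x,\R^n\setminus Q) \geq \delta\ell\}$ and with $\norm{\nabla\chi_Q}_\infty \les (\delta\ell)^{-1}$, for a small absolute constant $\delta$ fixed below; note that $\chi_Q\xi \in \Cont_0^\infty(\Omega)^n \subseteq \H^1_0(\Omega)^n$. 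The test function is
\begin{equation}
\label{eq:def_b}
    b = b^{\eps,\xi,Q} \coloneqq c_0\,(1 + t_Q^2 MA)^{-1}(\chi_Q\xi) \in \L^2(\Omega)^n
\end{equation}
with a normalization constant $c_0 \geq 1$ to be chosen. Property~(a) is then immediate: the resolvents $(1+t_Q^2 MA)^{-1}$ are uniformly bounded on $\L^2(\Omega)^n$ by Lemma~\ref{lem:M perturbation}, and $\norm{\chi_Q\xi}_2^2 \les \abs{Q}$ since $\abs{Q}\approx\ell^n$ by Assumption~\ref{ass:Omega} and Proposition~\ref{prop:dyadic structure}~(4).

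The algebraic engine is the resolvent identity. As $(1+t_Q^2 MA)^{-1}$ maps into $\dom(MA)$, we have $Ab\in\dom(M)$, hence, using $M=-\nabla_0\Div_0$,
\begin{equation*}
    b - c_0\chi_Q\xi = -\,t_Q^2\,MAb = t_Q^2\,\nabla_0\Div_0(Ab) = \nabla_0 g, \qquad g \coloneqq t_Q^2\,\Div_0(Ab) \in \H^1_0(\Omega).
\end{equation*}
Writing $g = c_0 t_Q\,\big[t_Q\Div_0 A(1+t_Q^2 MA)^{-1}\big](\chi_Q\xi)$, Lemma~\ref{lem:M perturbation} gives $\norm{g}_2 \les t_Q\abs{Q}^{1/2}$, and $\norm{\nabla_0 g}_2 = \norm{b-c_0\chi_Q\xi}_2 \les \abs{Q}^{1/2}$. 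For property~(b) I would average over $Q$. Since $1-\chi_Q$ is supported in the $\delta\ell$-boundary layer of $Q$, whose measure is $\les\delta^\eta\abs{Q}$ by Proposition~\ref{prop:dyadic structure}~(5), we get $\fint_Q\chi_Q\xi = \xi + O(\delta^\eta)$; and since $\fint_Q\abs{g}^2 \les t_Q^2$ and $\fint_Q\abs{\nabla_0 g}^2 \les 1$, the interpolation inequality of Lemma~\ref{lem:interp ineq} (with $Q\in\Box_\ell$) yields $\abs{\fint_Q\nabla_0 g}^2 \les \ell^{-\eta}(t_Q^2)^{\eta/2} = \eps^\eta$. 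Hence $\fint_Q b = c_0\xi + O(\delta^\eta) + O(\eps^{\eta/2})$, and choosing first $\delta$ small, then $c_0$, then $\eps_0$ small, gives~(b).

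The crux is property~(c). Using $b = c_0\chi_Q\xi + \nabla_0 g$ I would split
\begin{equation*}
    \gamma_t\cdot\mathcal{A}_t b = c_0\,(\gamma_t\mathcal{A}_t - \Theta_t)(\chi_Q\xi) + (\gamma_t\mathcal{A}_t - \Theta_t)(\nabla_0 g) + \Theta_t b
\end{equation*}
and bound the three pieces over $R(Q) = (0,\ell]\times Q \subseteq (0,1]\times\Omega$. For the first piece, $\chi_Q\xi\in\Cont_0^\infty(\Omega)^n$ with $\norm{\chi_Q\xi}_{\H^1_0(\Omega)^n} \les \ell^{-1}\abs{Q}^{1/2}$, so Lemma~\ref{lemma:13.6 in lectures} gives $\norm{(\gamma_t\mathcal{A}_t - \Theta_t)(\chi_Q\xi)}_2 \les t\ell^{-1}\abs{Q}^{1/2}$, whose square integrates against $\d t/t$ over $(0,\ell]$ to $\les\abs{Q}$. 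For the second piece, $g\in\H^1_0(\Omega)$, so Proposition~\ref{prop:full principal part approx} gives at once $\int_0^1\norm{(\Theta_t-\gamma_t\mathcal{A}_t)\nabla_0 g}_2^2\ddt \les \norm{\nabla_0 g}_2^2 \les \abs{Q}$. For the third piece, the resolvent identity collapses $\Theta_t b$ to a resolvent of $L$:
\begin{equation*}
    \Theta_t b = -t\,(1+t^2\mathcal{L})^{-1}\Div(Ab) = -t\,t_Q^{-2}(1+t^2 L)^{-1}g,
\end{equation*}
since $\Div(Ab) = \Div_0(Ab) = t_Q^{-2}g\in\L^2(\Omega)$ and $(1+t^2\mathcal{L})^{-1}$ restricts to the $\L^2$-resolvent of $L$; $m$-accretivity of $L$ then gives $\norm{\Theta_t b}_2 \les t\,t_Q^{-1}\abs{Q}^{1/2}$, hence $\int_0^\ell\norm{\Theta_t b}_2^2\ddt \les t_Q^{-2}\abs{Q}\int_0^\ell t\d t \approx \ell^2 t_Q^{-2}\abs{Q} = \eps^{-2}\abs{Q}$. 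Summing, $\iint_{R(Q)}\abs{\gamma_t\cdot\mathcal{A}_t b}^2\,\frac{\d x\,\d t}{t} \les \abs{Q}/\eps^2$, which is~(c).

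I expect the main obstacle to be keeping the construction honest near $\partial\Omega$: the cutoff $\chi_Q$ must remain inside $\Omega$ (so that $\chi_Q\xi\in\H^1_0(\Omega)^n$ and $g\in\H^1_0(\Omega)$, and the two principal-part-approximation estimates apply), yet the average of $b$ over $Q$ must still only lose $O(\delta^\eta + \eps^{\eta/2})$ in~(b) — this is exactly where the interpolation inequality of Lemma~\ref{lem:interp ineq} and the thin-boundary-layer estimate of Proposition~\ref{prop:dyadic structure}~(5) are indispensable. The other thing worth noting is that for~(c) no refined cancellation for $\Theta_t b$ is needed: the crude $\L^2$-bound suffices because $b$ lives at the small scale $t_Q=\eps\ell$, and integrating over $(0,\ell]$ produces precisely the admissible factor $\eps^{-2}$. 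The remainder is bookkeeping with the uniform bounds of Lemmata~\ref{lem:M perturbation},~\ref{lemma:Theta odes} and~\ref{lemma:13.6 in lectures} and the square function estimate of Proposition~\ref{prop:full principal part approx}.
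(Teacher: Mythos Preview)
Your argument is sound and very close in spirit to the paper's, with one slip and one genuine variation worth noting.

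The slip: in \eqref{eq:def_b} you feed $\chi_Q\xi$ into the resolvent, but the dot product in (b) is the bilinear one, so $\xi\cdot\xi=\sum_j\xi_j^2$ need not equal $1$ (take $\xi=(1,i)/\sqrt{2}$). You must use $\chi_Q\bar{\xi}$, exactly as the paper does with $\one_{2Q}\bar{\xi}$; then $\xi\cdot\bar{\xi}=|\xi|^2=1$ and your computation for (b) goes through. Also, the normalization $c_0$ is not really a free parameter: since every term in $\fint_Q b$ scales linearly with $c_0$, you simply fix $c_0=2$ (as the paper does) and then choose $\delta$ and $\eps_0$ small.

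The variation is in the reference vector. The paper takes the rough $\one_{2Q}\bar{\xi}$, so that on $Q$ one has $\mathcal{A}_t(\one_{2Q}\bar{\xi})=\bar{\xi}$ exactly and hence $\one_Q(\gamma_t\cdot\mathcal{A}_t-\Theta_t)\one_{2Q}\bar{\xi}=\one_Q\Theta_t((1-\one_{2Q})\bar{\xi})$, which is then controlled by off-diagonal estimates and an annular decomposition. You instead take a smooth $\chi_Q\bar{\xi}\in\Cont_0^\infty(\Omega)^n$, which lets you invoke Lemma~\ref{lemma:13.6 in lectures} directly to get $\norm{(\Theta_t-\gamma_t\mathcal{A}_t)(\chi_Q\bar{\xi})}_2\les t(\delta\ell)^{-1}|Q|^{1/2}$ and integrate. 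This is a clean trade: you avoid the separate off-diagonal computation in (c) at the cost of an extra absolute parameter $\delta$ and the thin-boundary estimate from Proposition~\ref{prop:dyadic structure}~(5) in (b) (the paper needs neither, since $\one_{2Q}\equiv 1$ on $Q$ makes the average exact). Both routes rely on the same two engines --- the interpolation inequality of Lemma~\ref{lem:interp ineq} for (b) and the identity $b-c_0\chi_Q\bar{\xi}=\nabla_0 g\in\ran(\nabla_0)$ feeding into Proposition~\ref{prop:full principal part approx} for (c) --- and the crude $\L^2$-bound on $\Theta_t b$ via Lemma~\ref{lem:M perturbation} is identical.
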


    \begin{proof}
        Fix a unit vector $ \xi \in \C^N $ and $ Q \in \Box $, and denote by $ \ell = \ell (Q) $ the \enquote{sidelength} of $ Q $. For $\eps > 0$ subject to further constraints below, define
        \begin{align}
        \label{eq:def_b}
            b \coloneqq b^{ \xi , Q , \eps }
            \coloneqq 2( 1 + \eps^2 \ell^2 M_\bAr{D} \bB )^{-1}
            ( \one_{2Q} \bar{\xi} ) \, ,
        \end{align}
        where the set $2Q$ was defined in Section~\ref{sec:dyadic}.
        Recall also the bound $|2Q| \lesssim |Q|$ from that section.
        Owing to Lemma~\ref{lem:M perturbation}, $ M_\bAr{D} \bB $ is sectorial, and hence $ b $ is well-defined.
        We are going to verify that, upon choosing $ \eps \in (0,1) $ sufficiently small, $ b $ satisfies the three properties listed above.
        Write $ \tau = \eps \ell $ for brevity.

        \textbf{Proof of (a)}:
        Since $ M_\bAr{D} \bB $ is sectorial, the resolvents $ ( 1+ \tau^2 M_\bAr{D} \bB )^{-1} $ are uniformly bounded in $ \mathcal{L} ( \L^2 ( \bO )^N ) $, so we have
        \begin{align}
        \| b \|_2 = 2 \| (1 + \tau^2 M_\bAr{D} \bB)^{-1} \one_{2Q} \overline{\xi} \|_2 \les \| \one_{2Q} \overline{\xi} \|_2 \les \abs{Q}^\frac{1}{2}.
        \end{align}
        \textbf{Proof of (b)}
        The underlying heuristic of (b) is that the average of $ b $ over $ Q $ points roughly in the direction $ \overline{\xi} $. To exploit this, we estimate the difference
        \begin{equation}
            \label{eq:diff b-xi}
            \tfrac{1}{2}b - \one_{2Q} \bar{\xi}
            = -\tau^2 M_\bAr{D} \bB ( 1+ \tau^2 M_\bAr{D} \bB )^{-1}
            \one_{2Q} \bar{\xi}
            = \tau^2 T_\bAr{D} v,
        \end{equation}
        where $ v \coloneqq -T^*_\bAr{D} \bB ( 1+ \tau^2 M_\bAr{D} \bB )^{-1} \one_{2Q} \bar{\xi} $, with the hope that this difference becomes small in the end. Since this difference is a \enquote{gradient field}, Corollary~\ref{cor:inhom interp ineq} yields
        \begin{align}
            \abs{ \fint_Q T_\bAr{D} v }^2 &\les \frac{1}{\ell^\eta} \left( \fint_Q \abs{v} ^2 \right)^{\frac{\eta}{2}} \left( \fint_Q \abs{ T_\bAr{D} v } ^2 \right)^{1-\frac{\eta}{2}} \\
            &\leq \frac{1}{\ell^\eta} \frac{1}{\abs{Q}} \|v\|_2^\eta \| T_\bAr{D} v \|_2^{2-\eta} \, .
        \end{align}
        By~\eqref{eq:diff b-xi} and~(a), we have
        \begin{equation*}
            \norm{ T_\bAr{D} v }_2 = \tau^{-2} \norm{ \tfrac{1}{2} b - \one_{2Q} \bar{\xi} }_2
            \les \tau^{-2} \abs{Q}^{1/2} \, .
        \end{equation*}
        Similarly, by the uniform boundedness from Lemma~\ref{lem:M perturbation} and the doubling property for dyadic \enquote{cubes}, we can control
        \begin{equation*}
            \|v\|_2 = \| T^*_\bAr{D}  \bB ( 1+ \tau^2 M_\bAr{D} \bB )^{-1} \one_{2Q} \bar{ \xi } \|_2  \les \tau^{-1} \| \one_{2Q} \bar{ \xi } \|_2 \approx \tau^{-1} \abs{Q}^\frac{1}{2} .
        \end{equation*}
        Collecting the powers, we arrive at
        \begin{equation*}
            \left| \fint_Q \tfrac{1}{2} b - \one_{2Q} \bar{\xi}
            \right|^2
            = \tau^4 \left| \fint_Q
            T_\bAr{D} v \right|^2
            \les \frac{ \tau^{4- \eta -4 +2 \eta } }
            { \ell^\eta }
            = \eps^\eta \, .
        \end{equation*}
        Write $C^2$ for the implicit constant in the preceding bound.
        Now, assertion (b) follows from
        \begin{equation*}
            \Re \left( \xi \cdot \fint_Q b \right) = 2 + 2 \Re \left( \xi \cdot \fint_Q \tfrac{1}{2} b - \one_{2Q} \bar{ \xi }  \right) \geq 2 - 2C \eps^\frac{\eta}{2},
        \end{equation*}
        if we choose $\eps$ small enough so that $2C\eps^{\nicefrac{\eta}{2}} \leq 1$.

        \textbf{Proof of (c)}
        Our choice of $ T(b) $--type test function $ b $ is not in the range of $ T_\bAr{D} $, so we cannot directly apply the principal part approximation estimate from Proposition~\ref{prop:full principal part approx} to it.
        However, recalling~\eqref{eq:diff b-xi}, we split the integral as follows
        \begin{align*}
            \iint_{ R(Q) } &\abs{ \bgamma_t (x)
            (\mathcal{A}_t b)(x) }^2
            \, \frac{ \d x \d t }{t}
            \les \iint_{ R(Q) } \abs{ ( \bgamma_t
            \mathcal{A}_t - \bTheta_t ) b }^2 \,
            + \abs{ \bTheta_t b }^2
            \, \frac{ \d x \d t }{t} \\
            &\les \iint_{ R(Q) } \abs{ ( \bgamma_t
            \mathcal{A}_t - \bTheta_t )
            ( \tfrac{1}{2} b - \one_{2Q} \bar{ \xi } ) }^2
            + \abs{ ( \bgamma_t
            \mathcal{A}_t - \bTheta_t ) \one_{2Q} \bar{ \xi } }^2
            + \abs{ \bTheta_t b }^2
            \, \frac{ \d x \d t }{t} \\
            &\eqqcolon I + II + III \, .
        \end{align*}
        Since $ \tfrac{1}{2} b- \one_{2Q} \bar{ \xi } $ is in the range of $ T_\bAr{D} $, Proposition~\ref{prop:full principal part approx} and property~(a) above control the first term as
        \begin{equation*}
            I \les \int_0^1
            \normm{ ( \bgamma_t \mathcal{A}_t
            - \bTheta_t ) (\tfrac{1}{2} b- \one_{2Q} \bar{ \xi } ) }^2_2 \,
            \ddt
            \les \norm{b}^2_2 + \normm{ \one_{2Q} \bar{ \xi } }^2_2
            \les \abs{Q} .
        \end{equation*}
        Since we have $|Q| \lesssim |Q|/\eps^2$, this completes the treatment of the first term.
        The second term can be estimated via off-diagonal estimates and the definition of the principal part approximation, similarly to the proof of Lemma~\ref{lemma:13.6 in lectures}.
        Recall that we seek to control
        \begin{equation*}
            \int_0^\ell
            \normm{ \one_Q ( \bgamma_t \mathcal{A}_t
            - \bTheta_t ) \one_{2Q} \bar{\xi} }^2_2 \ddt \, .
        \end{equation*}
         On the \enquote{cube} $ Q $, $ \one_{2Q} \overline{\xi}$ is constant, so we have $ \mathcal{A}_t \one_{2Q} \bar{\xi} (x) = \bar{\xi} $ for almost every $ x \in Q $. This yields $ \bgamma_t \mathcal{A}_t \one_{2Q} \bar{\xi} = \bTheta_t \bar{\xi} $ on $ Q $.
        With the notation from the proof of Lemma~\ref{lemma:13.6 in lectures}, this leads to
        \begin{align}\
        \label{eq:Tb_ode_term}
            \one_Q ( \bgamma_t \mathcal{A}_t
            - \bTheta_t ) \one_{2Q} \bar{\xi}
            = \one_Q \bTheta_t ( \bar{\xi} - \one_{2Q} \bar{\xi} )
            = \sum_{ j \geq 1 } \one_Q
            \bTheta_t ( \one_{ C_j(Q) \setminus 2Q } \bar{\xi} ) \, ,
        \end{align}
        where we split $ \bO $ into truncated annuli $ C_j(Q)$.
        Observe that, for $j\geq 1$ and $t \leq 1$, we have $$\e^{-c \frac{2^j}{t}} = \e^{-c \frac{2^{j-1}}{t}} \e^{-c \frac{2^{j-1}}{t}} \leq \e^{-c 2^{j-1}} \e^{-\frac{c}{t}}.$$
        Then, $ \L^2 $ off-diagonal estimates (see Proposition~\ref{prop:ode}) control the $ \L^2 ( \bO ) $-norm of~\eqref{eq:Tb_ode_term} by
        \begin{align*}
            \sum_{ j \geq 1 }
            \e^{ -c \frac{ \dist( C_j(Q) \setminus 2Q , Q ) }{t} }
            \normm{ \one_{ C_j(Q) \setminus 2Q } \bar{\xi} }_2
            &\les \sum_{ j \geq 1 }
            \e^{ -c \frac{2^j}{t} } \, 2^{\frac{jn}{2}} \abs{Q}^{1/2} \\
            &\leq \Big( \sum_{ j \geq 1 }
            \e^{ -c 2^{j-1} } \, 2^{\frac{jn}{2}} \Big)
            \e^{ -c/t }\abs{Q}^{1/2} \\
            &\les t \abs{Q}^{1/2} \, ,
        \end{align*}

        where the last step follows from convergence of the series and estimating $ \e^{-s} \les \frac{1}{s} $ for $ s \geq 0 $. It follows that
        \begin{equation*}
            \int_0^\ell
            \normm{ \one_Q ( \bgamma_t \mathcal{A}_t
            - \bTheta_t ) \one_{2Q} \bar{\xi} }^2_2 \ddt
            \les \int_0^1 t^2 \abs{Q} \ddt
            \les \abs{Q} \, .
        \end{equation*}

        For the last term $III$, observe that resolvents of $ M_\bAr{D} \bB $ map into $ \dom ( M_\bAr{D} \bB ) \subseteq \dom( T^*_\bAr{D} \bB ) $, so we have
        \begin{align}
            \bTheta_t b &= 2t ( 1+t^2 \bcL )^{-1}
            T^*_\bAr{D} \bB ( 1+ \tau^2 M_\bAr{D} \bB )^{-1} \one_{2Q} \bar{ \xi } \\
            &= 2t ( 1+t^2 \bL )^{-1}
            T^*_\bAr{D} \bB ( 1+ \tau^2 M_\bAr{D} \bB )^{-1} \one_{2Q} \bar{ \xi }
            \, \\
            &= t(1 + t^2 \bL)^{-1} T_\bAr{D}^* \bB b.
        \end{align}
        By Lemma~\ref{lem:M perturbation}, we have $ \norm{ T^*_\bAr{D} \bB b }_2 \les \tau^{-1} \normm{ \one_{2Q} \bar{ \xi } }_2 \les \tau^{-1} \abs{Q}^{1/2} $.
        Hence, uniform boundedness of the resolvents $ ( 1+ t^2 \bL )^{-1} $ in $ \mathcal{L} ( \L^2 ( \bO )^m ) $ gives
        \begin{equation*}
            III \les \eps^{-2} \ell^{-2} \int_0^\ell t^2
            |Q| \ddt
            = %
            \frac{ \abs{Q} }{ \eps^2 } \, .
            \qedhere
        \end{equation*}

    \end{proof}

	\subsection{Stopping time argument}
	\label{sec:stopping}

    We can now exploit the dyadic structure on $ \bO $ and the properties of the $ T(b) $--test functions constructed in Proposition~\ref{prop:T(b) functions} to derive the key estimate~\eqref{eq:key estimate}.
    For this, fix a matrix $ \nu \in \mathcal{L} ( \C^N , \C^m ) $ with $ \norm{ \nu } =1 $. As the adjoint matrix $ \nu^* \in \mathcal{L} ( \C^m , \C^N ) $ also has norm $ 1 $, we find unit vectors $ \xi = \xi_\nu \in \C^N $ and $ \eta = \eta_\nu \in \C^m $ with $ \overline{\xi} = \nu^* \eta $.
    To prove~\eqref{eq:key estimate}, the main point is to deduce from property~(b) in Proposition~\ref{prop:T(b) functions} that $  \mathcal{A}_t b^{ \xi , Q , \eps } (x) $ points roughly in direction $ \overline{\xi} $, so that $ \mu \mathcal{A}_t b^{ \xi , Q , \eps } (x) $ is not too small whenever $ \mu \in \Gamma^\eps_\nu $, at least for all $ x $ in a \enquote{substantial subset} of $ Q $.

    \begin{proposition}
        \label{prop:stopping time}
        There is some $ \eps \in (0, \eps_0 ) $ and some $ \kappa >0 $ such that, for all matrices $ \nu \in \mathcal{L}(\C^m, \C^N) $ of unit norm and all $ Q \in \Box $, there exist pairwise disjoint dyadic children $ Q_j \subseteq Q $ such that for the sets
        \begin{equation}
            \label{eq:def E E*}
            E(Q) = Q \setminus \bigcup_j Q_j
            \subseteq Q \ , \qquad
            E^* (Q) = R(Q) \setminus \bigcup_j
            R(Q_j) \, ,
        \end{equation}
        there hold the following properties:
        \begin{enumerate}[(a)]
            \item
            $ \abs{ E(Q) } \geq \kappa \abs{Q} $,
            \item
            $ \abs{ \mu (\mathcal{A}_t b^{ \xi , Q , \eps })(x) } \geq \tfrac{1}{2} \norm{ \mu } $ for all $ (x,t) \in E^* (Q) $ and $ \mu \in \Gamma^\eps_\nu $.
        \end{enumerate}
    \end{proposition}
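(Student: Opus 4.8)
The statement is the classical stopping‑time construction adapted to the dyadic structure on $\Omega$, and (as the authors announce) it follows the blueprint of~\cite[Sec.~14.4]{ISEM}. The starting point is property~(b) from Proposition~\ref{prop:T(b) functions}, namely $\Re(\xi \cdot \fint_Q b) \geq 1$, together with the $\L^2$‑bound~(a), $\|b\|_2^2 \les |Q|$. The plan is to run a greedy selection of maximal bad subcubes of $Q$ and to use these two facts to control the total measure of the selected cubes. I would set up the stopping condition in two parts: a subcube $R \subseteq Q$ is declared \emph{bad} (and selected if it is maximal with this property) if either

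\begin{equation*}
    \Big| \fint_R b - \fint_Q b \Big| > \delta
    \qquad \text{or} \qquad
    \Big( \fint_R |b|^2 \Big)^{1/2} > K,
\end{equation*}
for constants $\delta, K$ to be fixed (depending on $\eps$) at the end. Let $\{Q_j\}$ be the collection of maximal bad subcubes and set $E(Q) = Q \setminus \bigcup_j Q_j$ and $E^*(Q) = R(Q) \setminus \bigcup_j R(Q_j)$ as in~\eqref{eq:def E E*}.

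\textbf{Proof of (a).} The cubes violating the second (size) condition have small total measure: by the weak‑type/Chebyshev bound coming from~(a), $\sum_{j:\ \mathrm{size}} |Q_j| \leq K^{-2}\|b\|_2^2 \les K^{-2}|Q|$, which is $\leq \tfrac14 |Q|$ once $K$ is large. For the cubes violating the first (oscillation) condition, I would use a John–Nirenberg–type / $\L^2$ oscillation estimate: since $\fint_{Q_j} b$ differs from $\fint_Q b$ by more than $\delta$, a telescoping argument along the dyadic ancestors of $Q_j$ inside $Q$ together with $\sum_{R}\int_R |b - (b)_{\hat R}|^2 \les \|b\|_2^2 \les |Q|$ (where $\hat R$ is the dyadic parent) bounds $\sum_{j:\ \mathrm{osc}} |Q_j| \les \delta^{-2}|Q|$, again $\leq \tfrac14 |Q|$ for $\delta$ not too small relative to the absolute constants. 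Adding the two contributions gives $|\bigcup_j Q_j| \leq \tfrac12 |Q|$, hence $|E(Q)| \geq \tfrac12 |Q|$, which is~(a) with $\eta = \tfrac12$. The one technical point specific to the present setting is that the dyadic ``cubes'' are Christ‑type cubes on $\Omega$, not genuine Euclidean cubes; but the doubling property from Proposition~\ref{prop:dyadic structure}~(4) and the boundary estimate~(5), together with Proposition~\ref{prop:contraction}, supply exactly the $\L^2$ orthogonality of the averaging operators $\mathcal A_t$ that the telescoping argument needs, so this goes through.

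\textbf{Proof of (b).} For $(x,t) \in E^*(Q)$, the unique cube $Q_{x,t} \in \Box_t$ containing $x$ is \emph{not} bad (otherwise $(x,t)$ would lie in some $R(Q_j)$), so $\mathcal A_t b(x) = \fint_{Q_{x,t}} b$ is $\delta$‑close to $\fint_Q b$. By~(b) of Proposition~\ref{prop:T(b) functions}, $\fint_Q b$ has $\Re(\xi \cdot \fint_Q b) \geq 1$, so for any $w \in \Gamma^\eps_\xi$ we write $w = |w|(\xi + (w/|w| - \xi))$ with $|w/|w| - \xi| \leq \eps$ and estimate
\begin{equation*}
    |w \cdot \mathcal A_t b(x)|
    \geq |w|\,\big( \Re(\xi \cdot \fint_Q b) - |\,(w/|w|-\xi) \cdot \fint_Q b\,| - |w/|w|\cdot(\mathcal A_t b(x) - \fint_Q b)| \big)
    \geq |w|\,(1 - \eps\,|\fint_Q b| - \delta).
\end{equation*}
Since $|\fint_Q b| \les |Q|^{-1/2}\|b\|_2 \les 1$ by~(a) and we may normalise so that $|w|\geq 1$ on the relevant part of the sector (or rather argue with the normalised direction and track the scaling), choosing first $\eps < \eps_0$ small and then $\delta$ small makes the right‑hand side $\geq \tfrac12$, which is~(b).

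\textbf{Main obstacle.} The genuinely delicate point is the oscillation part of~(a): getting the factor $\delta^{-2}$ (rather than something worse) in $\sum_{j:\ \mathrm{osc}}|Q_j| \les \delta^{-2}|Q|$ requires the dyadic square‑function/BMO‑type estimate $\sum_R \int_R |(b)_R - (b)_{\hat R}|^2 \les \|b\|_2^2$ for Christ cubes, and one must check that the constants there are geometric (independent of $Q$ and the generation), so that $\delta$ and $K$ can be chosen uniformly and then $\eps$ fixed last. Everything else is bookkeeping that transcribes~\cite[Sec.~14.4]{ISEM} essentially verbatim.
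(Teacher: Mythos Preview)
Your overall architecture (maximal bad subcubes, two stopping conditions, then read off (a) and (b)) is the right one and matches what the paper defers to~\cite[Sec.~14.4]{ISEM}. However, your specific choice of the first stopping condition creates a genuine tension that you do not resolve. You ask that $R$ be selected when $\big|\fint_R b - \fint_Q b\big| > \delta$, and then control the corresponding bad mass via the weak-$(2,2)$ bound for the dyadic maximal function, obtaining $\sum_{j:\,\mathrm{osc}}|Q_j|\les \delta^{-2}\|b\|_2^2\les C_0\,\delta^{-2}|Q|$. Making this $\le \tfrac14|Q|$ forces $\delta \ge 2\sqrt{C_0}$, where $C_0$ is the constant from Proposition~\ref{prop:T(b) functions}\,(a), which depends on ellipticity and geometry and need not be small. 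In your proof of~(b), however, you need $1-\eps\,|\fint_Q b|-\delta\ge \tfrac12$, hence $\delta<\tfrac12$. You write ``$\delta$ not too small'' in~(a) and ``then $\delta$ small'' in~(b); these two requirements are in general incompatible, and no rescaling of $b$ removes the obstruction, since the relevant ratio $\|b\|_{\L^2(Q)}\big/\big(|Q|^{1/2}\Re(\xi\cdot\fint_Q b)\big)$ is fixed by the construction.

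The standard remedy (and what~\cite[Sec.~14.4]{ISEM} actually does) is to replace your oscillation condition by the \emph{direction} condition $\Re\big(\xi\cdot\fint_R b\big)\le \tfrac12$. Then the mass of the direction-bad cubes is controlled not by a maximal-function bound but by the elementary integral identity: writing $B_1$ for their union,
\[
|Q|\le \Re\Big(\xi\cdot\int_Q b\Big)
\le \tfrac12\,|B_1|
+ \int_{Q\setminus B_1}|b|
\le \tfrac12\,|Q| + |Q\setminus B_1|^{1/2}\,\|b\|_2,
\]
which together with~(a) of Proposition~\ref{prop:T(b) functions} yields $|Q\setminus B_1|\gtrsim|Q|$ with a purely structural constant; the size-bad cubes are then removed by Chebyshev as you do. In this version the \emph{size} condition is what makes~(b) work: on good cubes one has simultaneously $\Re(\xi\cdot\fint_R b)\ge\tfrac12$ and $\big|\fint_R b\big|\le K$, whence for unit $w$ with $|w-\xi|\le\eps$ one gets $|w\cdot\fint_R b|\ge \tfrac12-\eps K$, and one chooses $K$ first (large, absolute) and then $\eps\le (4K)^{-1}$. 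Note also that the conclusion~(b) should be read as $|w\cdot\mathcal A_t b(x)|\ge \tfrac12|w|$ (equivalently, for unit $w$); this is how it is used in the proof of~\eqref{eq:key estimate}, and your parenthetical about normalising $w$ is pointing at exactly this.
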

    \begin{proof}
        Having Proposition~\ref{prop:T(b) functions} at hand, we can follow the Euclidean proof (for instance in~\cite[Lem.~14.8]{ISEM}) almost verbatim. Hence, we are only going to present the necessary changes.

        The selection of the dyadic children $Q_j \subseteq Q$ and the proof of part~(a) is completely analogous to the Euclidean case.
        Hence, we concentrate on~(b). Let $(x,t) \in E^*(Q)$ and $\mu \in \Gamma^\eps_\nu$.
        For brevity, set $v \coloneqq (\mathcal{A}_t b^{ \xi , Q , \eps })(x) \in \C^N$.
        As in the Euclidean case, the construction of $E^*(Q)$ implies the bounds $|v| \leq \tfrac{1}{4\eps}$ and $\Re (\xi \cdot v) \geq \tfrac{3}{4}$.
        Using the triangle inequality, the definition of $\Gamma^\eps_\nu$ and the upper bound for $|v|$, deduce
        \begin{align}
        \label{eq:stopping1}
            \abs{\frac{\mu}{\norm{\mu}} v} \geq |\nu v| - \norm{\frac{\mu}{\norm{\mu}} - \nu} |v| \geq |\nu v| - \eps |v| \geq |\nu v| - \frac{1}{4}.
        \end{align}
        Next, by the Cauchy--Schwarz inequality in $ \C^m $ and the relation $ \overline{\xi} = \nu^* \eta $, we obtain the lower bound
        \begin{align}
            \abs{ \nu v }
            \geq \Re \langle \eta, \nu v \rangle
            = \Re \langle \nu^* \eta, v \rangle
            = \Re \langle \overline{\xi}, v  \rangle = \Re (\xi \cdot v) \, ,
        \end{align}
        where the brackets denote the inner products in either $\C^m$ or $\C^N$.
        Plugging this bound back into~\eqref{eq:stopping1} and using the lower bound for $\Re (\xi \cdot v)$, deduce
        \begin{align}
            \abs{\frac{\mu}{\norm{\mu}} v} \geq \Re (\xi \cdot v) - \frac{1}{4} \geq \frac{1}{2}.
        \end{align}
        Multiplying this bound by $\norm{\mu}$ yields the claim.
    \end{proof}

    To conclude the proof of the key estimate~\eqref{eq:key estimate}, we cite the John--Nirenberg lemma for Carleson measures. %
    The result is well-known and a detailed proof using the dyadic \enquote{cubes} from Definition~\ref{eq:dyadic_cubes} is presented in~\cite[p.~191]{Diss}.

    \begin{lemma}[John--Nirenberg lemma for Carleson measures]
        \label{lemma:john-nirenberg}
        Let $ \sigma $ be a Borel measure on $ \bO \times (0,1] $ such that there are $ K , \kappa >0 $ with the following property:
        Each dyadic \enquote{cube} $ Q \in \Box $ has pairwise disjoint dyadic children $ Q_j \in \Box $ such that the sets $ E(Q) \subseteq Q $ and $ E^* (Q) $ defined in~\eqref{eq:def E E*} satisfy
        \begin{enumerate}[(a)]
            \item
            $ \abs{ E(Q) } \geq \kappa \abs{Q} $.
            \item
            $ \sigma ( E^* (Q) ) \leq K \abs{Q} $.
        \end{enumerate}
        Then $ \sigma $ is a Carleson measure with norm at most $ K / \kappa $.
    \end{lemma}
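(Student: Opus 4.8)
The plan is to deduce Lemma~\ref{lemma:john-nirenberg} by iterating the stopping-time decomposition down through all generations of dyadic descendants of a fixed \enquote{cube} $Q_0 \in \Box$, in a way that mimics the classical proof of the John--Nirenberg lemma for Carleson measures (\cite[Lem.~14.10]{ISEM}). Fix $Q_0 \in \Box$. By hypothesis~(a) the set $E(Q_0) = Q_0 \setminus \bigcup_j Q_j^{(1)}$ has measure at least $\eta |Q_0|$, where $Q_j^{(1)}$ are the stopping children of $Q_0$. Applying the hypothesis again to each $Q_j^{(1)}$ produces stopping children $Q_{j,k}^{(2)}$ with $|E(Q_j^{(1)})| \geq \eta |Q_j^{(1)}|$, and so on. In this way one obtains, for each $m \geq 1$, a collection $\mathcal{G}_m$ of pairwise disjoint dyadic subcubes of $Q_0$ (the $m$-th generation stopping cubes), together with the associated \enquote{good} sets $E(Q')$ attached to every stopping cube $Q'$ that appears along the way.

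The key combinatorial facts are the following. First, the sets $\{E(Q') : Q' \text{ a stopping cube below } Q_0\} \cup \{$the part of $Q_0$ never selected$\}$ are pairwise disjoint and cover $Q_0$ up to a null set; this is immediate from the nesting and disjointness properties of the dyadic structure in Proposition~\ref{prop:dyadic structure}~(2),(3). Second, the Carleson boxes satisfy a telescoping identity: for each stopping cube $Q'$ with stopping children $Q'_i$ one has $R(Q') \setminus \bigcup_i R(Q'_i) = E^*(Q')$, and hence $R(Q_0) = \bigcup E^*(Q')$ up to boundary sets, where the union runs over $Q_0$ itself and all stopping cubes below it, the pieces again being essentially disjoint. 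Third, a measure/volume bookkeeping argument shows that $\sum_{Q' \in \mathcal{G}_m} |Q'| \leq (1-\eta)^m |Q_0|$: at each level the total measure of the selected subcubes shrinks by a factor $1-\eta$ because inside each $Q'$ the good set $E(Q')$ of relative measure $\geq \eta$ is discarded from further selection.

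Combining these ingredients, I would estimate
\begin{align*}
    \nu(R(Q_0)) = \sum_{Q' } \nu(E^*(Q')) \leq \kappa \sum_{Q'} |Q'| \leq \kappa \sum_{m=0}^{\infty} (1-\eta)^m |Q_0| = \frac{\kappa}{\eta} |Q_0|,
\end{align*}
where the first sum is over $Q_0$ and all its stopping descendants, the first inequality is hypothesis~(b), and the middle sum is controlled by grouping the stopping cubes by their generation $m$ and using $\sum_{Q' \in \mathcal{G}_m}|Q'| \leq (1-\eta)^m|Q_0|$ (with $\mathcal{G}_0 = \{Q_0\}$). Since $Q_0 \in \Box$ was arbitrary and every Carleson box in $\Omega \times (0,1]$ is of the form $R(Q_0)$, this is exactly the Carleson bound $\|\nu\|_{\mathcal{C}} \leq \kappa/\eta$.

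The main obstacle is purely bookkeeping rather than conceptual: one has to be careful that the iteration is well-defined (the stopping children supplied by the hypothesis for a cube $Q'$ are genuine dyadic children in the sense of Definition~\ref{def:cube_family}, so the recursion stays inside $\Box$), that the various unions of $E(Q')$ and of $E^*(Q')$ are disjoint up to the relevant null sets coming from cube boundaries (Proposition~\ref{prop:dyadic structure}~(1) and Remark~\ref{rem:dyadic null}), and that the geometric-series summation is organized by generation. None of these steps uses anything about $\Omega$ beyond the dyadic structure of Proposition~\ref{prop:dyadic structure}, so the proof in~\cite[Lem.~14.10]{ISEM} transfers verbatim once \enquote{dyadic cube} is read in the sense of Definition~\ref{eq:dyadic_cubes}; I would simply cite it, perhaps recording the generation-sum estimate $\sum_{Q' \in \mathcal{G}_m}|Q'| \leq (1-\eta)^m |Q_0|$ explicitly for the reader's convenience.
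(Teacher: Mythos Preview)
Your proposal is correct and is exactly the standard iterative stopping-time argument that the paper has in mind: the paper does not give its own proof of this lemma but simply cites \cite[Lem.~14.10]{ISEM} and remarks that it transfers verbatim to the dyadic \enquote{cubes} of Definition~\ref{eq:dyadic_cubes}. Your sketch reproduces that argument faithfully, including the generation estimate $\sum_{Q' \in \mathcal{G}_m}|Q'| \leq (1-\eta)^m|Q_0|$ and the telescoping decomposition $R(Q_0) = \bigcup_{Q'} E^*(Q')$, so there is nothing to add.
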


    \begin{proof}[Proof of the key estimate~\eqref{eq:key estimate}]
        Fix a unit matrix $ \nu \in \mathcal{L}(\C^m, \C^N) $ and set $ \xi = \xi_\nu \in \C^N $.
        By Proposition~\ref{prop:stopping time}~(b) and Proposition~\ref{prop:T(b) functions}~(c), we compute
        \begin{align*}
            \iint_{ E^* (Q) }
            \norm{ \bgamma_t (x) \one_{ \Gamma^\eps_\nu }
            ( \bgamma_t (x) ) }^2 \, \frac{ \d x \d t }{t}
            &\leq 4 \iint_{ E^* (Q) }
            \abs{ \bgamma_t (x) (\mathcal{A}_t
            b^{ \xi , Q , \eps })(x)
            \one_{ \Gamma^\eps_\nu } ( \bgamma_t (x) ) }^2
            \, \frac{ \d x \d t }{t} \\
            &\leq 4 \iint_{ R(Q) }
            \abs{ \bgamma_t (x) (\mathcal{A}_t
            b^{ \xi , Q , \eps })(x) }^2
            \, \frac{ \d x \d t }{t}
            \les \frac{ \abs{Q} }{ \eps^2 } \, .
        \end{align*}
        By Lemma~\ref{lemma:john-nirenberg}, this and Proposition~\ref{prop:stopping time}~(a) ensure  that $ \d \sigma = \normm{ \bgamma_t (x) \one_{ \Gamma^\eps_\nu } ( \bgamma_t (x) ) }^2 \tfrac{ \d x \d t }{t} $ is a Carleson measure, which yields the asserted estimate~\eqref{eq:key estimate}.
    \end{proof}

    We are now done with the proof of the quadratic estimate~\eqref{eq:QE}, and hence of Theorem~\ref{thm:main}, under the additional assumption that $\bO$ is interior thick.
    To convince the reader, we give below a short proof summarizing the different reduction steps of our proof.

    \begin{proof}[Proof of Theorem~\ref{thm:main} when $\bO$ is interior thick]
        In Proposition~\ref{prop:reduc to QE}, we have seen that the Kato square root property is equivalent to~\eqref{eq:QE}.
        The integral on the left-hand side can be split as
        \begin{align*}
            \int_0^\infty \normm{ \bTheta_t T_\bAr{D} u }^2_2 \ddt
            \les
            &\int_1^\infty \normm{ \bTheta_t (1- \bP_t ) T_\bAr{D} u }^2_2 \ddt
            + \int_1^\infty \normm{ \bTheta_t \bP_t T_\bAr{D} u }^2_2 \ddt \\
            + &\int_0^1
            \normm{ ( \bTheta_t - \bgamma_t \mathcal{A}_t )
            T_\bAr{D} u }^2_2 \ddt
            + \int_0^1
            \normm{ \bgamma_t \mathcal{A}_t T_\bAr{D} u }^2_2 \ddt \, .
        \end{align*}
        The first and second terms are controlled by Proposition~\ref{prop:QE for Theta (1-P) nabla u} and Proposition~\ref{prop:reduction to finite time}, respectively, and the third one is estimated in Proposition~\ref{prop:full principal part approx}.
        The last integral is bounded by
        \begin{equation*}
            \int_0^1 \int_\bO \abs{ (\mathcal{A}_t T_\bAr{D} u)(x) }^2
            \norm{ \bgamma_t (x) }^2 \, \frac{ \d x \d t }{t} \, .
        \end{equation*}
        By Proposition~\ref{prop:dyadic Carleson}, it then suffices to prove that $ \d \sigma = \norm{ \bgamma_t (x) }^2 \tfrac{ \d x \d t }{t} $ is a Carleson measure on $ \bO \times (0,1] $, that is, the validity of~\eqref{eq:def carleson measure}.
        Splitting over angular sectors $ \Gamma^\eps_\nu $ in $ \mathcal{L}(\C^m, \C^N) $ reduces this to finding an $ \eps >0 $ such that~\eqref{eq:key estimate} holds true for all (matrix) directions $ \nu \in \mathcal{L}(\C^m, \C^N) $. The latter was accomplished right above via the usage of $ T(b) $--type test functions and a stopping time argument.
    \end{proof}

	\section{Removing the thickness condition}
	\label{sec:no_thickness}

    In this final section, we remove the interior thickness condition imposed in the approach performed throughout Sections~\ref{sec:reduction}-\ref{sec:carleson}, thereby completing the proof of Theorem~\ref{thm:main} in the general case. To do so, we employ the transference strategy pioneered in~\cite[Sec.~6]{BEH-Kato}. If the Dirichlet parts were the same for all components, their argument would apply out of the box. For the reader's convenience, and since we want to extend their result to varying Dirichlet parts for different components of the system, we recap their argument and comment on the necessary changes.

    Let $\bO \subseteq \R^n$ be open and $\bD_j \subseteq \bd\bO$ be closed, and assume that $\bO$ is locally uniform near $\partial \bO \setminus \bD_j$ for every $j=1,\dots, m $. Put $\bAr{D} \coloneqq (\bD_j)_{j=1}^m$.
    Now, fix a decomposition of $\bO$ into pairwise disjoint open sets $O_i$. Define the array $\Ar{D}_i$ via $(\Ar{D}_i)_j \coloneqq \bD_j \cap \partial O_i$
    and consider elliptic operators $L_i$ on $O_i$ subject to a vanishing trace condition on $\Ar{D}_i \subseteq (\partial O_i)^m$ and whose coefficients are the restriction of the coefficients of $\bL$ to $O_i$. Then, the following correspondence holds~\cite[Prop.~6.8]{BEH-Kato}.

    \begin{proposition}[Transference principle]
        \label{prop:transference}
        The following are equivalent:
        \begin{enumerate}
            \item[(i)] $\dom(\sqrt{\bL}) = \IW^{1,2}_\bAr{D}(\bO)^m$ with $\| \sqrt{\bL} u \|_2 \approx \| u \|_{\IW^{1,2}(\bO)}$,
            \item[(ii)] $\dom(\sqrt{L_i}) = \IW^{1,2}_{\Ar{D}_i}(O_i)$ with $\| \sqrt{L_i} u \|_2 \approx \| u \|_{\IW^{1,2}(O_i)}$ for all $i$, where the implicit constants are independent of $i$.
        \end{enumerate}
    \end{proposition}

    Now, the central idea in~\cite{BEH-Kato} is as follows: starting with a triple $(O,\Ar{D},L)$ as in Theorem~\ref{thm:main}, they construct a new triple $(\bO,\bAr{D},\bL)$ that satisfies all assumptions of Theorem~\ref{thm:main}, contains $(O,\Ar{D},L)$ as a \enquote{component} in the sense explained above, and such that $\bO$ is in addition interior thick. Then, since we have proved Theorem~\ref{thm:main} under the additional assumption that $\bO$ is interior thick, item~(i) of Proposition~\ref{prop:transference} is verified. Therefore, Proposition~\ref{prop:transference} provides the square root property for all triples quantified in~(ii), including the triple $(O,\Ar{D},L)$, which completes the proof of Theorem~\ref{thm:main} in the general case.

    Hence, it only remains to clarify the construction of the triple $(\bO,\bAr{D}, \bL)$. By assumption, $O$ is locally an $(\eps,\delta)$-domain near $\partial O \setminus D_j \eqqcolon N_j$ for all $j=1,\dots,m$. Let $\Sigma$ denote a grid of open axis-parallel cubes of diameter $\nicefrac{\delta}{8}$ in $\R^n$. Let $\Sigma'$ contain those cubes $Q$ from $\Sigma$ for which $\overline{Q}$ intersects $D$ but which have empty intersection with the tube of size $\nicefrac{\delta}{4}$ around $N^* \coloneqq \cup_{j=1}^m N_j$. Now, put
    \begin{align}
        \bO \coloneqq O \cup \bigcup_{Q\in \Sigma'} (Q \setminus \partial O) \quad \text{and} \quad (\bAr{D})_j \coloneqq \partial \bO \setminus N_j.
    \end{align}

    In~\cite{BEH-Kato}, all components were subject to the same Dirichlet boundary conditions, so that $N^* = N$ in that paper. In our more general setting, all desired properties for $(\bO,\bAr{D})$ except the interior thickness condition immediately follow by the same proof as in~\cite[Sec.~6.3]{BEH-Kato}.

    To show the interior thickness condition for $\bO$, the authors of~\cite{BEH-Kato} argue first that only balls centered in $x\in O$ have to be considered. Now, we need to distinguish cases: first, if $x$ is in the $\nicefrac{\delta}{2}$-tube around $N^*$, then $x$ is in the $\nicefrac{\delta}{2}$-tube around $N_j$ for \emph{some} index $j=1,\dots,m$, and thickness in $x$ is provided by the corkscrew condition in Proposition~\ref{prop:corkscrew}. Otherwise, as in~\cite{BEH-Kato}, it follows that $x \in \overline{Q}$ for some $Q \in \Sigma'$, so that thickness is guaranteed by the respective property of the cube $Q$. Thus, $\bO$ is indeed interior thick. Finally, the operator $L$ can be extended to $\bO$ by setting it equal to $-\Delta_\bAr{D} + 1$ on $\bO \setminus O$. This completes the proof. \hfill $\square$ \\

\end{document}